\newtheorem{thm}{Theorem}
\newtheorem{lem}[thm]{Lemma}
\newtheorem{prop}[thm]{Proposition}
\newtheorem{defn}[thm]{Definition}
\newtheorem{cor}[thm]{Corollary}
\newtheorem{rem}[thm]{Remark}
\newtheorem{notations}[thm]{Notation}
\newtheorem{exa}[thm]{Example}
\begin{document}

\newfont{\goth}{eufm10 scaled \magstep1}
\def\ga{\mbox{\goth a}}
\def\gp{\mbox{\goth p}}
\def\gc{\mbox{\goth c}}
\def\gg{\mbox{\goth g}}
\def\ge{\mbox{\goth e}}
\def\gh{{\mbox{\goth h}}}
\def\gk{\mbox{\goth k}}
\def\gl{\mbox{\goth l}}
\def\gf{\mbox{\goth f}}
\def\gm{\mbox{\goth m}}
\def\gn{\mbox{\goth n}}
\def\gq{\mbox{\goth q}}
\def\gr{\mbox{\goth r}}
\def\gs{\mbox{\goth s}}
\def\gt{\mbox{\goth t}}
\def\gu{\mbox{\goth u}}
\def\gw{\mbox{\goth w}}
\def\gsl{\mbox{\goth sl}}
\def\gsp{\mbox{\goth sp}}
\def\r{\mbox{\goth r}}
\def\gso{\mbox{\goth so}}
\def\gsu{\mbox{\goth su}}
\newcommand\Id{\mathrm{Id}}
\newcommand\Ad{\mathrm{Ad}}
\newcommand\ad{\mathrm{ad}}

\author{Liana David}

\title{On cotangent manifolds, complex structures and generalized
geometry}

\maketitle

\textbf{Abstract:} We develop various properties of  symmetric
generalized complex structures (in connection with their
holomorphic space and $B$-field transformations), which are
analogous to the well-known results of Gualtieri \cite{gualtieri}
on skew-symmetric generalized complex structures. Given a
symmetric or skew-symmetric generalized complex structure
$\mathcal J$ and a linear connection $D$ on a manifold $M$, we construct
an almost complex structure $J^{\mathcal J , D}$ on the cotangent
manifold $T^{*}M$ and  we study its integrability.  For $\mathcal
J$ skew-symmetric, we relate the Courant integrability of
$\mathcal J$ with the integrability of $J^{\mathcal J , D}$. 
We consider in detail the case when $M=G$ is a Lie group and $\mathcal J$, $D$ are 
left-invariant. We also
show that our approach unifies and generalizes various results
from special complex geometry.\footnote{ Mathematics Subject
Classification 2010: Primary 53C15, Secondary 53D18, 53C55.

Key words:  complex and generalized complex structures,
holomorphic bundles, integrability, Lie groups, special complex
structures.}

\section{Introduction}

{\bf Motivation.} The starting point of this note is a result proved in \cite{alek},
which states that the cotangent manifold of a special symplectic
manifold $(M, J, \nabla , \omega )$ inherits, under some
additional conditions, a hyper-K\"{a}hler structure. Recall that a
manifold $M$ with a complex structure $J$, a flat, torsion-free connection
$\nabla$ and a symplectic form $\omega$ is special symplectic if
$d^{\nabla}J=0$ (i.e. $\nabla_{X}(J)(Y) = \nabla_{Y}(J)(X)$, for
any $X, Y\in TM$) and $\nabla \omega =0.$ The connection $\nabla$,
acting on the cotangent bundle $\pi : T^{*}M\rightarrow M$,
induces a decomposition
\begin{equation}\label{deco}
T(T^{*}M) = H^{\nabla}\oplus\pi^{*}T^{*}M= \pi^{*}(TM\oplus
T^{*}M)
\end{equation}
into horizontal and vertical subbundles. Assume now that the
$(1,1)$-part of $\omega$ (with respect to $J$) is non-degenerate
and satisfies $\nabla \omega^{1,1}=0$. Under these additional
conditions, the hyper-K\"{a}hler structure on $T^{*}M$ mentioned
above is given, by means of (\ref{deco}), by (the pull-back of)
$$
J_{1}:= \left(
\begin{tabular}{cc}
$J$ & $0$\\
$0$ & $J^{*}$\\
\end{tabular}\right) , \quad J_{2}:=
\left(\begin{tabular}{cc}
$0$ & $-(\omega^{1,1})^{-1}$\\
$\omega^{1,1}$ & $0$\\
\end{tabular}\right), \quad
g:=\left(
\begin{tabular}{cc}
$g^{1,1}$ & $0$\\
$0$ & $(g^{1,1})^{-1}$\\
\end{tabular}\right) ,
$$
where $g^{1,1}:=\omega^{1,1}(J\cdot , \cdot ).$ A key fact in the
proof that $(J_{1}, J_{2}, g)$ is hyper-K\"{a}hler is the
integrability of $J_{1}$ and $J_{2}.$ The integrability of $J_{2}$
follows from a local argument, which uses $\nabla$-flat
coordinates and $\nabla \omega^{1,1}=0$. For the integrability of
$J_{1}$, one notices, using the special complex condition
$d^{\nabla}J=0$, that $H^{\nabla} \subset T(T^{*}M)$ is invariant
with respect to the canonical complex structure $J_{\mathrm{can}}$
of $T^{*}{M}$ induced by $J$. Hence, $J_{1}$ coincides with
$J_{\mathrm{can}}$ and is integrable. These arguments were developed
in \cite{alek}.

With special geometry as a motivation, in this note we consider
the following setting: a manifold $M$ with a linear connection $D$
and a smooth field of endomorphisms $\mathcal J$ of the
generalized tangent bundle $\mathbb{T}M:= TM\oplus T^{*}M$, such
that $\mathcal J^{2} = -\mathrm{Id}$. Following \cite{nan} (rather
than the usual terminology from generalized geometry), we call
$\mathcal J$ a generalized complex structure. Motivated by $J_{1}$
and $J_{2}$ above, we assume that $\mathcal J$ is symmetric or
skew-symmetric with respect to the canonical metric of neutral
signature of $\mathbb{T}M$. From $D$ and $\mathcal J$ we construct
an almost complex structure $J^{\mathcal J , D}$ on the cotangent
manifold $T^{*}M$ and we study its integrability. This provides a
new insight, from the generalized complex geometry point of view,
on the above arguments from \cite{alek}. Along the way, we prove
various properties we need on symmetric generalized complex
structures. The relation with the Courant integrability is also
discussed. As a main application, we construct a large class of
complex structures on cotangent manifolds of real semisimple Lie
groups.

In the remaining part of the introduction we describe
in detail the results and the structure of the paper.\\

{\bf Structure of the paper.} In Section \ref{basic} we prove
basic facts we need from generalized geometry. While
skew-symmetric generalized complex structures are well-known (see
e.g. \cite{gualtieri} for basic facts), the symmetric ones do not
seem to appear in the literature. We begin by studying symmetric
generalized complex structures on (real) vector spaces. We find
the general form of their holomorphic space (see Proposition
\ref{holo}) and we show that any symmetric generalized complex
structure on a vector space is, modulo a $B$-field transformation,
the direct sum of one determined by a complex structure and
another determined by a pseudo-Euclidian metric (see Example
\ref{example} and Theorem \ref{b-field}). Therefore, there is an
obvious analogy with the theory of skew-symmetric generalized
complex structures developed by Gualtieri in \cite{gualtieri}. We
discuss this analogy in Subsection \ref{analogies}. For our
purposes it is particularly relevant the common description of the
holomorphic space $L^{\tau}(E, \alpha )$ of a symmetric or,
respectively, skew-symmetric generalized complex structure on a
vector space $V$, in terms of a complex subspace $E \subset
V^{\mathbb{C}}$ and a skew-Hermitian, respectively skew-symmetric
$2$-form $\alpha$ on $E$, satisfying some additional conditions
(see Corollary \ref{common}). These results extend pointwise to
manifolds (see Subsection \ref{integr}). Despite the above
analogies, there is an important difference between symmetric and
skew-symmetric generalized complex structures on manifolds: unlike
the skew-symmetric ones, the symmetric generalized complex
structures are never Courant integrable (see Lemma
\ref{non-integrable}).

In Section \ref{main-section} we consider a manifold $M$ together
with a connection $D$ and a symmetric or skew-symmetric
generalized complex structure $\mathcal J .$ Using $D$ and
$\mathcal J$ we define an almost complex structure $J^{\mathcal J
, D}$ on $T^{*}M$ (see Definition \ref{a-c}) and we discuss its
integrability. It turns out that the integrability of $J^{\mathcal
J , D}$ imposes obstructions on the curvature of $D$ and the data
$(E, \alpha )$ which defines the holomorphic bundle $L=
L^{\tau}(E, \alpha )$ of $\mathcal J .$ In particular, the complex
subbundle $E\subset T^{\mathbb{C}}M$ must be involutive and
$\alpha$ must satisfy a differential equation involving $D$ (see
Theorem \ref{main-thm}). As a straightforward application of
Theorem \ref{main-thm},  we relate the Courant integrability of a
skew-symmetric generalized complex structure $\mathcal J$,  with the
integrability of $J^{\mathcal J, D}$ (see Corollary
\ref{remarca}).  In particular, we deduce that a
left-invariant, skew-symmetric, generalized complex structure $\mathcal J$ on a Lie
group $G$ is Courant integrable, if and only if  the almost
complex structure $J^{\mathcal J , D^{c}}$ on $T^{*}G$ is
integrable, where $D^{c}$ is the left-invariant connection which
on left-invariant vector fields is the Lie bracket (see Example
\ref{Courant-Lie}). A systematic description of 
Courant integrable, left-invariant, skew-symmetric generalized complex structures on real semisimple
Lie groups was developed in \cite{liana}. This is the motivation for our treatment from the next section.

Section \ref{complex} is  devoted to applications of Theorem
\ref{main-thm} to Lie groups. Our main goal here is to describe a
large class of left-invariant symmetric (rather than skew-symmetric) generalized
complex structures $\mathcal J$  on a semisimple Lie group, which, together with a suitably
chosen left-invariant connection $D^{0}$, 
determine an integrable complex structure  $J^{\mathcal J , D^{0}}$ on the cotangent group
 (The connection $D^{0}$ plays the role of $D^{c}$ above). In the first part of Section
\ref{complex}, intended to fix notations, we briefly recall the
basic facts we need on the structure theory of semisimple Lie algebras. We follow closely
\cite{knapp}, Chapter VI. In Subsection \ref{semi1} we develop an
infinitesimal description, in terms of the so-called admissible
triples $(\gk , \mathcal D ,\epsilon )$, of pairs $(\mathcal J ,
D)$ formed by a left-invariant (symmetric or skew-symmetric)
generalized  complex structure $\mathcal J$ and a left-invariant
connection $D$ on a (not necessarily semisimple) Lie group $G$, such that the associated almost
complex structure $J^{\mathcal J , D}$ on $T^{*}G$ is integrable
(see Definition \ref{adm} and Proposition \ref{cons}). In this
description, the pair $(\gk , \epsilon )$ defines the fiber
$L^{\tau}(\gk , \epsilon )$ at $e\in G$ of the holomorphic bundle
of $\mathcal J$ and $\mathcal D $ is the restriction of $D$ to the
space of left-invariant vector fields. The notion of admissible
triple generalizes the notion of admissible pair, defined in
\cite{liana} to encode the Courant integrability of left-invariant
skew-symmetric generalized complex structures on Lie groups.  When
$G$ is semisimple, we define the notion of regularity for the
structures involved (see Definition \ref{adm}); in the above
notation, this means that $\gk$ is a regular subalgebra of
$\gg^{\mathbb{C}}$, normalized by a maximally compact Cartan subalgebra of $\gg$. The preferred connection
$D^{0}$ is introduced in Definition \ref{def-d-0}  and our motivation
for its choice is explained before Lemma \ref{connection}. Our main result in
this section is Theorem \ref{main-applic}, which provides a
description (in terms of admissible triples) of regular symmetric
generalized complex structures $\mathcal J$ on $G$, which,
together with the connection $D^{0}$, determine an (integrable)
complex structure on $T^{*}G$. The description from Theorem
\ref{main-applic} requires further clarifications: one needs to
construct the constants $\{ \nu_{\alpha}, \, \alpha \in
R_{0}^{\mathrm{sym}}\}$, which are subject to conditions
(\ref{ad-nu}), (\ref{suplimentara}) and to study the
non-degeneracy of the (symmetric) bilinear form $g_{\Delta}$. A
method to construct the $\nu_{\alpha}$'s is provided by Lemma
\ref{nu}. When the root system $R_{0}$ of the regular subalgebra
$\gk$ is not only $\sigma$-parabolic, as required by Theorem
\ref{main-applic}, but $\sigma$-positive (see Definition \ref{def-sigma}),
the non-degeneracy of
$g_{\Delta}$ is straightforward (see Remark \ref{outer}) and we
obtain, on any semisimple Lie group $G$, a large class of regular
symmetric generalized complex structures $\mathcal J$, such that
$J^{\mathcal J , D^{0}}$ is integrable. In the special case when
$G$ is of inner type, the root system $R_{0}$ of $\gk$ is always a
positive root system and we obtain a full explicit description of
all regular symmetric generalized complex structures $\mathcal J$,
such that $J^{\mathcal J , D^{0}}$ is integrable (see Theorem
\ref{inner-descr}).

In Section \ref{special-geom} we use Theorem \ref{main-thm} in
order to reobtain and generalize various well-known results from
special complex geometry, with emphasis on those from \cite{alek},
already mentioned at the beginning of this introduction.\\

{\bf Acknowledgements.} Part of this work was completed during a
Humboldt Research Fellowship at the University of Mannheim
(Germany). Hospitality at the University of Mannheim and financial
support from the Humboldt Foundation are greatly acknowledged.
Partial supported from a CNCS-UEFISCDI grant, project no.
PN-II-ID-PCE-2011-3-0362 is also acknowledged.

\section{Symmetric generalized complex structures}\label{basic}

In this section we study symmetric generalized complex structures.
Subsections \ref{linear} and \ref{analogies} are algebraic, while
in Subsection \ref{integr} we discuss the Courant integrability.

\subsection{Linear symmetric generalized complex
structures}\label{linear}

Let $V$ be a real vector space. We denote by
\begin{equation}\label{g-can}
g_{\mathrm{can}} (X + \xi, Y + \eta ) = \frac{1}{2}\left( \xi (Y)
+\eta (X)\right) , \quad X+\xi , Y+ \eta \in V\oplus V^{*}
\end{equation}
the canonical pseudo-Euclidian metric of neutral signature on
$V\oplus V^{*}.$

\begin{defn}
A (symmetric, respectively skew-symmetric) generalized complex
structure on $V$ is a (symmetric, respectively skew-symmetric with
respect to $g_{\mathrm{can}}$) endomorphism $\mathcal J \in
\mathrm{End}( V \oplus V^{*})$, such that $\mathcal J^{2} = -
\mathrm{Id}$.

\end{defn}

\begin{rem}{\rm In the classical
terminology of generalized geometry (see e.g.
\cite{gualtieri,hitchin}),  a generalized complex structure is, by
definition, skew-symmetric. In this note we prefer the language of
\cite{nan}, where generalized complex structures are not assumed,
a priori, to be compatible in any way with $g_{\mathrm{can}}.$}
\end{rem}

In the following proposition we describe the holomorphic space of
symmetric generalized complex structures. Before we need to
introduce a notation which will be used along the paper.

\begin{notations}{\rm For a complex  subspace $E\subset
V^{\mathbb{C}}$, we denote by $\bar{E}$ the image of $E$ through
the antilinear conjugation $V^{\mathbb{C}}\ni X \rightarrow
\bar{X}\in V^{\mathbb{C}}$ with respect to the real form $V$ of
$V^{\mathbb{C}}$. In particular, $\bar{E}$ is a complex subspace
of $V^{\mathbb{C}}$ (not to be confused with the conjugate vector
space of $E$).}
\end{notations}

\begin{prop}\label{holo} A complex subspace $L$ of
$(V\oplus V^{*})^{\mathbb{C}}$ is the holomorphic space of a
symmetric generalized complex structure on $V$ if and only if it
is of the form
\begin{equation}\label{charact}
L= L^{-}(E, \alpha ):= \{ X+\xi \in E\oplus (V^{\mathbb{C}})^{*},
\quad \xi\vert_{\bar{E}} = i_{X} \alpha \} ,
\end{equation}
where $E$ is any complex subspace of $V^{\mathbb{C}}$, such that
$E+ \bar{E}= V^{\mathbb{C}}$, and $\alpha\in E^{*}\otimes
\bar{E}^{*}$ is any complex bilinear form satisfying the following
two conditions:\

i) it is skew-Hermitian, i.e.
\begin{equation}\label{sym-bar}
\alpha (X, \bar{Y}) + \overline{\alpha (Y, \bar{X})} = 0, \quad
\forall X,Y\in E;
\end{equation}

ii)  $\mathrm{Im}(\alpha \vert_{\Delta})$ is non-degenerate. Here
$\Delta\subset V$ is the real part of $E\cap \bar{E}$, i.e.
$\Delta^{\mathbb{C}} = E \cap \bar{E}$.
\end{prop}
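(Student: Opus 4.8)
The plan is to first reduce the statement to two clean conditions on a complex subspace $L\subseteq(V\oplus V^{*})^{\mathbb{C}}$, and then to translate those conditions into the description by pairs $(E,\alpha)$. Throughout write $\mathbb{T}^{\mathbb{C}}:=(V\oplus V^{*})^{\mathbb{C}}$ and $n:=\dim_{\mathbb{R}}V$.

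\emph{Step 1: a preliminary reduction.} I would first show that $L$ is the holomorphic space of a symmetric generalized complex structure on $V$ if and only if $L\oplus\bar{L}=\mathbb{T}^{\mathbb{C}}$ and $g_{\mathrm{can}}(L,\bar{L})=0$. For the "only if" part, $L$ and $\bar{L}$ are the $(\pm i)$-eigenspaces of the complexified $\mathcal{J}$, so they are complementary (and each has dimension $n$), and for $u\in L$, $v\in\bar{L}$ the symmetry of $\mathcal{J}$ gives $i\,g_{\mathrm{can}}(u,v)=g_{\mathrm{can}}(\mathcal{J}u,v)=g_{\mathrm{can}}(u,\mathcal{J}v)=-i\,g_{\mathrm{can}}(u,v)$, whence $g_{\mathrm{can}}(L,\bar{L})=0$. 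For the "if" part, the endomorphism equal to $i$ on $L$ and $-i$ on $\bar{L}$ commutes with conjugation (hence is real), squares to $-\mathrm{Id}$, and is $g_{\mathrm{can}}$-symmetric precisely because $g_{\mathrm{can}}(L,\bar{L})=0$. After this, it only remains to match the two displayed conditions with the description in the statement.

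\emph{Step 2: extracting $(E,\alpha)$ from $L$.} Given $L$ as above, I would set $E:=\mathrm{pr}(L)$ where $\mathrm{pr}\colon\mathbb{T}^{\mathbb{C}}\to V^{\mathbb{C}}$ is the projection; since $\mathrm{pr}$ is onto and commutes with conjugation, applying it to $L\oplus\bar{L}=\mathbb{T}^{\mathbb{C}}$ yields $E+\bar{E}=V^{\mathbb{C}}$. Using $g_{\mathrm{can}}(L,\bar{L})=0$ and (\ref{g-can}) one finds $L\cap(V^{\mathbb{C}})^{*}\subseteq\bar{E}^{\circ}$ (the annihilator of $\bar{E}$), and the dimension count $\dim\bigl(L\cap(V^{\mathbb{C}})^{*}\bigr)=\dim L-\dim E=n-\dim E=\dim\bar{E}^{\circ}$ forces equality. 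Consequently, for $X\in E$ the covector $\xi$ with $X+\xi\in L$ is unique modulo $\bar{E}^{\circ}$, so $\xi|_{\bar{E}}$ depends only on $X$, and the rule $i_{X}\alpha:=\xi|_{\bar{E}}$ defines (by linearity of $L$) a bilinear form $\alpha\in E^{*}\otimes\bar{E}^{*}$ with $L=L^{-}(E,\alpha)$. Finally, writing a typical element of $\bar{L}$ as $\bar{Y}+\bar{\eta}$ with $Y+\eta\in L$ and using $\bar{\eta}(X)=\overline{\eta(\bar{X})}$, a direct computation from (\ref{g-can}) gives $g_{\mathrm{can}}(X+\xi,\bar{Y}+\bar{\eta})=\frac{1}{2}\bigl(\alpha(X,\bar{Y})+\overline{\alpha(Y,\bar{X})}\bigr)$, so $g_{\mathrm{can}}(L,\bar{L})=0$ is exactly condition (i).

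\emph{Step 3: condition (ii), and the converse.} Since $\dim L=\dim\bar{L}=n=\frac{1}{2}\dim\mathbb{T}^{\mathbb{C}}$, the requirement $L\oplus\bar{L}=\mathbb{T}^{\mathbb{C}}$ is equivalent to $L\cap\bar{L}=0$. An element $X+\xi\in L\cap\bar{L}$ must satisfy $X\in E\cap\bar{E}=\Delta^{\mathbb{C}}$, and unwinding membership in $L$ and in $\bar{L}$ shows (again via conjugation) that such a $\xi$ exists if and only if $\alpha(X,W)=\overline{\alpha(\bar{X},\bar{W})}$ for all $W\in\Delta^{\mathbb{C}}$, the extension being then unique because $E+\bar{E}=V^{\mathbb{C}}$. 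Hence $L\cap\bar{L}$ is isomorphic to the left kernel of the $\mathbb{C}$-bilinear form $\gamma(X,W):=\alpha(X,W)-\overline{\alpha(\bar{X},\bar{W})}$ on $\Delta^{\mathbb{C}}$, and on the real form $\Delta$ one has $\gamma|_{\Delta\times\Delta}=2i\,\mathrm{Im}(\alpha|_{\Delta})$; choosing a real basis of $\Delta$ (which is a complex basis of $\Delta^{\mathbb{C}}$), the matrix of $\gamma$ is $2i$ times the real matrix of $\mathrm{Im}(\alpha|_{\Delta})$, so $\gamma$ is non-degenerate if and only if $\mathrm{Im}(\alpha|_{\Delta})$ is. Thus $L\cap\bar{L}=0$ if and only if condition (ii) holds. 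For the converse direction, starting from any $E$ with $E+\bar{E}=V^{\mathbb{C}}$ and any $\alpha\in E^{*}\otimes\bar{E}^{*}$ satisfying (i) and (ii), one notes $\dim L^{-}(E,\alpha)=\dim E+(n-\dim\bar{E})=n$ automatically and reruns the computations of Steps 2--3 to conclude that $L^{-}(E,\alpha)$ satisfies $L\oplus\bar{L}=\mathbb{T}^{\mathbb{C}}$ and $g_{\mathrm{can}}(L,\bar{L})=0$, hence is a holomorphic space by Step 1. I expect the main obstacle to be the conjugation bookkeeping in Steps 2--3 together with the elementary (but easy to state incorrectly) linear-algebra fact that a $\mathbb{C}$-bilinear form on a complexification is non-degenerate exactly when its restriction to the underlying real form is.
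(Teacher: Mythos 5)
Your proof is correct and follows essentially the same route as the paper: both define $E$ as the projection of $L$, recover $\alpha$ from the restriction of the covector components to $\bar{E}$, and use the $g_{\mathrm{can}}$-orthogonality of $L$ and $\bar{L}$ to get well-definedness and the skew-Hermitian identity. You are in fact more complete on two points the paper leaves implicit, namely the full equivalence $L\cap\bar{L}=0\Leftrightarrow$ (ii) (which you handle cleanly via the complexified form $\gamma$, whereas the paper only exhibits an element of $L\cap\bar{L}$ from a real kernel vector of $\mathrm{Im}(\alpha\vert_{\Delta})$) and the converse implication, which the paper dispatches with ``it may be shown''.
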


\begin{proof}
Let $\mathcal J$ be a symmetric generalized complex structure on
$V$, with holomorphic space $L.$ Thus $L$ is a complex subspace of
$(V\oplus V^{*})^{\mathbb{C}}$, with $L\oplus \bar{L}= (V\oplus
V^{*})^{\mathbb{C}}$, and $L$ is $g_{\mathrm{can}}$-orthogonal to
$\bar{L}$ (from the symmetry of $\mathcal J$). We denote by
$$
\pi_{1} : (V\oplus V^{*})^{\mathbb{C}}\rightarrow V^{\mathbb{C}},
\quad \pi_{2} : (V\oplus V^{*})^{\mathbb{C}}\rightarrow
(V^{\mathbb{C}})^{*}
$$
the natural projections. We define $E:= \pi_{1} (L)$ and we let
\begin{equation}\label{e}
\alpha : E \rightarrow \bar{E}^{*}, \quad \alpha (X) :=
\pi_{2}\circ (\pi_{1}\vert_{L})^{-1} (X)\vert_{\bar{E}}.
\end{equation}
We claim that $\alpha\in E^{*}\otimes \bar{E}^{*}$ is well
defined. To prove this claim, we use
\begin{equation}\label{ortho-bar}
\xi (\bar{Y}) + \bar{\eta} (X) = 0, \quad \forall X+\xi , Y+\eta
\in L,
\end{equation}
(which holds because $L$ is $g_{\mathrm{can}}$-orthogonal to $\bar{L}$). Thus,
if $X+\xi_{1}, X+\xi_{2}\in (\pi_{1}\vert_{L})^{-1}(X)$, i.e.
$X+\xi_{1},X+\xi_{2}\in L$, then, from (\ref{ortho-bar}),
$\xi_{1}=\xi_{2}$ on $\bar{E}$ and we obtain that $\alpha$ is
well-defined, as required. From the very definition of $\alpha$,
$L \subset L^{-}(E, \alpha )$ and, for dimension reasons, we
deduce that $L= L^{-} (E, \alpha ).$ Since  $L$ is
$g_{\mathrm{can}}$-orthogonal to $\bar{L}$, $\alpha$ is
skew-Hermitian. Moreover, $L\oplus \bar{L}= (V\oplus
V^{*})^{\mathbb{C}}$ implies that $E+ \bar{E} = V^{\mathbb{C}}$.
We now claim that $L\cap \bar{L} = \{ 0\}$ implies that
$\mathrm{Im} (\alpha\vert_{\Delta})$ is non-degenerate. To prove
this claim, we assume, by absurd, that there is $X\neq 0$ in the
kernel of $\mathrm{Im} (\alpha\vert_{\Delta})$. Define $\xi \in
(V^{\mathbb{C}})^{*}$ by
$$
\xi (Z)=\overline{\alpha (X, \bar{Z})}, \quad \xi (\bar{Z}) =
\alpha (X, \bar{Z}), \quad \forall Z\in E.
$$
Using that $X\in \mathrm{Ker}(\mathrm{Im}
(\alpha\vert_{\Delta}))$, one can check that $\xi$ is well-defined
and $X+\xi \in L \cap \bar{L}$, which is a contradiction. We
proved that the holomorphic space $L$ of $\mathcal J$ is of the
required form.

Conversely, it may be shown that any subspace $E\subset
V^{\mathbb{C}}$, with $E+ \bar{E} = V^{\mathbb{C}}$, and
skew-Hermitian form $\alpha\in E^{*}\otimes \bar{E}^{*}$ with the
non-degeneracy property {\it ii)}, define, by (\ref{charact}), the
holomorphic space of a symmetric generalized complex structure on
$V$.
\end{proof}

\begin{cor}  Let $\mathcal J$ be a symmetric generalized complex
structure on $V$, with holomorphic space $L^{-}(E, \alpha )$. Then
 $\mathrm{Re}(\alpha\vert_{\Delta} )$ is a $2$-form  and
$\mathrm{Im}(\alpha\vert_{\Delta} )$ is a pseudo-Euclidian
metric on
$\Delta$ (the real part of $E\cap \bar{E}$).\end{cor}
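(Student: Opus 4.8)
The plan is to read off both statements directly from the skew-Hermitian condition (\ref{sym-bar}), restricted to the subspace $\Delta^{\mathbb{C}} = E\cap\bar E$. On $\Delta$ one has $E = \bar E$ there, so for $X, Y \in \Delta^{\mathbb{C}}$ the form $\alpha(X,\bar Y)$ makes sense with both arguments ranging over the same complexified space, and conjugation of the second slot is an honest operation on $\Delta^{\mathbb{C}}$. Writing $\beta(X,Y) := \alpha(X,\bar Y)$ for $X,Y\in\Delta^{\mathbb{C}}$, condition (\ref{sym-bar}) becomes $\beta(X,Y) + \overline{\beta(Y,X)} = 0$. Decomposing $\beta = \mathrm{Re}(\beta) + i\,\mathrm{Im}(\beta)$ into its real and imaginary parts (as real-bilinear forms on the real vector space $\Delta$, after noting $\beta$ restricts to a real-bilinear form $\Delta\times\Delta\to\mathbb C$), the relation $\beta(X,Y) = -\overline{\beta(Y,X)}$ splits into $\mathrm{Re}(\beta)(X,Y) = -\mathrm{Re}(\beta)(Y,X)$ and $\mathrm{Im}(\beta)(X,Y) = \mathrm{Im}(\beta)(Y,X)$. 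Hence $\mathrm{Re}(\alpha|_\Delta)$ is skew-symmetric, i.e.\ a $2$-form, and $\mathrm{Im}(\alpha|_\Delta)$ is symmetric.

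The only remaining point is that $\mathrm{Im}(\alpha|_\Delta)$ is nondegenerate as a symmetric bilinear form, so that it is a pseudo-Euclidian metric on $\Delta$; but this is exactly condition {\it ii)} of Proposition \ref{holo}, which is part of the hypothesis that $L^{-}(E,\alpha)$ is the holomorphic space of a symmetric generalized complex structure. So there is essentially nothing to prove there beyond quoting the proposition.

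The one technical care needed is the claim that $\alpha|_\Delta$, a priori an element of $E^*\otimes\bar E^*$ restricted to $\Delta^{\mathbb{C}}\times\Delta^{\mathbb{C}} = (E\cap\bar E)\times(E\cap\bar E)$, does descend to a genuine real-bilinear form on the real subspace $\Delta\subset V$ (so that "real part" and "imaginary part" are meaningful). This follows because $\Delta^{\mathbb{C}} = \Delta\otimes_{\mathbb R}\mathbb C$ and, for $X,Y\in\Delta$, we have $\bar Y = Y$, so $\beta(X,Y) = \alpha(X,Y)$ is defined for all $X,Y$ in the real space $\Delta$ and extends complex-bilinearly; restricting back to $\Delta$ gives the real-bilinear form whose real and imaginary parts we decompose. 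I expect this bookkeeping about which conjugations act on which spaces to be the only place where one can slip; the algebraic content is the immediate consequence of (\ref{sym-bar}) described above.
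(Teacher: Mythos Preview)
Your proof is correct and follows exactly the route the paper indicates: the paper's own proof reads simply ``Straightforward, from (\ref{sym-bar}) and the non-degeneracy of $\mathrm{Im}(\alpha\vert_{\Delta})$,'' and you have unpacked precisely this --- restricting the skew-Hermitian identity to real vectors in $\Delta$ (where $\bar X = X$) to obtain $\alpha(X,Y) = -\overline{\alpha(Y,X)}$, splitting into real and imaginary parts, and quoting condition {\it ii)} of Proposition \ref{holo} for the non-degeneracy. The detour through the auxiliary form $\beta(X,Y)=\alpha(X,\bar Y)$ is harmless but unnecessary, since on $\Delta$ it coincides with $\alpha$ anyway.
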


\begin{proof} Straightforward, from (\ref{sym-bar}) and the non-degeneracy of
$\mathrm{Im}(\alpha\vert_{\Delta})$.
\end{proof}

The second example below shows that symmetric generalized complex
structures exist on vector spaces of arbitrary dimension.

\begin{exa}\label{example}{\rm i) A complex structure $J$ on $V$ defines a
symmetric generalized complex structure
$$
{\mathcal J}:= \left(\begin{tabular}{cc} $J$ & $0$\\
$0$ & $J^{*}$

\end{tabular}\right) ,
$$
where $J^{*}\xi := \xi \circ J $, for any $\xi \in V^{*}.$   Its
holomorphic space is $L^{-}(V^{1,0}, 0)= V^{1,0}\oplus\mathrm{Ann}
(V^{0,1})$, where $V^{1,0}$ is the
holomorphic space of $J$.\\

ii) A pseudo-Euclidian metric, seen as a map $g: V \rightarrow
V^{*}$, defines a symmetric generalized complex structure
$$
{\mathcal J}:= \left(\begin{tabular}{cc} $0$ & $g^{-1}$\\
$-g$ & $0$
\end{tabular}\right) .
$$
Its holomorphic space is $L^{-}(V^{\mathbb{C}}, i
g^{\mathbb{C}})$, where $g^{\mathbb{C}}\in (V^{\mathbb{C}}\otimes
V^{\mathbb{C}})^{*}$ is
the complex linear extension of $g$.\\

iii) If $\mathcal J$ is a symmetric generalized complex structure,
then so is its $B$-field transformation $ \mathrm{exp}(B)\cdot
{\mathcal J}:= \mathrm{exp}(B) \circ {\mathcal J} \circ
\mathrm{exp}(-B)$, where $B\in \Lambda^{2}(V^{*})$ and the
$B$-field action is defined by
$$
\mathrm{exp}(B): V\oplus V^{*} \rightarrow V\oplus V^{*}, \quad
X+\xi \rightarrow X + i_{X}B +\xi .
$$
If $L^{-}(E, \alpha )$ is the holomorphic space of $\mathcal J$,
then $L^{-}(E, \alpha +B^{\mathbb{C}}\vert_{E\otimes \bar{E}})$ is
the holomorphic space of $\mathrm{exp}(B)\cdot {\mathcal J}$,
where $B^{\mathbb{C}}\in \Lambda^{2} (V^{\mathbb{C}})^{*}$ is the
complex linear extension of $B$.}
\end{exa}

In following theorem we show that any symmetric generalized
complex structure can be (non-canonically) obtained from a complex
structure, a pseudo-Euclidian metric and a $B$-field
transformation.

\begin{thm}\label{b-field}
Any symmetric generalized complex structure on a vector space $V$
is a $B$-field transformation of the direct sum of one determined
by a complex structure and another determined by a
pseudo-Euclidian metric (as in Example \ref{example}).
\end{thm}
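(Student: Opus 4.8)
The plan is to work entirely on the level of holomorphic spaces, using the description $L = L^{-}(E,\alpha)$ from Proposition \ref{holo}. Given a symmetric generalized complex structure $\mathcal J$ on $V$ with holomorphic space $L^{-}(E,\alpha)$, the first step is to isolate the ``metric part'' of the data. Set $\Delta^{\mathbb C} = E\cap\bar E$, so that $\mathrm{Im}(\alpha|_\Delta)$ is a pseudo-Euclidian metric on $\Delta$ by the Corollary. I would then choose a complex subspace $W\subset V^{\mathbb C}$ complementary to $\Delta^{\mathbb C}$ inside $E$, i.e. $E = \Delta^{\mathbb C}\oplus W$. Since $E + \bar E = V^{\mathbb C}$ and $E\cap\bar E = \Delta^{\mathbb C}$, one checks that $W$ and $\bar W$ are disjoint and $W\oplus\bar W$ is a complement to $\Delta^{\mathbb C}$ in $V^{\mathbb C}$; this is exactly the data of a complex structure $J$ on a subspace $U\subset V$ with $U^{\mathbb C} = W\oplus\bar W$ and $W = U^{1,0}$, together with $V = U\oplus\Delta$ as real vector spaces.

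The second step is to kill the off-diagonal and the real part of $\alpha$ by a $B$-field. The form $\alpha\in E^{*}\otimes\bar E^{*}$ splits, relative to $E = \Delta^{\mathbb C}\oplus W$ and $\bar E = \Delta^{\mathbb C}\oplus\bar W$, into a piece on $\Delta^{\mathbb C}\otimes\Delta^{\mathbb C}$, pieces mixing $\Delta^{\mathbb C}$ with $W$ or $\bar W$, and a piece on $W\otimes\bar W$. I want to find a real $2$-form $B\in\Lambda^{2}V^{*}$ whose complex-linear extension, restricted to $E\otimes\bar E$, cancels everything except $\mathrm{Im}(\alpha|_\Delta)$ in the $\Delta$-slot. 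By Example \ref{example}(iii), replacing $\mathcal J$ by $\exp(-B)\cdot\mathcal J$ changes $\alpha$ to $\alpha - B^{\mathbb C}|_{E\otimes\bar E}$; the skew-Hermitian condition (\ref{sym-bar}) is what guarantees that the ``unwanted'' part of $\alpha$ is of a shape that can be realized as the restriction of a genuine real $2$-form (one verifies that a complex bilinear form on $E\otimes\bar E$ extends to the complexification of a real $2$-form on $V$ precisely when it satisfies the conjugation-symmetry built into (\ref{sym-bar}), after also absorbing $\mathrm{Re}(\alpha|_\Delta)$, which is already a genuine $2$-form on $\Delta$). After this $B$-field transformation the new holomorphic space is $L^{-}(E,\alpha')$ with $\alpha' = i\,g^{\mathbb C}$ on $\Delta^{\mathbb C}\otimes\Delta^{\mathbb C}$ (where $g = \mathrm{Im}(\alpha|_\Delta)$, using that $\alpha'$ must still be skew-Hermitian and non-degenerate on $\Delta$, as in Example \ref{example}(ii)) and $\alpha' = 0$ on all the other slots.

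The third step is to recognize $L^{-}(E,\alpha')$ as the direct sum of the two model structures. On the summand $U$ with its complex structure $J$, the condition $\xi|_{\bar E} = i_X\alpha'$ with $\alpha'$ vanishing on $W\otimes\bar W$ says precisely $\xi|_{\bar W} = 0$, i.e. $\xi\in\mathrm{Ann}(U^{0,1})$, so this factor is $L^{-}(U^{1,0},0)$, the holomorphic space of Example \ref{example}(i). On the summand $\Delta$ the condition reads $\xi|_{\Delta^{\mathbb C}} = i_X(i g^{\mathbb C})$, which is the holomorphic space of Example \ref{example}(ii) for the metric $g$ on $\Delta$. Since the two sets of conditions live on complementary subspaces and do not interact (the mixed slots of $\alpha'$ vanish), $L^{-}(E,\alpha')$ is the holomorphic space of the direct sum, proving the theorem.

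The main obstacle is the middle step: verifying that the correction form one wants to subtract from $\alpha$ genuinely comes from a \emph{real} $2$-form on $V$, and that after subtraction exactly $i g^{\mathbb C}$ survives on $\Delta^{\mathbb C}$. This is a linear-algebra bookkeeping argument about how $\Lambda^{2}(V^{\mathbb C})^{*}$ restricts to $E\otimes\bar E$ and how the conjugation-antisymmetry (\ref{sym-bar}) matches the reality constraint $\overline{B^{\mathbb C}(\bar X,\bar Y)} = B^{\mathbb C}(X,Y)$; everything else is routine once the splitting $V = U\oplus\Delta$, $E = U^{1,0}\oplus\Delta^{\mathbb C}$ is in place. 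One should also note the choice of $W$ (hence of $J$ and of the splitting) is non-canonical, which is why the statement only claims existence.
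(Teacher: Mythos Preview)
Your approach is essentially the paper's: split $V=\Delta\oplus N$ (your $U$ is the paper's $N$, your $W$ is $E\cap N^{\mathbb C}$), recognize the target direct-sum structure, and construct a $B$-field realizing it. For your acknowledged ``main obstacle'' the paper does not argue via an abstract extension criterion but simply writes $B$ down block by block in terms of $\mathrm{Re}(\alpha)$ (on $\Delta\times\Delta$, $\Delta\times N$, $N\times N$, using $N^{\mathbb C}=W\oplus\bar W$) and checks from the skew-Hermitian property that $B$ is a genuine real $2$-form whose complexification kills the unwanted pieces; note that your parenthetical claim ``extends \ldots\ precisely when it satisfies (\ref{sym-bar})'' is slightly too strong as stated --- on $\Delta\times\Delta$ a skew-Hermitian form need not arise from a real $2$-form, which is exactly why $i\,g_\Delta^{\mathbb C}$ survives --- but you already hedge this correctly.
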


\begin{proof}
Let $\mathcal J\in \mathrm{End}(V\oplus V^{*})$ be a symmetric
generalized complex structure, with holomorphic space $L= L^{-}(E,
\alpha )$. Let $\Delta$ be the real
part of $E\cap \bar{E}$ (i.e. $\Delta \subset V$ and
$\Delta^{\mathbb{C}} = E\cap \bar{E}$) and $N$ a complement of
$\Delta$ in $V$. Thus
\begin{equation}\label{descompuneri}
V = \Delta \oplus N, \quad E =\Delta^{\mathbb{C}}\oplus (E \cap
N^{\mathbb{C}}), \quad \bar{E}=\Delta^{\mathbb{C}}\oplus (\bar{E}
\cap N^{\mathbb{C}}).
\end{equation}
We notice that $\Delta$ comes with pseudo-Euclidian metric, namely
$g_{\Delta}:= \mathrm{Im}(\alpha\vert_{\Delta} )$, and $N$ with a complex
structure $J^{N}$, with holomorphic space $E\cap N^{\mathbb{C}} $
(and anti-holomorphic space $\bar{E}\cap N^{\mathbb{C}}$). We
claim that there is $B\in \Lambda^{2}(V^{*})$ such that (as vector
spaces with symmetric generalized complex structures)
\begin{equation}\label{search-B}
(V,\mathrm{exp}(B)\cdot \mathcal J )=(\Delta ,
g_{\Delta} )\oplus (N, J^{N}),
\end{equation}
or, in terms of their holomorphic spaces,
\begin{equation}\label{search-B-0}
L^{-} (E, \alpha + B^{\mathbb{C}}\vert_{E\otimes \bar{E}}) =
L^{-}(\Delta^{\mathbb{C}}, i
(g_{\Delta})^{\mathbb{C}}) \oplus \left( E\cap
N^{\mathbb{C}} \oplus \mathrm{Ann} (\bar{E}\cap
N^{\mathbb{C}})\right).
\end{equation}
From the second and third relation
(\ref{descompuneri}), we obtain that (\ref{search-B-0}) holds if
and only if, for any $X\in E$, the covector $i_{X} (\alpha +
B^{\mathbb{C}})\in \bar{E}^{*}$ is given by
\begin{equation}\label{conditia2}
i_{X} (\alpha + B^{\mathbb{C}}) \vert_{\Delta^{\mathbb{C}}} =
i(g_{\Delta})^{\mathbb{C}}
(\mathrm{pr}_{\Delta^{\mathbb{C}}}(X), \cdot ), \quad i_{X}
(\alpha + B^{\mathbb{C}})\vert_{\bar{E}\cap N^{\mathbb{C}}}=0,
\end{equation}
where $\mathrm{pr}_{\Delta^{\mathbb{C}}}:
V^{\mathbb{C}}\rightarrow \Delta^{\mathbb{C}}$ and
$\mathrm{pr}_{N^{\mathbb{C}}}:V^{\mathbb{C}} \rightarrow
N^{\mathbb{C}}$ are the natural projections determined by the
decomposition $V^{\mathbb{C}}= \Delta^{\mathbb{C}}\oplus
N^{\mathbb{C}}.$ Moreover, it is easy to see that
(\ref{conditia2}) is equivalent to
\begin{equation}\label{cerinta}
(\mathrm{Re} (\alpha )+B) \vert_{\Delta\otimes \Delta} = 0, \quad
(\alpha + B^{\mathbb{C}} )\vert_{(E\cap N^{\mathbb{C}})\otimes
\Delta^{\mathbb{C}}} = 0, \quad (\alpha + B^{\mathbb{C}}
)\vert_{E\otimes (\bar{E}\cap N^{\mathbb{C}})}= 0.
\end{equation}
Hence, we are looking for a (real) $2$-form $B\in
\Lambda^{2}(V^{*})$ such that (\ref{cerinta}) is satisfied. In
order to define $B$, we use $V = \Delta \oplus N$ and
$N^{\mathbb{C}} = (E \cap N^{\mathbb{C}}) \oplus ( \bar{E}\cap
N^{\mathbb{C}}).$ Then, for any $X ,Y\in \Delta$ and $Z,W\in N$,
let
$$
B (X, Y):= - \mathrm{Re}(\alpha )(X,Y), \quad B(Z,W) := -2
\mathrm{Re}(\alpha )(z, \bar{w})
$$
and
$$
 B(X, Z) = - B(Z, X):= 2 \mathrm{Re} (\alpha) (z,X),
$$
where $z,w\in E\cap N^{\mathbb{C}}$ (uniquely determined) are such
that $Z= z+\bar{z}$ and $W= w+\bar{w}$. Since $\alpha \in
E^{*}\otimes \bar{E}^{*}$ is skew-Hermitian, $B$ is skew-symmetric
and its complexification satisfies (\ref{cerinta}) (easy check).
This concludes our claim.

\end{proof}

\subsection{Analogy with skew-symmetric generalized complex
structures}\label{analogies}

The theory of symmetric generalized complex structures from the
previous section is  similar to the theory of skew-symmetric
generalized complex structures developed by Gualtieri in
\cite{gualtieri} and owing to this, one can treat these two types
of structures in a unified way. It is well-known (see e.g.
\cite{gualtieri}) that complex and symplectic structures define
skew-symmetric generalized complex structures and this corresponds
to Example \ref{example} i) and ii) from the previous section. In
the same framework, Theorem \ref{b-field} above is analogous to
Theorem 4.13 from \cite{gualtieri}, which states that any
skew-symmetric generalized complex structure, is, modulo a
$B$-field transformation, the direct sum of a skew-symmetric
generalized complex structure of symplectic type and of  one of
complex type.

The following unified description of the holomorphic space of
symmetric and skew-symmetric generalized complex structures on
vector spaces is a rewriting of Proposition \ref{holo} from the
previous section and of Propositions 2.6 and 4.4 from
\cite{gualtieri}. We shall use it in the statement of Theorem
\ref{main-thm}.

\begin{cor}\label{common} A complex subspace $L\subset (V\oplus
V^{*})^{\mathbb{C}}$ is the holomorphic space of a symmetric or,
respectively, skew-symmetric generalized complex structure 
if and only if it is of the form
\begin{equation}\label{tau-1}
L= L^{\tau}(E, \alpha ) = \{ X+\xi \in E\oplus
(V^{\mathbb{C}})^{*}, \quad \xi\vert_{\tau (E)} = i_{X}\alpha \}
\end{equation}
where $E \subset V^{\mathbb{C}}$ is a complex subspace with
$E+\bar{E}=V^{\mathbb{C}}$ and $\alpha\in E^{*}\otimes
\tau({E})^{*}$ is complex bilinear, such that
\begin{equation}\label{tau-2}
\alpha (X, \tau(Y)) + \tau (\alpha  (Y, \tau (X) )) = 0, \quad
\forall X,Y\in E
\end{equation}
and $\mathrm{Im} (\alpha \vert_{\Delta})$ is non-degenerate (where
$\Delta \subset V$,  $\Delta^{\mathbb{C}} = E \cap \bar{E}$).

In (\ref{tau-1}) and (\ref{tau-2}) the maps $\tau : V^{\mathbb{C}}
\rightarrow V^{\mathbb{C}}$ and $\tau : \mathbb{C}\rightarrow
\mathbb{C}$ are both the standard conjugations, respectively both
the identity maps.
\end{cor}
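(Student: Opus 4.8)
The plan is to observe that Corollary \ref{common} is essentially a bookkeeping unification of two results already in hand: Proposition \ref{holo} for the symmetric case, and Propositions 2.6 and 4.4 of \cite{gualtieri} for the skew-symmetric case. So the proof will consist of checking that, under the two choices of $\tau$, the displayed conditions (\ref{tau-1}) and (\ref{tau-2}) specialize correctly to the two known descriptions. I would split the argument into the two cases.

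First, the case where $\tau$ is the standard conjugation on both $V^{\mathbb{C}}$ and $\mathbb{C}$. Then $\tau(E) = \bar E$, so (\ref{tau-1}) reads $L = \{X+\xi \in E\oplus(V^{\mathbb{C}})^{*} : \xi|_{\bar E} = i_X\alpha\}$, which is exactly $L^{-}(E,\alpha)$ from (\ref{charact}); and (\ref{tau-2}) becomes $\alpha(X,\bar Y) + \overline{\alpha(Y,\bar X)} = 0$, which is exactly the skew-Hermitian condition (\ref{sym-bar}). Since the hypotheses $E+\bar E = V^{\mathbb{C}}$ and non-degeneracy of $\mathrm{Im}(\alpha|_\Delta)$ are literally the same, Proposition \ref{holo} gives the equivalence in this case verbatim.

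Second, the case where $\tau$ is the identity on both $V^{\mathbb{C}}$ and $\mathbb{C}$. Then $\tau(E) = E$, so (\ref{tau-1}) reads $L = \{X+\xi : \xi|_E = i_X\alpha\}$ with $\alpha \in E^{*}\otimes E^{*}$, and (\ref{tau-2}) becomes $\alpha(X,Y) + \alpha(Y,X) = 0$, i.e.\ $\alpha$ is skew-symmetric; this is precisely the $(E,\varepsilon)$ data of Propositions 2.6 and 4.4 of \cite{gualtieri} describing maximal isotropic subspaces of $(V\oplus V^{*})^{\mathbb{C}}$ and the skew-symmetric generalized complex structures among them. Here I would need to double-check that Gualtieri's non-degeneracy/transversality condition $L\cap\bar L = 0$ translates into the non-degeneracy of $\mathrm{Im}(\alpha|_\Delta)$ with $\Delta^{\mathbb{C}} = E\cap\bar E$ exactly as in our Proposition \ref{holo}; this is the one spot that is not a pure change of notation, and it is the step I expect to be the main (though modest) obstacle. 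The computation is the same in spirit as the corresponding step in the proof of Proposition \ref{holo}: if $X\neq 0$ lies in the kernel of $\mathrm{Im}(\alpha|_\Delta)$ one builds a covector $\xi$ producing a nonzero element of $L\cap\bar L$, and conversely a nonzero element of $L\cap\bar L$ projects to such an $X$. With both cases verified, (\ref{tau-1})--(\ref{tau-2}) together with the stated conditions characterize exactly the holomorphic spaces in question, which is the claim.
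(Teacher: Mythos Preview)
Your proposal is correct and matches the paper's own treatment: the paper does not give a separate proof of Corollary \ref{common} but simply states that it is ``a rewriting of Proposition \ref{holo} from the previous section and of Propositions 2.6 and 4.4 from \cite{gualtieri}.'' Your case-by-case verification that the two choices of $\tau$ specialize to these two results is exactly what is intended, and your identification of the non-degeneracy check in the skew-symmetric case as the only step requiring a short argument (parallel to the one in the proof of Proposition \ref{holo}) is accurate.
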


\subsection{Remarks on integrability}\label{integr}

The generalized tangent bundle $\mathbb{T}M = TM\oplus T^{*}M$ of
a smooth manifold $M$ has a canonical metric of neutral signature,
defined like in (\ref{g-can}), and the theory developed in the
previous sections extends pointwise to manifolds, in an obvious
way.

\begin{defn} A generalized complex structure on a manifold $M$ is
a smooth field of endomorphisms $\mathcal J$ of $\mathbb{T}M$,
which, at any $p\in M$, is a generalized complex structure on
${T}_{p}M.$
\end{defn}

\begin{rem}{\rm
As opposed to the usual terminology, we do not assume that
generalized complex structures  on manifolds are Courant integrable (see
below). In fact, the generalized complex structures we are mainly
interested in, namely,  the symmetric ones, turn out not to be
Courant integrable (see Lemma \ref{non-integrable}).}
\end{rem}

\begin{defn}
A generalized complex structure $\mathcal J$  on a manifold $M$ is called
Courant integrable if the space of sections of its holomorphic
bundle (the $i$-eigenbundle of $\mathcal J$) 
 is closed under the Courant bracket, defined by
$$
[X+\xi , Y+\eta ] = [X, Y] + L_{X}\eta  - L_{Y}\xi +\frac{1}{2}
\left( \xi (Y) - \eta (X))\right) ,
$$
for any vector fields $X$, $Y$ and $1$-forms  $ \xi , \eta $.

\end{defn}

The holomorphic bundle $L\subset \mathbb{T}^{\mathbb{C}}M$ of a
symmetric or skew-symmetric generalized  complex structure on $M$
may be described in terms of a complex subbundle $E\subset
T^{\mathbb{C}}M$ (the image of $L$ through the natural projection
$\mathbb{T}^{\mathbb{C}}M \rightarrow T^{\mathbb{C}}M$) and a
smooth section $\alpha \in \Gamma (E^{*}\otimes \tau (E)^{*})$,
with the algebraic properties from Corollary \ref{common} (we
assume that all points are regular, i.e. $E$ is a genuine complex
vector bundle). There is a basic result of Gualtieri (see
Proposition 4.19 of \cite{gualtieri}) which expresses the Courant
integrability of a skew-symmetric generalized complex structure in
terms of its holomorphic bundle. Since we shall use it repeatedly,
we state it here:

\begin{prop}\cite{gualtieri}\label{gualtieri-prop} A skew-symmetric generalized complex structure
on a manifold $M$, with holomorphic bundle $L = L(E, \alpha )$, is
Courant integrable, if and only if the subbundle $E\subset
T^{\mathbb{C}}M$ is involutive and $d_{E}\alpha =0$, where
$d_{E}\alpha\in\Gamma ( \Lambda^{3}E^{*})$ is the exterior
differential of $\alpha\in \Gamma (\Lambda^{2}E^{*})$, defined by
\begin{align*}
(d_{E}\alpha )(X, Y, Z) &:= X\left(\alpha (Y, Z)\right) + Z\left(
\alpha (X, Y)\right) +Y\left( \alpha (Z,X)\right)
\\
& + \alpha ( X, [Y,Z]) +\alpha (Z, [X, Y]) +\alpha (Y, [Z, X]),
\end{align*}
for any $X, Y, Z\in \Gamma (E).$ \end{prop}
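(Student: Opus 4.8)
The plan is to establish both implications, the core being a single computation of the Courant bracket of two local sections of $L = L(E,\alpha)$. Here $\tau = \mathrm{Id}$, so $\alpha$ is a genuine skew-symmetric $2$-form on $E$, $L = \{ W+\omega \in E \oplus (T^{\mathbb{C}}M)^{*} : \omega\vert_{E} = i_{W}\alpha \}$, and $L$ is maximal isotropic for $g_{\mathrm{can}}$, being the $i$-eigenbundle of a skew-symmetric $\mathcal J$. Two preliminary remarks: first, the projection $\pi : \mathbb{T}^{\mathbb{C}}M \to T^{\mathbb{C}}M$ maps $L$ onto $E$, and every local section $X$ of $E$ lifts to a section $X+\xi$ of $L$ (extend $i_{X}\alpha \in \Gamma(E^{*})$ to some $\xi$); second, for such a lift and any $Z \in \Gamma(E)$ one has $\xi(Z) = \alpha(X,Z)$, so $\xi\vert_{E}$ depends only on $X$ and $\alpha$. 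Since $\pi$ carries the Courant bracket to the Lie bracket on the $T^{\mathbb{C}}M$-component, these remarks already give involutivity of $E$ in the ``only if'' direction: if $\Gamma(L)$ is Courant-closed and $X,Y \in \Gamma(E)$, lift them to $X+\xi, Y+\eta \in \Gamma(L)$; then $[X+\xi,Y+\eta] \in \Gamma(L)$ has $T^{\mathbb{C}}M$-component $[X,Y]$, so $[X,Y] \in \Gamma(E)$.

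Assuming $E$ involutive, write $[X+\xi,Y+\eta] = [X,Y] + \zeta$ for $X+\xi, Y+\eta \in \Gamma(L)$, with $\zeta$ the $1$-form part read off from the Courant bracket formula of the Definition. The key computation is to evaluate $\zeta$ on $Z \in \Gamma(E)$: by Cartan's formula $(L_{X}\eta)(Z) = X(\eta(Z)) - \eta([X,Z])$ and likewise for $L_{Y}\xi$, and using $[X,Z], [Y,Z] \in \Gamma(E)$ (involutivity) together with the second remark, every resulting term is an expression in $\alpha$, $X$, $Y$, $Z$ and their Lie brackets (so $\zeta\vert_{E}$ is in fact independent of the chosen extensions); in particular $\xi(Y) = \alpha(X,Y)$ and $\eta(X) = -\alpha(X,Y)$ by skew-symmetry, so the last term of the bracket contributes $Z(\alpha(X,Y))$. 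Collecting terms and comparing with the definition of $d_{E}\alpha$ in the statement yields the identity
$$
\zeta(Z) - \alpha([X,Y], Z) = (d_{E}\alpha)(X,Y,Z), \qquad Z \in \Gamma(E),
$$
that is, $\zeta\vert_{E} - i_{[X,Y]}\alpha = (d_{E}\alpha)(X,Y,\cdot)\vert_{E}$. Since $[X,Y] + \zeta \in \Gamma(L)$ precisely when $\zeta\vert_{E} = i_{[X,Y]}\alpha$, this shows that $[X+\xi,Y+\eta] \in \Gamma(L)$ if and only if $(d_{E}\alpha)(X,Y,\cdot)$ vanishes on $E$.

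The two directions now follow. For ``only if'': Courant integrability gives $E$ involutive by the first paragraph, and applying the displayed identity to all $X,Y \in \Gamma(E)$ and their lifts forces $d_{E}\alpha = 0$. For ``if'': with $E$ involutive and $d_{E}\alpha = 0$, the identity shows that the Courant bracket of any two lifts of members of a local frame of $E$ lies again in $\Gamma(L)$; and since the Courant bracket of sections of $L$ is $C^{\infty}$-bilinear modulo $\Gamma(L)$ — the Leibniz correction terms stay in $\Gamma(L)$, while the remaining correction is proportional to $g_{\mathrm{can}}$ and hence vanishes on the isotropic $L$ — closure on a frame propagates to all of $\Gamma(L)$. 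The only genuine difficulty is organizational: tracking which quantities depend solely on $\alpha$ once the Lie derivatives are expanded (this is exactly where involutivity enters, to read $\eta([X,Z])$ as $\alpha(Y,[X,Z])$) and matching signs with the stated formula for $d_{E}\alpha$; no conceptual obstacle remains beyond that bookkeeping.
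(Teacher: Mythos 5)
Your proof is correct. Note that the paper does not prove Proposition \ref{gualtieri-prop} at all: it is quoted from \cite{gualtieri} (Proposition 4.19 there) precisely so that it can be used as a black box, so there is no in-paper argument to compare against. Your computation is the standard one, and it is in fact the exact Courant-bracket analogue of the computation the paper does carry out in the proof of Theorem \ref{main-thm} (where the Lie bracket of lifted sections on $T^{*}M$ replaces the Courant bracket): one checks that for $X+\xi,\,Y+\eta\in\Gamma(L)$ the one-form part of the bracket, restricted to $E$, differs from $i_{[X,Y]}\alpha$ exactly by $(d_{E}\alpha)(X,Y,\cdot)$. Two small remarks: the displayed Courant bracket in the paper is missing a $d$ in its last term (it should be $\tfrac12 d\left(\xi(Y)-\eta(X)\right)$), and your reading of that term as contributing $Z(\alpha(X,Y))$ is the correct one. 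Also, your final frame-propagation step is unnecessary: every section of $L$ already has the form $X+\xi$ with $\xi\vert_{E}=i_{X}\alpha$, and your computation of $\zeta\vert_{E}$ depends only on $X$, $Y$ and $\alpha$, so it applies verbatim to arbitrary sections of $L$ (though the isotropy argument you give for $C^{\infty}$-bilinearity modulo $\Gamma(L)$ is also valid).
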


The following simple lemma holds.

\begin{lem}\label{non-integrable}
A symmetric generalized complex structure is never Courant
integrable.\end{lem}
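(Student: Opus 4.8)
The plan is to show that the holomorphic bundle $L$ of a symmetric generalized complex structure $\mathcal J$ is never closed under the Courant bracket, by exhibiting a failure of closure at the level of the pairing with $\bar L$. The key structural fact, established in the proof of Proposition \ref{holo}, is that symmetry of $\mathcal J$ is equivalent to $L$ being $g_{\mathrm{can}}$-orthogonal to $\bar L$; combined with $L\oplus\bar L=\mathbb T^{\mathbb C}M$, this forces $L$ to be a \emph{maximal isotropic} subbundle for $g_{\mathrm{can}}$ is \emph{not} the case — on the contrary, $L$ is orthogonal to $\bar L$, so $g_{\mathrm{can}}$ restricted to $L$ is non-degenerate (it is the "other half" of the splitting). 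This non-degeneracy of $g_{\mathrm{can}}|_{L}$ is exactly what obstructs integrability.

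More precisely, first I would recall the elementary algebraic identity satisfied by the Courant bracket: for sections $u,v$ of $\mathbb T^{\mathbb C}M$ and a vector field (or section) $w$, one has the compatibility relation
\begin{equation}\label{court-compat}
X\bigl(g_{\mathrm{can}}(u,v)\bigr) = g_{\mathrm{can}}([w,u]+\tfrac12 d\langle w,u\rangle,\,v) + g_{\mathrm{can}}(u,[w,v]+\tfrac12 d\langle w,v\rangle),
\end{equation}
where $w=\pi(X)$; the precise bookkeeping is standard but the upshot is that if $L$ is involutive under the Courant bracket then $g_{\mathrm{can}}(L,\bar L)$ would have to be preserved in a way that is compatible only with $L$ isotropic. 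Then I would argue: suppose for contradiction that $\Gamma(L)$ is closed under the Courant bracket. Since $\bar L$ is then also closed (conjugation is an isometry and intertwines the bracket, up to the standard reality), and since $L$ is orthogonal to $\bar L$ while $g_{\mathrm{can}}$ is non-degenerate, we can pick local sections $u\in\Gamma(L)$, $\bar v\in\Gamma(\bar L)$ with $g_{\mathrm{can}}(u,\bar v)$ a non-constant function $f$ (this is possible precisely because $g_{\mathrm{can}}|_{L\times\bar L}$ is a non-degenerate pairing, so it cannot be forced to be locally constant). Feeding $u,\bar v$ and an appropriate $w$ into \eqref{court-compat} and using that both $[w,u]$-type terms stay in $L$ and $\bar L$ respectively, one finds that $df$ must vanish against all of $\bar L$ and all of $L$ simultaneously at a point, hence $f$ is locally constant — contradiction, unless the manifold is zero-dimensional, which is excluded.

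An alternative and perhaps cleaner route, which I would actually prefer, is to use the $B$-field normal form from Theorem \ref{b-field}: locally (indeed pointwise, but here we work at a single point and just need an open set) $\mathcal J$ is a $B$-field transform of $(\Delta,g_\Delta)\oplus(N,J^N)$ where $g_\Delta$ is a genuine pseudo-Euclidean metric on the nonzero-dimensional space $\Delta$ — nonzero because $\mathrm{Im}(\alpha|_\Delta)$ is non-degenerate and $\Delta^{\mathbb C}=E\cap\bar E\neq 0$ since $E+\bar E=V^{\mathbb C}$ with $E\neq V^{\mathbb C}$ forces... actually one must be a little careful: one needs $\Delta\neq 0$, which holds as soon as $\dim_{\mathbb C}E>\tfrac12\dim V$, and in the worst case $E=V^{\mathbb C}$ gives $\Delta=V$. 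Since $B$-field transformations are Courant automorphisms (they preserve the Courant bracket when $dB=0$, and in general change it by $d B$), it suffices to show a symmetric generalized complex structure of pure "metric type" $\mathcal J_g=\begin{pmatrix}0&g^{-1}\\-g&0\end{pmatrix}$ with $\dim V\geq 1$ is never Courant integrable; its holomorphic bundle is $L^-(V^{\mathbb C},ig^{\mathbb C})$, spanned by elements $X - i\,g(X,\cdot)$, and a direct computation of the Courant bracket of two such sections shows a surviving symmetric term proportional to $g([X,Y],\cdot)$ plus Lie-derivative terms that cannot recombine into the required form, precisely because $g$ is symmetric rather than skew (the analogue of $d_E\alpha=0$ in Proposition \ref{gualtieri-prop} has no chance of holding since $ig^{\mathbb C}$ is not even a $2$-form).

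The main obstacle is the bookkeeping in either approach — making the sign conventions in the Courant bracket line up with $g_{\mathrm{can}}$ and with conjugation, and handling the general (non-closed) $B$-field term carefully so that the obstruction is genuinely intrinsic and not an artifact of the chosen normal form. I expect the cleanest writeup is the reduction via Theorem \ref{b-field} to the metric-type case on a $1$-dimensional factor, where the computation is essentially trivial: on $V=\mathbb R$ with $g$ a nonzero scalar, $L$ is spanned by $e_1 - i\,g\,e^1$, and the Courant self-bracket produces a nonzero multiple of $\tfrac12 d\langle\cdot,\cdot\rangle$-type term obstructing closure whenever $g$ is allowed to vary, while if $g$ is constant the obstruction instead comes from the symmetric (rather than alternating) nature of the pairing entering the $e^1$-component; either way $\Gamma(L)$ fails to close.
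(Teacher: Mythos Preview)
Your proposal contains genuine gaps in both approaches.

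In the first approach you assert that ``$g_{\mathrm{can}}|_{L\times\bar L}$ is a non-degenerate pairing'' and then pick $u\in\Gamma(L)$, $\bar v\in\Gamma(\bar L)$ with $g_{\mathrm{can}}(u,\bar v)$ non-constant. But this directly contradicts the defining property you stated a few lines earlier: for a \emph{symmetric} $\mathcal J$, the bundle $L$ is $g_{\mathrm{can}}$-\emph{orthogonal} to $\bar L$, so $g_{\mathrm{can}}(u,\bar v)\equiv 0$ identically. What is non-degenerate is $g_{\mathrm{can}}|_{L\times L}$, not the cross-pairing. The compatibility identity you wrote down therefore gives no information in the way you try to use it, and the contradiction never materialises.

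The second approach has three independent problems. First, Theorem~\ref{b-field} is a \emph{pointwise} statement: the $B$ it produces depends on a choice of complement $N$ of $\Delta$ and there is no reason for it to be smooth, let alone closed, in a neighbourhood; since a $B$-field transform is a Courant automorphism only when $dB=0$, the reduction does not go through. Second, and more seriously, your claim that $\Delta\neq 0$ is false: in the complex-type case of Example~\ref{example}~i) one has $E=T^{1,0}M$, $\bar E=T^{0,1}M$, hence $E\cap\bar E=0$ and $\Delta=0$. So even if the reduction worked you would still have to treat the pure complex-type factor $(N,J^N)$, and your sketch says nothing about it. Third, the one-dimensional computation at the end is not carried out; the phrases ``produces a nonzero multiple'' and ``either way $\Gamma(L)$ fails to close'' are assertions, not arguments.

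For comparison, the paper's proof is a one-line appeal to a structural result: Proposition~3.26 of \cite{gualtieri} classifies Courant-involutive subbundles of $\mathbb T^{\mathbb C}M$ as either $g_{\mathrm{can}}$-isotropic or of the form $(\Delta\oplus T^*M)^{\mathbb C}$ with $\Delta\subset TM$ involutive. The holomorphic bundle $L$ of a symmetric $\mathcal J$ is neither: it is not isotropic (since $L\perp\bar L$, $L\oplus\bar L=\mathbb T^{\mathbb C}M$ and $g_{\mathrm{can}}$ is non-degenerate, $g_{\mathrm{can}}|_L$ is non-degenerate), and it is not of the second form because such a bundle has real rank at least $\dim M+1>\tfrac12\dim\mathbb T M$ while $L$ has complex rank exactly $\dim M$ and is transverse to its conjugate. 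If you want a direct argument rather than a citation, the cleanest route is to use the non-degeneracy of $g_{\mathrm{can}}|_L$ together with the identity $[u,u]=\tfrac12\,d\,g_{\mathrm{can}}(u,u)$ for the (non-skew) Dorfman bracket, or equivalently the symmetrised Courant identity, applied to a section $u\in\Gamma(L)$ with $g_{\mathrm{can}}(u,u)$ non-constant; but this must be written out carefully and is not what your current text does.
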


\begin{proof} As proved in  Proposition 3.26 of \cite{gualtieri},
a Courant integrable subbundle of $\mathbb{T}^{\mathbb{C}}M$ is
either $g_{\mathrm{can}}$-isotropic  or of the form $(\Delta
\oplus T^{*}M)^{\mathbb{C}}$, where $\Delta \subset TM$ is
involutive (and non-trivial). Hence, it cannot be the holomorphic
bundle $L$ of a symmetric generalized complex structure (recall
that $L$ is $g_{\mathrm{can}}$-orthogonal to $\bar{L}$, $L\oplus
\bar{L}= \mathbb{T}^{\mathbb{C}}M$ and $g_{\mathrm{can}}$ is
non-degenerate).

\end{proof}

\section{Integrable complex structures on cotangent manifolds}\label{main-section}

Let $(M, {\mathcal J} , D)$ be a manifold with a generalized
complex structure $\mathcal J$ and  a linear connection $D$. The
connection $D$ acts on the cotangent bundle $\pi : T^{*}M
\rightarrow M$ and induces a decomposition
\begin{equation}\label{ide}
T(T^{*}M) = H^{D} \oplus T^{\mathrm{vert}} (T^{*}M) = \pi^{*}
(\mathbb{T}M)
\end{equation}
into horizontal and vertical subbundles. Above, we identified the
horizontal bundle $H^{D}$ with $\pi^{*}(TM)$ and the vertical
bundle $T^{\mathrm{vert}}(T^{*}M)$ of the projection $\pi$ with
$\pi^{*}(T^{*}M).$ From now on, we shall use systematically,
without mentioning explicitly,  the identification (\ref{ide})
between $T(T^{*}M)$ and $\pi^{*}(\mathbb{T}M)$.

\begin{defn}\label{a-c} The almost complex structure $J^{\mathcal J , D}:= \pi^{*}({\mathcal J})$
on the cotangent manifold $T^{*}M$  is called the almost complex
structure defined by $\mathcal J$ and $D$.
\end{defn}

In this section we study the integrability of $J^{{\mathcal J},
D}$, under the assumption that $\mathcal J$ is symmetric or
skew-symmetric. We begin by fixing notation.

\begin{notations}{\rm In computations, we shall use the
notation $\tilde{X}\in {\mathcal X}(T^{*}M)$ for the
$D$-horizontal lift of a vector field $X\in {\mathcal X}(M).$
Forms of degree one on $M$ will be considered as constant vertical
vector fields on the cotangent manifold $T^{*}M.$ With these
conventions, the various Lie brackets $[\cdot , \cdot ]_{\mathcal
L}$  of vector fields on $T^{*}M$ are computed as follows:
\begin{equation}\label{lie}
[\tilde{X},\tilde{Y}]_{\mathcal L} (\gamma ) =
{[X,Y]}^{\widetilde{}}(\gamma ) + R^{D}_{X, Y}(\gamma ),\quad
[\tilde{X}, \xi ]_{\mathcal L} = D_{X}\xi , \quad [\xi , \eta
]_{\mathcal L} =0
\end{equation}
for any $X, Y\in {\mathcal X}(M)$, $\xi , \eta \in \Omega^{1}(M)$
and $\gamma \in T^{*}M$, where
$$
R^{D}_{X,Y} := - D_{X}D_{Y} + D_{Y}D_{X} + D_{[X,Y]}
$$
is the curvature of $D$.}
\end{notations}

The main result from this section is the following.

\begin{thm}\label{main-thm} Let ($M, \mathcal J , D)$ be a
manifold with a  symmetric or skew-symmetric generalized  complex
structure $\mathcal J$ and a linear connection $D$. Let
$L^{\tau}(E, \alpha )$ be the holomorphic bundle of $\mathcal J$,
where $E\subset T^{\mathbb{C}}M$ and $\alpha \in \Gamma
(E^{*}\otimes \tau (E)^{*})$ satisfy the algebraic properties from
Corollary \ref{common}. The almost complex structure $J^{\mathcal J, D}$ from Definition
\ref{a-c} is integrable, if and only if the following conditions
hold:

i) $E$ is an involutive subbundle of $T^{\mathbb{C}}M$;

ii) the complex linear extensions of $D$ and $R^{D}$ satisfy
\begin{equation}\label{adaugat-D}
D_{\Gamma (E)} \Gamma (\tau (E)) \subset \Gamma (\tau (E)), \quad
R^{D}\vert_{E\times E}( \tau (E))=0.
\end{equation}

iii) for any $X, Y, Z\in \Gamma (E)$,
\begin{equation}\label{ec}
(D_{X} \alpha )(Y, \tau (Z)) - (D_{Y} \alpha ) (X, \tau (Z)) +
\alpha (T^{D}_{X}Y, \tau (Z)) =0,
\end{equation}
where $T^{D}$ is the torsion of the connection $D$.

\end{thm}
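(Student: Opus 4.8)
The strategy is to compute the Nijenhuis tensor of $J^{\mathcal J,D}=\pi^{*}(\mathcal J)$ on $T^{*}M$ and translate its vanishing into conditions on $(E,\alpha)$ and the curvature of $D$. By the Newlander–Nirenberg theorem, $J^{\mathcal J,D}$ is integrable if and only if the space of sections of its $(1,0)$-bundle (equivalently, its holomorphic bundle $\widetilde{L}\subset T^{\mathbb{C}}(T^{*}M)$) is closed under the ordinary Lie bracket of complex vector fields. Under the identification (\ref{ide}), $T(T^{*}M)\cong\pi^{*}(\mathbb{T}M)$ and $J^{\mathcal J,D}=\pi^{*}(\mathcal J)$, so pointwise the holomorphic bundle of $J^{\mathcal J,D}$ is exactly the pullback of $L=L^{\tau}(E,\alpha)$. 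Using Corollary \ref{common}, a local frame of $\widetilde{L}$ is given by horizontal lifts $\widetilde{X}$ of sections $X\in\Gamma(E)$ together with their "vertical tails": more precisely, a section of $\widetilde{L}$ has the form $\widetilde{X}+\xi$ where $X\in\Gamma(E)$, $\xi\in\Gamma(T^{\mathbb{C},*}M)$ (viewed as a constant vertical vector field), and $\xi|_{\tau(E)}=i_{X}\alpha$. So the plan is: take two such sections $\widetilde{X}+\xi$ and $\widetilde{Y}+\eta$, compute their Lie bracket on $T^{*}M$ using the formulas (\ref{lie}), and demand that the result again lie in $\Gamma(\widetilde{L})$.

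Carrying this out, the bracket formula (\ref{lie}) gives
$$
[\widetilde{X}+\xi,\widetilde{Y}+\eta]_{\mathcal L}
= \widetilde{[X,Y]} + R^{D}_{X,Y} + D_{X}\eta - D_{Y}\xi,
$$
where $R^{D}_{X,Y}(\gamma)$ is the (fiberwise-linear, hence vertical) curvature term and $D_{X}\eta-D_{Y}\xi$ is a constant vertical $1$-form (the $[\xi,\eta]_{\mathcal L}=0$ term drops out). For this to be a section of $\widetilde{L}$ we need: first, the horizontal component $[X,Y]$ to lie in $\Gamma(E)$ — that is condition (i), involutivity of $E$; second, the vertical component, which is $R^{D}_{X,Y}(\gamma)+D_{X}\eta-D_{Y}\xi$, to restrict correctly on $\tau(E)$, i.e.\ to equal $i_{[X,Y]}\alpha$ when paired with $\tau(E)$, \emph{and} the fiberwise-linear part $R^{D}_{X,Y}(\gamma)$ must itself be a legitimate element of the vertical fiber direction compatible with $\widetilde{L}$, which forces $R^{D}_{X,Y}$ to annihilate $\tau(E)$ (giving the second half of (ii)). The requirement $D_{X}\eta-D_{Y}\xi$ be well-defined relative to $\tau(E)$, using $\eta|_{\tau(E)}=i_{Y}\alpha$, $\xi|_{\tau(E)}=i_{X}\alpha$, forces $D_{X}$ to preserve $\Gamma(\tau(E))$ (the first half of (ii)) so that one may differentiate the constraint. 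Once $D_{\Gamma(E)}\Gamma(\tau(E))\subset\Gamma(\tau(E))$ is in force, evaluating $\big(R^{D}_{X,Y}(\gamma)+D_{X}\eta-D_{Y}\xi\big)(\tau(Z))$ for $Z\in\Gamma(E)$ against the required value $(i_{[X,Y]}\alpha)(\tau(Z))=\alpha([X,Y],\tau(Z))$, expanding $D_{X}(\eta(\tau(Z)))=D_{X}((i_{Y}\alpha)(\tau(Z)))=(D_{X}\alpha)(Y,\tau(Z))+\alpha(D_{X}Y,\tau(Z))+\alpha(Y,D_{X}(\tau Z))$ and likewise for the $Y$-term, and using $R^{D}_{X,Y}(\tau Z)=0$, reduces the identity — after the $\alpha(Y,D_{X}\tau Z)$-type and $D_{[X,Y]}$-type terms cancel against the connection acting on the $\tau(E)$-slot — precisely to
$$
(D_{X}\alpha)(Y,\tau(Z)) - (D_{Y}\alpha)(X,\tau(Z)) + \alpha(D_{X}Y - D_{Y}X - [X,Y],\tau(Z)) = 0,
$$
which is (\ref{ec}) since $T^{D}_{X}Y=D_{X}Y-D_{Y}X-[X,Y]$.

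The main obstacle I anticipate is bookkeeping: the vertical part of the bracket mixes a $\gamma$-\emph{dependent} piece (the curvature term $R^{D}_{X,Y}(\gamma)$, which is linear in the fiber coordinate) with $\gamma$-\emph{independent} pieces (the $1$-forms $D_{X}\eta-D_{Y}\xi$ and the "reference" tail $\eta,\xi$ themselves). To conclude cleanly one must argue that membership in $\widetilde{L}$ at \emph{every} point $\gamma\in T^{*}M$ separates into (a) a condition on the fiberwise-linear part, which is exactly $R^{D}_{X,Y}|_{\tau(E)}=0$, and (b) a condition on the constant part, which yields the $\alpha$-equation; this separation is what forces the two clauses of (ii) and then (iii) independently. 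A secondary subtlety is that $\widetilde{L}$ is defined via the pointwise description of Corollary \ref{common}, so one must check that the "extra" elements of the vertical fiber not of the form $i_{X}\alpha$-extensions — i.e.\ covectors vanishing on $\tau(E)$ — are exactly $\mathrm{Ann}(\tau(E))$, and that the curvature term lands there iff $R^{D}_{X,Y}(\tau E)=0$; this is where $E+\bar E=T^{\mathbb{C}}M$ (hence $\tau(E)+\overline{\tau(E)}=T^{\mathbb C}M$, so $\mathrm{Ann}(\tau E)\cap \overline{\mathrm{Ann}(\tau E)}=0$ is not needed but the dimension count $\dim\widetilde L=\dim_{\mathbb C}T^{*}M$ is) makes the frame above genuinely a frame. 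For the converse, one simply reverses these steps: assuming (i)–(iii), the computation above shows the bracket of any two frame sections of $\widetilde{L}$ lies in $\Gamma(\widetilde{L})$, and since the $\mathcal C^{\infty}(T^{*}M)$-module of sections of $\widetilde{L}$ is generated by such lifts together with the integrability of brackets with function multipliers (Leibniz rule, using $J^{\mathcal J,D}$-holomorphic functions are constant along fibers is not needed — only the module structure), closure under Lie bracket follows, hence integrability by Newlander–Nirenberg.
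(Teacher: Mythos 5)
Your proposal is correct and follows essentially the same route as the paper: compute the Lie bracket of basic sections $\tilde{X}+\xi$, $\tilde{Y}+\eta$ of $\pi^{*}L^{\tau}(E,\alpha)$ via the formulas (\ref{lie}), separate the horizontal, fiberwise-linear (curvature) and constant vertical parts, and use the arbitrariness of $\xi,\eta$ off $\tau(E)$ to force $D_{\Gamma(E)}\Gamma(\tau(E))\subset\Gamma(\tau(E))$ before the remaining identity collapses to (\ref{ec}) via the torsion. The bookkeeping you flag is exactly how the paper's argument proceeds, so no changes are needed.
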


\begin{proof}
We need to prove that the holomorphic bundle  $\pi^{*}L^{\tau}(E,
\alpha )\subset T^{\mathbb{C}} (T^{*}M)$ of $J^{\mathcal J , D}$
is involutive if and only if the conditions {\it i)}, {\it ii)}
and {\it iii)} from the statement of the theorem hold. For this,
we will compute the Lie brackets of basic sections of
$\pi^{*}L^{\tau}(E, \alpha )$. (By a basic section of 
$\pi^{*}L^{\tau}(E, \alpha )$ we mean  a vector field on the
cotangent manifold $T^{*}M$, of the form $\tilde{X} + \xi$, where
$X +\xi$ is a section of $L^{\tau} (E, \alpha )$). Therefore, let
$X+\xi , Y+\eta\in \Gamma (L^{\tau}(E, \alpha ))$. Then
\begin{equation}\label{epsilon-eta}
X, Y\in \Gamma (E), \quad \xi , \eta \in \Gamma
(T^{\mathbb{C}}M)^{*}, \quad \xi\vert_{\tau (E)} = i_{X}\alpha ,
\quad \eta\vert_{\tau (E)} = i_{Y}\alpha .
\end{equation}
From (\ref{lie}), at any $\gamma \in T^{*}M$,
\begin{equation}\label{Lie}
[\tilde{X}+\xi , \tilde{Y}+\eta ]_{\mathcal L}(\gamma ) =
{[X,Y]}^{\widetilde{}} (\gamma ) + R^{D}_{X,Y}(\gamma ) +
D_{X}\eta - D_{Y}\xi .
\end{equation}
We obtain that $[\tilde{X}+\xi , \tilde{Y}+\eta ]_{\mathcal L}$ is a section of
$\pi^{*}L^{\tau}(E, \alpha )$ if and only if
$$
[X,Y] + R^{D}_{X,Y}(\gamma ) + D_{X}\eta - D_{Y}\xi
$$
belongs to the fiber of $L^{\tau}(E, \alpha )$ at $\pi (\gamma )$,
for any $\gamma \in T^{*}M$, i.e.
\begin{equation}\label{D}
[X, Y]\in \Gamma (E), \quad R^{D}_{X,Y}(\gamma )\vert_{\tau (E)
}=0
\end{equation}
and
\begin{equation}\label{ajutatoare}
(D_{X}\eta  - D_{Y}\xi )(\tau (Z)) =\alpha ( [X,Y], \tau (Z)),
\quad \forall Z\in \Gamma (E).
\end{equation}
We now rewrite (\ref{ajutatoare}). From (\ref{epsilon-eta}), the
left hand side of (\ref{ajutatoare}) is equal to
$$
X\alpha (Y, \tau (Z)) - Y\alpha (X, \tau (Z)) - \eta (D_{X}(\tau
(Z))) +\xi (D_{Y}(\tau (Z)))
$$
and (\ref{ajutatoare}) becomes
\begin{equation}\label{ec-fund}
X\alpha (Y, \tau (Z)) - Y\alpha (X, \tau (Z)) - \eta (D_{X}(\tau
(Z))) +\xi (D_{Y}(\tau (Z))) =\alpha ( [X,Y], \tau (Z)),
\end{equation}
for any $Z\in \Gamma (E).$ From (\ref{epsilon-eta}) again,
$\xi\vert_{\tau (E)}= i_{X}\alpha$, but $\xi$ can take any values
on a complement of $\tau (E)$ in $T^{\mathbb{C}}M.$ Similarly, the
only obstruction on $\eta$ is $\eta\vert_{\tau (E)} = i_{Y}
\alpha$. Thus, if (\ref{ec-fund}) holds for any sections $X+\xi$
and $Y+\eta$ of $L^{\tau}(E, \alpha )$, then
$$
D_{X}(\tau (Z)) \in \Gamma (\tau (E)), \quad \forall X, Z\in
\Gamma (E)
$$
and relation (\ref{ec-fund}) becomes (\ref{ec}). We proved that
$\pi^{*}L^{\tau}(E, \alpha )$ is involutive if and only if
\begin{equation}\label{cond}
[ \Gamma (E), \Gamma (E)]\subset \Gamma (E), \quad
R^{D}\vert_{E\times E}\tau (E)=0, \quad D_{\Gamma (E)} \Gamma
(\tau (E)) \subset \Gamma (\tau (E))
\end{equation}
and relation (\ref{ec}) holds. Our claim follows.
\end{proof}

We end this section by relating  the Courant integrability of a
skew-symmetric generalized complex structure $\mathcal J$ with the
integrability of the almost complex structure $J^{\mathcal J ,
D}.$ This is a straightforward application of Theorem
\ref{main-thm}.

\begin{cor}\label{remarca} Let
$\mathcal J$ be a skew-symmetric generalized complex structure,
with holomorphic bundle $L(E, \alpha )$, and $D$ a linear
connection on $M$. Suppose that $E$ is involutive, $ D_{\Gamma (E)} \Gamma (E) \subset
\Gamma (E)$, $R^{D}_{E,E}E=0$ and the relation 
\begin{equation}\label{rel-2}
(D_{Z}\alpha ) (X,Y) + \alpha (T^{D}_{Z}X,Y) +\alpha
(X,T^{D}_{Z}Y)=0, \quad \forall X,Y, Z\in \Gamma (E)
\end{equation}
holds. Then $J^{\mathcal J , D}$ is integrable if and only if
$\mathcal J$ is Courant integrable.
\end{cor}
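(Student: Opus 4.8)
The plan is to apply Theorem \ref{main-thm} directly, exploiting that for a skew-symmetric generalized complex structure the conjugation map $\tau$ appearing in Corollary \ref{common} is the identity, so that $\tau(E) = E$ and $\alpha \in \Gamma(\Lambda^2 E^*)$. With this identification, conditions \textit{i)}, \textit{ii)} of Theorem \ref{main-thm} become exactly the standing hypotheses ``$E$ involutive, $D_{\Gamma(E)}\Gamma(E)\subset\Gamma(E)$, $R^D_{E,E}E=0$'' that we have assumed, so integrability of $J^{\mathcal J,D}$ reduces, \emph{given these hypotheses}, to condition \textit{iii)}, i.e. to equation (\ref{ec}):
\begin{equation*}
(D_X\alpha)(Y,Z) - (D_Y\alpha)(X,Z) + \alpha(T^D_XY,Z) = 0, \quad \forall X,Y,Z\in\Gamma(E).
\end{equation*}
So the corollary amounts to showing that, under the stated hypotheses, equation (\ref{ec}) is equivalent to the Courant integrability of $\mathcal J$, which by Proposition \ref{gualtieri-prop} (since $E$ is already assumed involutive) means $d_E\alpha = 0$.

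The core of the argument is therefore an algebraic identity relating the cyclic expression defining $d_E\alpha$ to the expressions in (\ref{ec}) and (\ref{rel-2}). First I would write out $(d_E\alpha)(X,Y,Z)$ using the formula in Proposition \ref{gualtieri-prop}, and separately write out the three instances of (\ref{ec}) obtained by cycling $(X,Y,Z)$, as well as the three instances of (\ref{rel-2}). The key bookkeeping step is to re-express the ``connection'' terms $X(\alpha(Y,Z))$ via the Leibniz rule $X(\alpha(Y,Z)) = (D_X\alpha)(Y,Z) + \alpha(D_XY,Z) + \alpha(Y,D_XZ)$, and then to substitute $D_XY - D_YX = [X,Y] + T^D_XY$ to convert the bracket terms $\alpha(X,[Y,Z])$ into covariant-derivative and torsion terms. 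After this substitution, a direct comparison should show that $(d_E\alpha)(X,Y,Z)$ is a fixed linear combination of the left-hand side of (\ref{ec}) (cycled) and the left-hand side of (\ref{rel-2}) (cycled). Since (\ref{rel-2}) is assumed to hold identically, this identity says precisely that $d_E\alpha = 0$ if and only if (\ref{ec}) holds, which is what we want. (One should note that for $E$-valued arguments $D_{\Gamma(E)}\Gamma(E)\subset\Gamma(E)$ guarantees all terms stay in $\Gamma(E)$, so the pairings with $\alpha$ and the exterior derivative $d_E$ make sense.)

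I expect the main obstacle to be purely the sign/combinatorial bookkeeping in this identity: getting the three cyclic copies of (\ref{ec}), the three of (\ref{rel-2}), and the six bracket/torsion terms in $d_E\alpha$ to line up with correct coefficients. There is no conceptual difficulty once Theorem \ref{main-thm} and Proposition \ref{gualtieri-prop} are in hand — it is a matter of carefully expanding and matching. A clean way to organize the computation is to define, for $X,Y,Z\in\Gamma(E)$, the quantity $A(X,Y,Z) := (D_X\alpha)(Y,Z) - (D_Y\alpha)(X,Z) + \alpha(T^D_XY,Z)$ from (\ref{ec}) and $B(X,Y,Z) := (D_X\alpha)(Y,Z) + \alpha(T^D_XY,Z) + \alpha(Y,T^D_XZ)$ from (\ref{rel-2}), and then verify the algebraic relation
\begin{equation*}
(d_E\alpha)(X,Y,Z) = A(X,Y,Z) + A(Y,Z,X) + A(Z,X,Y) - \big(B(X,Y,Z) + B(Y,Z,X) + B(Z,X,Y)\big)
\end{equation*}
(up to an overall sign, to be fixed during the computation). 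Granting (\ref{rel-2}), the last three terms vanish, so $d_E\alpha = 0 \iff A \equiv 0 \iff$ (\ref{ec}), and combining with the already-verified conditions \textit{i)}, \textit{ii)} of Theorem \ref{main-thm} we conclude that $J^{\mathcal J,D}$ is integrable if and only if $\mathcal J$ is Courant integrable.
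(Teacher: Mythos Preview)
Your overall strategy is correct and matches the paper's: reduce via Theorem \ref{main-thm} (with $\tau=\mathrm{Id}$) and Proposition \ref{gualtieri-prop}, so that the only thing to check is that (\ref{ec}) is equivalent to $d_E\alpha=0$ under the hypothesis (\ref{rel-2}).

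However, your cyclic-sum identity does not quite close the argument. With your notation, what one actually obtains is (up to a nonzero constant)
\[
A(X,Y,Z)+A(Y,Z,X)+A(Z,X,Y)=c\,(d_E\alpha)(X,Y,Z)\ -\ \bigl(B(Z,X,Y)+B(X,Y,Z)+B(Y,Z,X)\bigr),
\]
so granting (\ref{rel-2}) you get $\sum_{\mathrm{cyc}}A=c\,d_E\alpha$. From this, $A\equiv 0\Rightarrow d_E\alpha=0$ is fine, but the converse only yields $\sum_{\mathrm{cyc}}A=0$, i.e.\ a Bianchi-type identity for $A$, not $A\equiv 0$. Since $A$ is only skew in its first two arguments, the vanishing of its cyclic sum does not force $A$ to vanish (think of a curvature-type tensor). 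So the ``$d_E\alpha=0\Rightarrow(\ref{ec})$'' direction is not established by your identity.

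The fix is simple and is what the paper does: instead of cycling, prove the \emph{pointwise} identity
\[
A(X,Y,Z)=(d_E\alpha)(X,Y,Z)-B(Z,X,Y),
\]
i.e.\ identity (\ref{simple-ec}). This follows from exactly the Leibniz/torsion bookkeeping you describe, applied once rather than summed cyclically. Under (\ref{rel-2}) the $B$-term vanishes, giving $A=d_E\alpha$ on the nose, and hence (\ref{ec}) $\Leftrightarrow d_E\alpha=0$ as needed.
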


\begin{proof}
From Proposition \ref{gualtieri-prop} and Theorem \ref{main-thm}, we need to prove 
that $d_{E}\alpha =0$ is equivalent to  (\ref{ec}) (with $\tau :TM \rightarrow TM$ the
identity map). This is a consequence of (\ref{rel-2}) and the following general identity:
for any $2$-form $\beta$ and vector fields $X, Y, Z$, 
\begin{align}
\nonumber&(D_{X}\beta )(Y, Z) - (D_{Y}\beta )(X, Z) + \beta (
T^{D}_{X}Y,
Z)\\
\label{simple-ec}& = (d\beta )(X, Y, Z) - \left( (D_{Z}\beta )(X,
Y) + \beta (T^{D}_{Z}X, Y) + \beta (X, T^{D}_{Z}Y)\right) .
\end{align}

\end{proof}

\begin{exa}\label{Courant-Lie} {\rm Let $\mathcal J$ be a left-invariant skew-symmetric generalized
complex structure on a Lie group $G$ and $D^{c}$ the (flat)
left-invariant connection on $G$ given by $D^{c}_{X}Y= [X, Y]$,
for any left-invariant vector fields $X, Y.$ Then $D^{c}$
satisfies (\ref{rel-2}), for any left-invariant $2$-form $\alpha$.
We obtain that $\mathcal J$ is Courant integrable if and only if
$J^{\mathcal J , D^{c}}$ is integrable.}
\end{exa}

\section{Complex structures on cotangent manifolds of Lie
groups}\label{complex}

We begin by recalling basic facts we need about semisimple  Lie algebras.

\subsection{Semisimple Lie algebras}\label{semi}

Let $\gg^{\mathbb{C}}$ be a complex semisimple Lie algebra and
\begin{equation}\label{cartan}
\gg^{\mathbb{C}} = \gh + \gg (R) = \gh + \sum_{\alpha\in R}
\gg_{\alpha}
\end{equation}
a Cartan decomposition. We identify
$\gh$ with $\gh^{*}$, using the restriction of the Killing form $B$ of
$\gg^{\mathbb{C}}$ to $\gh$. By
means of this identification, we denote by
$\gh_{\mathbb{R}}\subset \gh$ the real span of the set of roots
$R\subset \gh^{*}$ of $\gg^{\mathbb{C}}$ relative to $\gh$ and by
$H_{\alpha}\in \gh_{\mathbb{R}}$ the vector which corresponds to
the root $\alpha \in R.$ Recall that a Weyl basis of the root part
$\gg (R):=\sum_{\alpha\in R} \gg_{\alpha}$  consists of root
vectors $\{E_{\alpha} , \alpha\in R\}$, satisfying the following
conditions:
$$
[ E_{\alpha}, E_{-\alpha }] = H_{\alpha},\quad B(E_{\alpha},
E_{-\alpha }) = 1,\quad N_{-\alpha , -\beta } = - N_{\alpha\beta},
\quad N_{\alpha\beta}\in \mathbb{R},
$$
where the structure constants $N_{\alpha\beta}$ are defined by
$$
[E_{\alpha}, E_{\beta }] = N_{\alpha\beta} E_{\alpha +\beta},
\quad \forall\alpha , \beta , \alpha  + \beta \in R.
$$
A simple argument which uses the Jacobi identity for $E_{\alpha}$,
$E_{\beta}$, $E_{\gamma}$ shows that for any $\alpha , \beta ,
\gamma \in R$, such that $\alpha + \beta + \gamma =0$,
\begin{equation}\label{rel-helg}
N_{\alpha\beta} = N_{\beta\gamma} = N_{\gamma\alpha}
\end{equation}
(see e.g. \cite{helg}, page 146).

Recall now that a real form of $\gg^{\mathbb{C}}$ is the fixed
point set of an antilinear involution
$$
\sigma : \gg^{\mathbb{C}} \rightarrow\gg^{\mathbb{C}}, \quad
x\rightarrow \sigma (x) = \bar{x},
$$
i.e. an automorphism of real Lie algebras, which is complex
antilinear and satisfies $\sigma^{2}= \mathrm{Id}$. We review,
following Theorem 6.88 of \cite{knapp}, the structure of such real
forms. The idea is that  $\gg$  is
determined (up to isomorphism) by its Vogan diagram, which is the
Dynkin diagram of $\gg^{\mathbb{C}}$ (representing a set of simple
roots $\Pi$ relative to a chosen Cartan subalgebra $\gh$) together with
two pieces of data: an involutive automorphism $\theta : \Pi
\rightarrow \Pi$ of the Dynkin diagram and some painted nodes, in
the fixed point set of $\theta .$ 
Chose a Weyl basis
$\{ E_{\alpha}\}$ of $\gg (R)$, where
$R= [\Pi ]$ is the set of roots of $\gg^{\mathbb{C}}$ relative to $\gh$.  The action of $\theta$ on $\Pi$ extends by linearity to
$\gh_{\mathbb{R}}^{*}\cong \gh_{\mathbb{R}}$ and this action
preserves $R$. The
antiinvolution $\sigma$ preserves $\gh$ and it acts on $R$ by
$$
\sigma : R \rightarrow R, \quad \sigma (\alpha ):=
\overline{\alpha \circ \sigma }.
$$
This action coincides, up to a minus sign, with the action of
$\theta$: $\sigma\vert_{R} = -\theta\vert_{R}$. On root vectors from the chosen Weyl basis, $\sigma$
acts as
\begin{equation}\label{action-sigma}
\sigma (E_{\alpha}) = -a_{\alpha}E_{\sigma (\alpha )},\quad \alpha
\in R,
\end{equation}
where $\{ a_{\alpha}, \alpha \in R\}$ (determined by the painted
nodes in the Vogan diagram) is a set of constants, satisfying
\begin{equation}\label{sim-1}
a_{\alpha} = a_{-\alpha} =a_{\sigma ( \alpha )}\in \{ \pm 1\},
\quad \forall \alpha \in R
\end{equation}
and
\begin{equation}\label{sim-2}
a_{\alpha +\beta} = - a_{\alpha}a_{\beta}N_{\alpha\beta}^{-1}
N_{\sigma (\alpha ) \sigma (\beta )},\quad \alpha , \beta ,\alpha
+\beta \in R.
\end{equation}
The real form $\gh_{\gg} = \gh^{\sigma}= \gh^{+} +\gh^{-}$,
where
\begin{equation}\label{cartan-real}
\gh^{+}:= \langle i(H_{\alpha} + H_{- \sigma (\alpha )}),\ \alpha
\in R\rangle , \quad \gh^{-}:= \langle H_{\alpha} + H_{\sigma
(\alpha )},\ \alpha \in R\rangle
\end{equation}
(the sign $\langle \cdots \rangle$ means the real span of the
respective vectors) is a Cartan subalgebra of $\gg$. 
Up to isomorphism, $\gg$ can be recovered from its Vogan diagram 
as
\begin{equation}\label{alg}
 \gg = (\gg^{\mathbb{C}})^{\sigma}  
= \gh_{\gg} +\sum_{\alpha\in R}  \mathbb{R} (E_{\alpha} - a_{\alpha}
 E_{\sigma (\alpha )})+\sum_{\alpha \in R} \mathbb{R} i (E_{\alpha} +
 a_{\alpha}E_{\sigma ( \alpha )}).
\end{equation}

\begin{rem}{\rm Since $\theta$ permutes $\Pi$, there is no root $\alpha\in R$ such
that $\sigma (\alpha ) =\alpha$. This means that $\gh_{\gg}$ is a
maximally compact Cartan subalgebra of $\gg$ (see Proposition 6.70
of \cite{knapp}). The real form  $\gg$ (and any Lie group $G$ with
Lie algebra $\gg$) is called of inner type if $\sigma ( \alpha )=
-\alpha$ for any $\alpha \in R$ (the automorphism $\theta$ of the
Vogan diagram is the identity). Any compact real form is of inner
type, with $a_{\alpha}=1$, for any $\alpha \in R.$ A real form
$\gg$ (and any Lie group $G$ with Lie algebra $\gg$) which is not
of inner type is called of outer type. For more details on real
semisimple Lie algebras, Vogan diagrams, maximally compact Cartan
subalgebras, see e.g. \cite{knapp}, Chapter VI.}
\end{rem}

\subsection{Admissible triples on Lie
groups}\label{semi1}

Let $G$ be a Lie group. We identify $T_{e}G$ with the space of
left-invariant vector fields on $G$ and with the Lie algebra $\gg$
of $G$,  in the usual way. The following definition encodes the
conditions from Theorem \ref{main-thm}, when $M=G$ and $\mathcal
J$, $D$ are left-invariant. Recall that a connection $D$ is left-invariant if
$D_{X}Y$ is left-invariant, when $X$ and $Y$ are so.

\begin{defn}\label{adm} A (symmetric, respectively skew-symmetric)
$\gg$-admissible triple is a triple $(\gk , \mathcal D , \epsilon  )$,
with the following
properties:

i) $\gk$ is a complex subalgebra of $\gg^{\mathbb{C}}$,  such that
$\gk + \bar{\gk} = \gg^{\mathbb{C}}$;

ii) $\mathcal D : \gg \times \gg \rightarrow \gg$, $(X, Y)
\rightarrow {\mathcal D}_{X}(Y)$,  is a bilinear map whose complex
linear extension satisfies
\begin{equation}\label{invers}
{\mathcal D}_{\gk} \tau ({\gk}) \subset\tau ({\gk})
\end{equation}
and
\begin{equation}\label{curv-alg}
R^{\mathcal D}_{X,Y}Z:= - \mathcal D_{X}\mathcal D_{Y}(Z) +
\mathcal D_{Y}\mathcal D_{X} (Z) + \mathcal D_{[X,Y]} (Z)=0, \quad
\forall X, Y\in \gk ,\quad \forall Z\in \tau (\gk ).
\end{equation}
iii) $\epsilon \in \gk^{*} \otimes\tau(\gk )^{*}$  satisfies
$$
\epsilon (X, \tau (Y)) + \tau ( \epsilon (Y, \tau (X)))=0,\quad \forall X, Y\in \gk
$$
and
\begin{equation}\label{epsilon-eqn}
\epsilon (X, \mathcal D_{Y}(\tau (Z))) - \epsilon (Y, \mathcal
D_{X} (\tau (Z ))) = \epsilon ( [X, Y], \tau (Z)), \quad \forall X,
Y, Z\in \gk .
\end{equation}
Moreover, $g_{\Delta}:= \mathrm{Im}(\epsilon\vert_{\Delta})$ is 
non-degenerate on $\Delta = (\gk \cap \bar{\gk})^{\sigma}$.

Above, the maps $\tau : \gg^{\mathbb{C}}\rightarrow \gg^{\mathbb{C}}$
and $\tau : \mathbb{C}\rightarrow
\mathbb{C}$ are both the standard conjugations, respectively both
the identity maps.

\end{defn}

The following correspondence holds and will play a key role in our
treatment from the next subsection.

\begin{prop}\label{cons}
There is a one to one correspondence between:

i) pairs $(\mathcal J , D)$ formed by a left-invariant (symmetric,
respectively skew-symmetric) generalized  complex structure
$\mathcal J$ and a left-invariant connection $D$ on 
$G$, such that the associated almost complex structure
$J^{\mathcal J , D}$ on
$T^{*}G$ is integrable;

ii)  (symmetric, respectively skew-symmetric)
$\gg$-admissible triples $(\gk ,
\mathcal D , \epsilon )$.

In this correspondence $\mathcal D$ is the restriction of $D$ to
the space of left-invariant vector fields, $\gk : = E_{e}$ and
$\epsilon := \alpha\vert_{\gk\times \tau(\gk )}$, where
$L^{\tau}(E, \alpha )$ is the holomorphic bundle of $\mathcal J .$

\end{prop}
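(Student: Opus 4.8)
The plan is to verify that the dictionary given in the statement — $\mathcal{D}$ being the restriction of $D$, $\gk := E_e$, and $\epsilon := \alpha|_{\gk\times\tau(\gk)}$ — actually defines a bijection by exhibiting both directions explicitly and checking that they are mutually inverse. Since everything is left-invariant, all the objects in play are determined by their value at the identity $e \in G$: a left-invariant generalized complex structure $\mathcal{J}$ on $G$ is the same thing as a linear (symmetric or skew-symmetric) generalized complex structure on $\gg = T_e G$, a left-invariant connection $D$ is the same thing as a bilinear map $\mathcal{D}:\gg\times\gg\to\gg$, and the holomorphic bundle $L^\tau(E,\alpha)$ is the left-translate of its fiber $L^\tau(\gk,\epsilon)$ at $e$. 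So the content of the proposition is entirely the translation of Theorem \ref{main-thm}, together with the structural result Corollary \ref{common}, into the left-invariant setting.

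First I would recall, via Corollary \ref{common} applied pointwise at $e$, that a left-invariant symmetric or skew-symmetric $\mathcal{J}$ corresponds bijectively to a pair $(\gk,\epsilon)$ where $\gk\subset\gg^{\mathbb{C}}$ is a complex subspace with $\gk+\bar\gk=\gg^{\mathbb{C}}$ and $\epsilon\in\gk^*\otimes\tau(\gk)^*$ is $\tau$-skew (condition \eqref{tau-2}) with $\mathrm{Im}(\epsilon|_\Delta)$ non-degenerate, $\Delta = (\gk\cap\bar\gk)^\sigma$; here $\gk = \pi_1(L^\tau(\gk,\epsilon)_e) = E_e$ and $\epsilon = \alpha|_{\gk\times\tau(\gk)}$. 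So the data $(\gk,\epsilon)$ in part iii) of Definition \ref{adm}, minus the differential equation \eqref{epsilon-eqn}, is exactly the data of a left-invariant $\mathcal{J}$ of the appropriate type, with no integrability assumed. Pairing this with an arbitrary bilinear $\mathcal{D}$ (= arbitrary left-invariant connection $D$) gives a bijection between arbitrary pairs $(\mathcal{J},D)$ of the stated type and triples $(\gk,\mathcal{D},\epsilon)$ satisfying only the algebraic (non-integrability) constraints.

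The remaining step is to show that, under this bijection, the integrability of $J^{\mathcal{J},D}$ is equivalent to the three conditions \eqref{invers}, \eqref{curv-alg}, \eqref{epsilon-eqn} that upgrade the raw data to a genuine $\gg$-admissible triple. This is where Theorem \ref{main-thm} does the work: it states $J^{\mathcal{J},D}$ is integrable iff (i) $E$ is involutive, (ii) $D_{\Gamma(E)}\Gamma(\tau(E))\subset\Gamma(\tau(E))$ and $R^D|_{E\times E}(\tau(E))=0$, and (iii) equation \eqref{ec} holds. In the left-invariant setting, evaluating at $e$ and using that $E$ is spanned by left-invariant sections: involutivity of $E$ is automatic once we additionally observe that $\gk$ must be a \emph{subalgebra} — indeed condition (i) of Theorem \ref{main-thm} forces $[\Gamma(E),\Gamma(E)]\subset\Gamma(E)$, which at $e$ for left-invariant sections says $[\gk,\gk]\subset\gk$, i.e. $\gk$ is a complex subalgebra, recovering part i) of Definition \ref{adm}; conversely if $\gk$ is a subalgebra then $E$ is involutive. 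Condition (ii) becomes, on left-invariant vectors, exactly \eqref{invers} and \eqref{curv-alg} (note $R^{\mathcal{D}}$ in \eqref{curv-alg} is precisely the restriction of $R^D$, since for left-invariant connections the curvature of left-invariant fields is left-invariant). Condition (iii), namely \eqref{ec} with the torsion term, must be matched against \eqref{epsilon-eqn}: here one uses that for a left-invariant connection the torsion is $T^D_X Y = \mathcal{D}_X Y - \mathcal{D}_Y X - [X,Y]$ on left-invariant fields, and that $X(\alpha(Y,\tau(Z)))=0$ when $X,Y,Z$ are left-invariant (so $\alpha$-values are constants); substituting into \eqref{ec} and expanding $(D_X\alpha)(Y,\tau(Z)) = X(\alpha(Y,\tau(Z))) - \alpha(D_X Y,\tau(Z)) - \alpha(Y, D_X(\tau(Z)))$ collapses \eqref{ec} to exactly $\epsilon([X,Y],\tau(Z)) = \epsilon(X,\mathcal{D}_Y(\tau(Z))) - \epsilon(Y,\mathcal{D}_X(\tau(Z)))$, which is \eqref{epsilon-eqn}. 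Finally the non-degeneracy clause ``$g_\Delta = \mathrm{Im}(\epsilon|_\Delta)$ non-degenerate on $\Delta = (\gk\cap\bar\gk)^\sigma$'' is just the restatement at $e$ of the non-degeneracy condition built into Corollary \ref{common}, so it is part of the data being an actual generalized complex structure and needs no separate argument.

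The main obstacle — really the only place demanding care rather than bookkeeping — is the torsion rearrangement showing \eqref{ec} $\Leftrightarrow$ \eqref{epsilon-eqn}: one must be scrupulous that the $X(\alpha(\cdot,\cdot))$ terms genuinely vanish (which relies on the left-invariance of $\alpha$, hence on $\epsilon$ being the constant extension) and that the torsion term $\alpha(T^D_X Y,\tau(Z))$ in \eqref{ec} combines with the two covariant-derivative terms to leave precisely the bracket term $\epsilon([X,Y],\tau(Z))$, with no residual $\mathcal{D}_X Y$ or $\mathcal{D}_Y X$ contributions — these cancel against the $-\alpha(D_X Y,\tau(Z))$ and $+\alpha(D_Y X,\tau(Z))$ pieces coming from expanding the covariant derivatives, using that $\mathcal{D}$ already maps $\gk\times\tau(\gk)\to\tau(\gk)$ (condition \eqref{invers}) so the inputs stay in the domain of $\epsilon$. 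Everything else is a direct transcription of Theorem \ref{main-thm} and Corollary \ref{common} from the manifold $G$ with left-invariant data to the Lie algebra $\gg$, together with the routine observation that left-invariance sets up the identifications $T_e G \cong \gg$, connections $\leftrightarrow$ bilinear maps, and left-invariant tensor fields $\leftrightarrow$ their values at $e$.
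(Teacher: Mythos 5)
Your proposal is correct and follows essentially the same route as the paper's (much terser) proof: translate each condition of Theorem \ref{main-thm} to the value at $e$ of the left-invariant data, the only nontrivial step being the equivalence of (\ref{ec}) with (\ref{epsilon-eqn}), which both you and the paper obtain by restricting to left-invariant $X,Y,Z$ so that $\alpha(Y,\tau(Z))$ is constant and the $\mathcal D_XY$, $\mathcal D_YX$ terms cancel against the torsion term. Your write-up simply makes explicit the bookkeeping (Corollary \ref{common} at $e$, involutivity of $E$ versus $\gk$ being a subalgebra, $R^{\mathcal D}=R^D$ on left-invariant fields) that the paper leaves to the reader.
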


\begin{proof}
Using the left-invariance of $E$ and 
$\alpha$,  one may check
that the conditions from Theorem \ref{main-thm}, on the
integrability of $J^{\mathcal J , D}$, become the conditions for
$(\gk , \mathcal D , \epsilon )$ to be a $\gg$-admissible triple.
For example, to prove the equivalence between (\ref{ec}) and
(\ref{epsilon-eqn}), we notice that (\ref{ec}) holds if and only
if it holds for any $X, Y, Z\in \Gamma (E)$ left-invariant, and
for such arguments,  $\alpha (Y, \tau (Z))$ and $\alpha (X, \tau (Z))$ are constant  (because $\alpha$
is left-invariant).
\end{proof}

\subsection{Regular admissible triples and regular generalized
complex structures}\label{semi2}

Here and until the end of Section \ref{complex} we 
fix a complex semisimple Lie algebra $\gg^{\mathbb{C}}$, a real form $\gg = (\gg^{\mathbb{C}})^{\sigma}$ 
given by (\ref{alg}), and a Lie group $G$ with Lie algebra $\gg .$ 
A (complex)
subalgebra $\gk \subset \gg^{\mathbb{C}}$ is called regular, if it is
normalized by the (maximally compact) Cartan subalgebra
$\gh_{\gg}$ of $\gg .$ It is known
(see e.g. \cite{vinberg}, Proposition 1.1, page 183) that such a subalgebra is
of the form
\begin{equation}\label{g-k}
\gk = \gh_{\gk} + \gg (R_{0}) = \gh_{\gk} + \sum_{\alpha \in
R_{0}}\gg_{\alpha}
\end{equation}
where $\gh_{\gk} = \gk \cap \gh$  and $R_{0}\subset R$ is a
closed subset of roots (i.e. if $\alpha , \beta \in R_{0}$ and
$\alpha +\beta\in R$, then $\alpha +\beta\in R_{0})$. Remark that
\begin{equation}\label{sf}
\bar{\gk} = \sigma (\gk ) =\bar{\gh}_{\gk} +  \sum_{\alpha \in R_{0}}\gg_{\sigma
(\alpha )}, \quad \gk \cap \bar{\gk} =  \gh_{\gk} \cap \bar{\gh}_{\gk} + 
 \sum_{\alpha \in R_{0}\cap \sigma (R_{0})}\gg_{\alpha }.
\end{equation}

\begin{defn}\label{regular-def} Let $\mathcal J$ be  a left-invariant
(symmetric or skew-symmetric) generalized complex structure
on $G$ and $L^{\tau }(\gk , \epsilon )$ the fiber at $e\in G$ of its
holomorphic bundle. Then $\mathcal J$ is called regular if $\gk$ is
a regular subalgebra of $\gg^{\mathbb{C}}$. Similarly, a
$\gg$-admissible triple $(\gk , \mathcal D , \epsilon )$ is called
regular if $\gk$ is a regular subalgebra of $\gg^{\mathbb{C}}.$
\end{defn}

We need to recall the notions of $\sigma$-parabolic and $\sigma$-positive systems \cite{liana}. 
They reduce, when $\gg$ is of inner type, to the usual notions of
parabolic and positive root systems, respectively.

\begin{defn}\label{def-sigma} A closed set of roots $R_{0}\subset R$ 
is called a $\sigma$-parabolic system, if $R_{0}\cup \sigma (R_{0}) =R.$
If, moreover,  $R_{0} \cap \sigma (R_{0}) = \emptyset$, then
$R_{0}$ is called a $\sigma$-positive system.\end{defn}

The following simple lemma holds.

\begin{lem}\label{simple-aj} If a regular subalgebra $\gk$ as in (\ref{g-k})
belongs to a $\gg$-admissible triple, then its root part $R_{0}$
is a $\sigma$-parabolic system and its Cartan part $\gh_{\gk}$
satisfies $\gh_{\gk} +\bar{\gh}_{\gk} = \gh .$ If, moreover,
$R_{0}$ is a $\sigma$-positive system, then $\gk \cap \bar{\gk }=
\gh_{\gk} \cap \bar{\gh}_{\gk}.$

\end{lem}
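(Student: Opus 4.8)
\textbf{Proof proposal for Lemma \ref{simple-aj}.}

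The plan is to unwind the defining properties of a $\gg$-admissible triple $(\gk,\mathcal D,\epsilon)$ from Definition \ref{adm}, using the explicit description (\ref{sf}) of $\bar\gk$, $\gk\cap\bar\gk$ for a regular subalgebra. First I would prove the Cartan-part statement $\gh_\gk+\bar\gh_\gk=\gh$: condition i) of Definition \ref{adm} requires $\gk+\bar\gk=\gg^{\mathbb C}$, and projecting this equality onto the $\gh$-summand of the root-space decomposition (\ref{cartan}) — which is legitimate since, by (\ref{g-k}) and (\ref{sf}), $\gk$ and $\bar\gk$ are each sums of $\gh$-components and root spaces — gives exactly $\gh_\gk+\bar\gh_\gk=\gh$. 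Next, for the $\sigma$-parabolic claim, I compare the root-space parts: from (\ref{g-k}) the root spaces occurring in $\gk$ are $\{\gg_\alpha:\alpha\in R_0\}$ and from (\ref{sf}) those in $\bar\gk$ are $\{\gg_\alpha:\alpha\in\sigma(R_0)\}$, so $\gk+\bar\gk=\gg^{\mathbb C}$ forces every $\gg_\alpha$, $\alpha\in R$, to appear, i.e. $R_0\cup\sigma(R_0)=R$. That is precisely the definition of a $\sigma$-parabolic system (Definition \ref{def-sigma}). Note that so far only property i) of the admissible triple has been used.

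For the last assertion, assume in addition that $R_0$ is $\sigma$-positive, i.e. $R_0\cap\sigma(R_0)=\emptyset$. Then the root-space part of $\gk\cap\bar\gk$ in (\ref{sf}) is the sum over $R_0\cap\sigma(R_0)=\emptyset$, hence empty, and (\ref{sf}) collapses to $\gk\cap\bar\gk=\gh_\gk\cap\bar\gh_\gk$, as claimed. This direction is purely a set-theoretic consequence of (\ref{sf}) and needs no further input from the admissible-triple axioms.

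The only mild subtlety — and the step I would be most careful with — is the justification that $\gk+\bar\gk=\gg^{\mathbb C}$ can be read off summand-by-summand in the decomposition $\gg^{\mathbb C}=\gh\oplus\bigoplus_{\alpha\in R}\gg_\alpha$. This is exactly where regularity of $\gk$ is essential: because $\gk$ is normalized by $\gh_\gg$, it is $\gh_\gg$-stable, and the cited structure result (\cite{vinberg}, Prop.\ 1.1) gives the graded form (\ref{g-k}); applying $\sigma$ yields the graded form of $\bar\gk$ in (\ref{sf}); so both subspaces are homogeneous with respect to the $\gh$-weight decomposition and their sum is computed componentwise. Everything else is immediate, so no genuine obstacle arises — the content of the lemma is really just bookkeeping with (\ref{g-k}) and (\ref{sf}) together with axiom i) of Definition \ref{adm}.
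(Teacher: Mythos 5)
Your proposal is correct and follows essentially the same route as the paper, which simply invokes $\gk+\bar{\gk}=\gg^{\mathbb{C}}$ from Definition \ref{adm} together with the graded descriptions (\ref{g-k}) and (\ref{sf}) and reads off the conclusion componentwise. Your extra care about why the sum can be computed summand-by-summand (both $\gk$ and $\bar{\gk}$ being $\gh$-weight homogeneous by regularity) is exactly the implicit justification behind the paper's one-line argument.
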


\begin{proof}
From the definition of $\gg$-admissible triples, $\gk + \bar{\gk}
= \gh$. This relation, together with (\ref{sf}), implies the
statement of the lemma.
\end{proof}

\subsection{Complex structures on $T^{*}G$}\label{complex-main}

Our aim in this section is to define a natural left-invariant
connection $D^{0}$ on $G$ and to determine all regular symmetric
generalized complex structures $\mathcal J$, with the property
that the almost complex structure $J^{\mathcal J, D^{0}}$ on
$T^{*}G$ is integrable, or, equivalently, the associated triple
$(\gk , \mathcal D^{0} , \epsilon )$ is $\gg$-admissible (and
regular, symmetric). From definition, a bilinear map $\mathcal D :
\gg\times \gg \rightarrow \gg$ can belong to a symmetric
$\gg$-admissible triple $(\gk , \mathcal D , \epsilon )$ only if
its complex linear extension $\mathcal D : \gg^{\mathbb{C}}
\times\gg^{\mathbb{C}} \rightarrow \gg^{\mathbb{C}}$ satisfies
(\ref{invers}) (with $\tau = \sigma$, hence $\tau (\gk ) = \bar{\gk}$)
and (\ref{curv-alg}). Recall now that $\gk$ is of the
form (\ref{g-k}) and $\bar{\gk}$ of the form (\ref{sf}). From
these relations, it is immediate that if
\begin{equation}\label{cond-c}
\mathcal D_{\gg_{\alpha}} (\gg_{\beta}) \subset \gg_{\sigma (\alpha )
+\beta}, \quad \mathcal D_{\gh}(\gg_{\beta}) \subset \gg_{\beta}, \quad
\mathcal D_{\gg_{\beta}}(\gh ) \subset \gg_{\sigma (\beta )}, \quad
\mathcal D_{\gh} (\gh )=0,
\end{equation}
for any $\alpha , \beta \in R$ (with $\gg_{\sigma (\alpha ) +\beta}:=0$ if 
$\alpha +\sigma (\beta )\notin R$),  then is  (\ref{invers}) is satisfied. 
A map whose complex linear
extension satisfies (\ref{cond-c}) and (\ref{curv-alg}) is
provided by the following lemma.

\begin{lem}\label{connection} Let $\mathcal D^{0}: \gg^{\mathbb{C}}\times
\gg^{\mathbb{C}}\rightarrow \gg^{\mathbb{C}}$ be a complex
bilinear map given by
\begin{align*}
\mathcal D^{0}_{E_{\alpha}}(E_{\beta} )&=- a_{\alpha} [E_{\sigma
(\alpha ) }, E_{\beta}],
\quad \mathcal D^{0}_{H}(E_{\beta})=\sigma (\beta) (H)E_{\beta},\\
\mathcal D^{0}_{E_{\beta}}(H )&= \sigma (\beta) (H)a_{\beta}
E_{\sigma (\beta )}, \quad \mathcal D^{0}_{H}(\tilde{H})=0
\end{align*}
for any $\alpha , \beta \in R$ and $H, \tilde{H}\in \gh$. Then
$\mathcal D^{0}$ satisfies
\begin{equation}\label{properties-req}
\mathcal D^{0}_{\gk } (\bar{\gk}) \subset \bar{\gk},\ \mathcal
D^{0}_{\gg} ( \gg ) \subset\gg,\ R^{\mathcal D^{0}}=0.
\end{equation}
\end{lem}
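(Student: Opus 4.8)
The plan is to verify the three properties in \eqref{properties-req} by direct computation using the defining formulas for $\mathcal D^{0}$, the structure theory of $\gg^{\mathbb C}$ recalled above, and the action \eqref{action-sigma} of $\sigma$ on root vectors together with its companion identities \eqref{sim-1}, \eqref{sim-2}. The first inclusion $\mathcal D^{0}_{\gk}(\bar\gk)\subset\bar\gk$ is essentially formal: one checks that $\mathcal D^{0}$ satisfies the four bracket-type relations \eqref{cond-c}. Indeed, $\mathcal D^{0}_{E_\alpha}(E_\beta)=-a_\alpha[E_{\sigma(\alpha)},E_\beta]$ lies in $\gg_{\sigma(\alpha)+\beta}$ (or is zero) since $[E_{\sigma(\alpha)},E_\beta]\in\gg_{\sigma(\alpha)+\beta}$; $\mathcal D^{0}_H(E_\beta)=\sigma(\beta)(H)E_\beta\in\gg_\beta$; $\mathcal D^{0}_{E_\beta}(H)=\sigma(\beta)(H)a_\beta E_{\sigma(\beta)}\in\gg_{\sigma(\beta)}$; and $\mathcal D^{0}_H(\tilde H)=0$. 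As noted just before the lemma, \eqref{cond-c} implies \eqref{invers} with $\tau=\sigma$, i.e.\ $\mathcal D^{0}_\gk(\bar\gk)\subset\bar\gk$, using the explicit shapes \eqref{g-k} of $\gk$ and \eqref{sf} of $\bar\gk$.

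The second property, that $\mathcal D^{0}$ preserves the real form $\gg$, I would prove by showing $\mathcal D^{0}$ commutes with $\sigma$, i.e.\ $\sigma(\mathcal D^{0}_X Y)=\mathcal D^{0}_{\sigma X}(\sigma Y)$ for all $X,Y\in\gg^{\mathbb C}$; since $\gg=(\gg^{\mathbb C})^\sigma$, this is equivalent to $\mathcal D^{0}_\gg(\gg)\subset\gg$. By bilinearity it suffices to check this on the basis vectors $E_\alpha$ and $H\in\gh$. For the $\mathcal D^{0}_{E_\alpha}(E_\beta)$ term one computes $\sigma\big(-a_\alpha[E_{\sigma(\alpha)},E_\beta]\big)=-a_\alpha[\sigma E_{\sigma(\alpha)},\sigma E_\beta]=-a_\alpha(-a_{\sigma(\alpha)}E_\alpha,-a_\beta E_{\sigma(\beta)})$-bracket $=-a_\alpha a_{\sigma(\alpha)}a_\beta[E_\alpha,E_{\sigma(\beta)}]$, and using $a_{\sigma(\alpha)}=a_\alpha$ and $a_\alpha^2=1$ from \eqref{sim-1} this equals $-a_\beta[E_\alpha,E_{\sigma(\beta)}]=\mathcal D^{0}_{E_{\sigma(\beta)}}(E_\alpha)$ evaluated at... here I must be careful: I want $\mathcal D^{0}_{\sigma(E_\alpha)}(\sigma(E_\beta))=\mathcal D^{0}_{-a_\alpha E_{\sigma(\alpha)}}(-a_\beta E_{\sigma(\beta)})=a_\alpha a_\beta\,\mathcal D^{0}_{E_{\sigma(\alpha)}}(E_{\sigma(\beta)})=a_\alpha a_\beta\big(-a_{\sigma(\alpha)}[E_{\sigma(\sigma(\alpha))},E_{\sigma(\beta)}]\big)=-a_\beta[E_\alpha,E_{\sigma(\beta)}]$, which matches $\sigma(\mathcal D^{0}_{E_\alpha}E_\beta)$ computed above. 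The remaining three cases (one argument in $\gh$) are analogous, using $\sigma(H)\in\gh$, the reality $\overline{\sigma(\beta)(H)}=\sigma(\beta)(\sigma H)$ of the pairing, and once more $a_{\sigma(\beta)}=a_\beta$.

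The third and most substantial property is $R^{\mathcal D^{0}}=0$, i.e.\ $-\mathcal D^{0}_X\mathcal D^{0}_Y+\mathcal D^{0}_Y\mathcal D^{0}_X+\mathcal D^{0}_{[X,Y]}=0$ as an operator on $\gg^{\mathbb C}$, for all $X,Y$. This is where the real work lies. I would reduce to checking flatness on basis elements: $X,Y$ each running over $\{E_\alpha\}\cup\{H\}$ and the operators applied to $E_\gamma$ or to $\tilde H\in\gh$. The natural way to organize this is to observe that $\mathcal D^{0}$ is manufactured to be conjugate, under the ``twist'' $E_\alpha\mapsto a_\alpha E_{\sigma(\alpha)}$, $H\mapsto H$, to the left-invariant flat connection built from $\mathrm{ad}$: more precisely $\mathcal D^{0}_X Y = \Phi^{-1}\big([\Phi(X),Y]\big)$-type formula, or better, $\mathcal D^{0}_X$ acts as $\mathrm{ad}$ of a $\sigma$-twisted version of $X$. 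Concretely, define the linear map $S:\gg^{\mathbb C}\to\gg^{\mathbb C}$ by $S(E_\alpha)=-a_\alpha E_{\sigma(\alpha)}=\sigma(E_\alpha)$ on root vectors and $S(H)=$ (something like $\sigma(\cdot)$ evaluated appropriately) on $\gh$; then the four formulas say $\mathcal D^{0}_X = \mathrm{ad}(S'X)$ for a suitable $S'$, whence $R^{\mathcal D^{0}}_{X,Y}=[\mathrm{ad}(S'X),\mathrm{ad}(S'Y)]-\mathrm{ad}(S'[X,Y])=\mathrm{ad}\big([S'X,S'Y]-S'[X,Y]\big)$, so flatness amounts to $S'$ being a Lie-algebra homomorphism (an automorphism), which $\sigma$-type twists are. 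The main obstacle is pinning down the exact twist map on the Cartan subalgebra so that all four defining formulas \eqref{cond-c}-style fit a single ``$\mathcal D^{0}_X=\mathrm{ad}(\text{twist of }X)$'' pattern, and then confirming the homomorphism identity using \eqref{rel-helg}, \eqref{sim-2}, and the Jacobi identity; once that algebraic identity is in hand, vanishing of $R^{\mathcal D^{0}}$ follows immediately from $\mathrm{ad}$ being a Lie algebra representation. If a clean global twist is not available, the fallback is the brute-force case analysis over the (finitely many) types of triples of basis vectors, each case reducing to \eqref{rel-helg} or \eqref{sim-2}; I would present the twist argument as the conceptual proof and relegate the bracket bookkeeping to a remark.
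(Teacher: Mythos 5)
Your argument is correct, and for the two substantive assertions it takes a genuinely different route from the paper. For $\mathcal D^{0}_{\gg}(\gg)\subset\gg$ the paper computes $\mathcal D^{0}$ explicitly on the real basis $A_{\alpha}=E_{\alpha}-a_{\alpha}E_{\sigma(\alpha)}$, $B_{\alpha}=i(E_{\alpha}+a_{\alpha}E_{\sigma(\alpha)})$ and on $\gh^{\pm}$, and observes that every output is manifestly real; your $\sigma$-equivariance argument ($\sigma(\mathcal D^{0}_{X}Y)=\mathcal D^{0}_{\sigma X}(\sigma Y)$ checked on the complex basis, both sides being antilinear in each slot) replaces that page of formulas by four one-line verifications and is the cleaner proof. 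One small correction: the identity you invoke for the Cartan cases should read $\overline{\sigma(\beta)(H)}=\beta(\sigma(H))$ (immediate from $\sigma(\beta)=\overline{\beta\circ\sigma}$), not $\overline{\sigma(\beta)(H)}=\sigma(\beta)(\sigma H)$; the latter would assert $\beta(\sigma H)=\overline{\beta(H)}$, which is false in general, but the cases go through with the correct identity. For flatness the paper offers only ``straightforward to check,'' so your $\mathrm{ad}$-twist idea is a genuine addition, and the twist you are looking for does exist: take $S'$ complex linear with $S'(E_{\alpha})=\sigma(E_{\alpha})=-a_{\alpha}E_{\sigma(\alpha)}$ and $S'\vert_{\gh}$ the $\mathbb{C}$-linear map dual to $\beta\mapsto\sigma(\beta)$ (equivalently $H_{\alpha}\mapsto H_{\sigma(\alpha)}$). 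Then all four defining formulas read $\mathcal D^{0}_{X}=\mathrm{ad}(S'X)$ — e.g. $[S'(H),E_{\beta}]=\beta(S'H)E_{\beta}=\sigma(\beta)(H)E_{\beta}$ and $[S'(E_{\beta}),H]=a_{\beta}\sigma(\beta)(H)E_{\sigma(\beta)}$ — so $R^{\mathcal D^{0}}_{X,Y}=\mathrm{ad}\bigl(S'[X,Y]-[S'X,S'Y]\bigr)$, and injectivity of $\mathrm{ad}$ on the semisimple $\gg^{\mathbb{C}}$ reduces flatness to $S'$ being a homomorphism. That $S'$ is an automorphism can be seen either by noting $S'=\sigma\circ\mu$, where $\mu$ is the antilinear conjugation fixing the split real form $\gh_{\mathbb{R}}+\sum_{\alpha}\mathbb{R}E_{\alpha}$ determined by the Weyl basis (a composition of two antilinear automorphisms), or directly: the identity $[S'E_{\alpha},S'E_{\beta}]=S'[E_{\alpha},E_{\beta}]$ is exactly (\ref{sim-2}), as you anticipated. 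The part $\mathcal D^{0}_{\gk}(\bar{\gk})\subset\bar{\gk}$ via (\ref{cond-c}) is the same as in the paper.
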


\begin{proof}
We already explained that $\mathcal D^{0}_{\gk } (\bar{\gk})
\subset \bar{\gk}$. We now prove $\mathcal D^{0}_{\gg} ( \gg )
\subset\gg .$ For any $\alpha \in R$, let
$A_{\alpha}:= E_{\alpha} -a_{\alpha} E_{\sigma ( \alpha)}$ and
$B_{\alpha}:= i (E_{\alpha}+ a_{\alpha}E_{\sigma (\alpha)})$. By  a
straightforward computation, which uses (\ref{sim-1}), we obtain:
\begin{align*}
\mathcal D^{0}_{A_{\alpha}}(A_{\beta})&=- a_{\alpha} \left([
E_{\sigma (\alpha )}, E_{\beta}] + a_{\alpha}a_{\beta}
[E_{\alpha}, E_{\sigma (\beta )}] \right) + \left( [E_{\alpha},
E_{\beta}] +a_{\alpha}a_{\beta}[E_{\sigma (\alpha )}, E_{\sigma
(\beta )}]\right) \\
\mathcal D^{0}_{B_{\alpha}}(B_{\beta})&=\left( [E_{\alpha}, E_{\beta}]
+a_{\alpha}a_{\beta}[E_{\sigma (\alpha )}, E_{\sigma (\beta )}]\right)
+a_{\alpha}\left( [ E_{\sigma (\alpha )}, E_{\beta}] +
a_{\alpha}a_{\beta} [E_{\alpha}, E_{\sigma (\beta )}]\right)\\
\mathcal D^{0}_{A_{\alpha}}(B_{\beta})&= i \left([E_{\alpha},
E_{\beta}] -a_{\alpha}a_{\beta}[E_{\sigma (\alpha )}, E_{\sigma
(\beta )}]\right) +a_{\beta}i\left( [E_{\alpha}, E_{\sigma (\beta
)}] -a_{\alpha}a_{\beta }[E_{\sigma (\alpha )}, E_{\beta
}]\right)\\
\mathcal D^{0}_{B_{\alpha}}(A_{\beta})&= -i \left( [E_{\alpha},
E_{\beta}] -a_{\alpha}a_{\beta}[E_{\sigma (\alpha )}, E_{\sigma
(\beta )}]\right) - ia_{\alpha}\left( [E_{\sigma (\alpha )},
E_{\beta }]- a_{\alpha}a_{\beta}[E_{\alpha}, E_{\sigma (\beta
)}]\right).
\end{align*}
Moreover, for any $\alpha ,
\beta \in R$ and $H\in \gh^{+}$,
\begin{align*}
\mathcal D^{0}_{A_{\alpha}}(H)= i\alpha (H)a_{\alpha}B_{\sigma
(\alpha) }, \quad
\mathcal D^{0}_{B_{\alpha}}(H) = i\alpha (H) A_{\alpha}\\
\mathcal D^{0}_{H}(A_{\alpha}) =i\alpha (H)B_{\alpha},\quad
\mathcal D^{0}_{H}(B_{\alpha})= -i\alpha (H)A_{\alpha},
\end{align*}
while for any $\alpha , \beta \in R$ and  $H\in \gh^{-}$,
\begin{align*}
\mathcal D^{0}_{A_{\alpha}}(H)= \alpha (H)a_{\alpha}A_{\sigma
(\alpha) }, \quad
\mathcal D^{0}_{B_{\alpha}}(H) = \alpha (H) B_{\alpha}\\
\mathcal D^{0}_{H}(A_{\alpha}) =\alpha (H)A_{\alpha},\quad
\mathcal D^{0}_{H}(B_{\alpha})= \alpha (H)B_{\alpha}.\\
\end{align*}
For any $\alpha , \beta \in R$, the expressions 
$$[E_{\alpha} , E_{\beta}] +a_{\alpha}a_{\beta}[E_{\sigma (\alpha )},
E_{\sigma (\beta )}], \quad i \left( [E_{\alpha} , E_{\beta}] -a_{\alpha}a_{\beta}[E_{\sigma (\alpha )},
E_{\sigma (\beta )}]\right) 
$$ 
belong to $\gg$ and $a_{\alpha} = a_{\sigma (\alpha )}\in \{\pm 1\}$.   Moreover, any root takes real values on
$\gh^{-}$ and purely imaginary values on $\gh^{+}$. Therefore,  the above
computations show that $\mathcal D^{0}_{\gg } (\gg )\subset \gg$,
as required.  It is straightforward to check, using the definition
of ${\mathcal D^{0}}$, that $R^{\mathcal D^{0}} =0$.  Our claim follows.
\end{proof}

The preferred connection we were looking for is defined as follows.

\begin{defn}\label{def-d-0} The connection $D^{0}$ is the unique (flat) left-invariant connection on $G$ which on left-invariant vector fields 
coincides with the map $\mathcal D^{0}$ from Lemma \ref{connection}.
\end{defn}

With the above preliminary considerations, we can now state our main result from this section. Below 
we denote by $\{ \omega_{\alpha}\in
(\gg^{\mathbb{C}})^{*}, \alpha \in R \}$ the covectors defined by
$\omega_{\alpha}(E_{\beta}) = \delta_{\alpha\beta}$ for any
$\alpha , \beta\in R$ and $\omega_{\alpha}\vert_{\gh}=0.$ We 
use the notation $R_{0}^{\mathrm{sym}} := R_{0}\cap  (- R_{0})$
for the symmetric part of $R_{0}.$

\begin{thm}\label{main-applic} Consider a triple $(\gk , \mathcal D^{0},
\epsilon )$, with $\gk$ the regular subalgebra (\ref{g-k}),
$\mathcal D^{0}$  as in Lemma \ref{connection} and $\epsilon
\in \gk^{*}\otimes \bar{\gk}^{*}$ skew-Hermitian. Assume that
\begin{equation}\label{assumption-not}
(\alpha +\beta)\vert_{\gh_{\gk}}\neq 0, \quad  \forall\alpha ,
\beta \in R_{0}\cup \{ 0\}, \quad \alpha +\beta \neq 0.
\end{equation}
Then $(\gk , \mathcal D^{0}, \epsilon )$ is a (symmetric)
$\gg$-admissible triple  (and 
the associated pair $(\mathcal J , D^{0})$ defines a complex structure 
$J^{\mathcal J , D^{0}}$ on $T^{*}G$) if and
only if the following conditions hold:

i)  the root system $R_{0}$ of $\gk$ is a $\sigma$-parabolic
system (see Definition \ref{def-sigma}) and the Cartan part
satisfies $\gh_{\gk } +
\bar{\gh}_{\gk} = \gh$;

ii)  the skew-Hermitian $2$-form $\epsilon \in \gk^{*} \otimes
\bar{\gk}^{*}$ is given by
\begin{align}
\nonumber&\epsilon = \epsilon_{0} + \sum_{\alpha\in R_{0}}
\mu_{\alpha} ( \alpha \otimes \omega_{\sigma(\alpha )}+ a_{\alpha}
\omega_{\alpha}\otimes\sigma (\alpha ) )\\
\nonumber& - \sum_{\alpha , \beta , \alpha +\beta\in
R_{0}}a_{\alpha}\mu_{\alpha +\beta}N_{\sigma(\alpha )\sigma(\beta
)} \omega_{\alpha}\otimes
\omega_{\sigma(\beta )}\\
\label{exp}& + \sum_{\gamma\in R_{0}^{\mathrm{sym}}} \nu_{\gamma}
\omega_{\gamma} \otimes \omega_{-\sigma (\gamma )}
\end{align}
where $\epsilon_{0} \in \gh_{\gk}^{*}\otimes \bar{\gh}_{\gk}^{*}$
is skew-Hermitian (trivially extended to $\gk$), $\mu_{\alpha},
\nu_{\gamma}$ ($\alpha \in R_{0}$, $\gamma \in
R^{\mathrm{sym}}_{0}$) are any real constants, such that the
$\nu_{\alpha}$'s  satisfy
\begin{equation}\label{ad-nu}
\nu_{\alpha} + \nu_{-\alpha } =0, \quad \forall \alpha \in
R_{0}^{\mathrm{sym}}
\end{equation}
and, for any $\alpha , \beta , \gamma\in R^{\mathrm{sym}}_{0}$, with $\alpha + \beta +\gamma =0$,
\begin{equation}\label{suplimentara}
a_{\alpha}\nu_{\alpha} +a_{\beta}\nu_{\beta}
+a_{\gamma}\nu_{\gamma} =0.
\end{equation}

iii) The pseudo-Riemannian metric $g_{\Delta}:=
\mathrm{Im}(\epsilon \vert_{\gk \cap \gg}) $ is non-degenerate and
\begin{equation}\label{e3}
\epsilon_{0} (H, H_{\sigma (\alpha )}) = 0, \quad \forall H\in
\gh_{\gk}, \quad \forall \alpha \in R_{0}^{\mathrm{sym}}.
\end{equation}

\end{thm}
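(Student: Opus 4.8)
The plan is to verify, one by one, the three conditions in the definition of a symmetric $\gg$-admissible triple (Definition \ref{adm}) for the triple $(\gk,\mathcal D^{0},\epsilon)$, using the explicit form of $\mathcal D^{0}$ from Lemma \ref{connection} and the ansatz (\ref{exp}) for $\epsilon$. By Lemma \ref{connection} we already know $\mathcal D^{0}_{\gk}(\bar\gk)\subset\bar\gk$ and $R^{\mathcal D^{0}}=0$, so condition {\it ii)} of Definition \ref{adm} holds automatically for \emph{any} regular $\gk$; thus the content of the theorem is entirely about conditions {\it i)}, {\it iii)} and the ``$\epsilon$-equation'' (\ref{epsilon-eqn}). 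First I would dispose of the easy implications: if $(\gk,\mathcal D^{0},\epsilon)$ is $\gg$-admissible then $\gk+\bar\gk=\gg^{\mathbb C}$, and Lemma \ref{simple-aj} immediately gives that $R_{0}$ is $\sigma$-parabolic and $\gh_{\gk}+\bar\gh_{\gk}=\gh$, which is {\it i)}; the non-degeneracy of $g_{\Delta}$ is part of the definition, so only the special vanishing (\ref{e3}) and the precise shape (\ref{exp}) of $\epsilon$ remain to be extracted.

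The heart of the argument is to rewrite the $\epsilon$-equation (\ref{epsilon-eqn}),
$$
\epsilon(X,\mathcal D^{0}_{Y}(\sigma(Z)))-\epsilon(Y,\mathcal D^{0}_{X}(\sigma(Z)))=\epsilon([X,Y],\sigma(Z)),\qquad X,Y,Z\in\gk,
$$
on root vectors. I would plug in $X=E_{\alpha}$ or $H\in\gh_{\gk}$, $Y=E_{\beta}$ or $H'\in\gh_{\gk}$, and $Z=E_{\gamma}$ or $H''\in\gh_{\gk}$ (all the relevant roots lying in $R_{0}$), and use the formulas $\mathcal D^{0}_{E_{\alpha}}(E_{\beta})=-a_{\alpha}[E_{\sigma(\alpha)},E_{\beta}]$, $\mathcal D^{0}_{H}(E_{\beta})=\sigma(\beta)(H)E_{\beta}$, $\mathcal D^{0}_{E_{\beta}}(H)=\sigma(\beta)(H)a_{\beta}E_{\sigma(\beta)}$ together with $\sigma(E_{\gamma})=-a_{\gamma}E_{\sigma(\gamma)}$ and the Weyl-basis relations (\ref{rel-helg}), (\ref{sim-1}), (\ref{sim-2}). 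This turns (\ref{epsilon-eqn}) into a finite system of scalar equations for the components of $\epsilon$ with respect to the basis $\{H^{*}\otimes H'^{*},\,\alpha\otimes\omega_{\sigma(\beta)},\,\omega_{\alpha}\otimes H'^{*},\,\omega_{\alpha}\otimes\omega_{\sigma(\beta)}\}$ of $\gk^{*}\otimes\bar\gk^{*}$. Solving this system is what produces (\ref{exp}): the $\gh_{\gk}\otimes\bar\gh_{\gk}$-block is forced to satisfy (\ref{e3}) (this is where assumption (\ref{assumption-not}) is used, to conclude that various $(\alpha+\beta)\vert_{\gh_{\gk}}\neq0$ coefficients can be divided out), the mixed blocks are governed by a single family of real parameters $\mu_{\alpha}$, and the $\omega_{\alpha}\otimes\omega_{\sigma(\beta)}$-block splits into a part determined by the $\mu$'s through the structure constants $N_{\sigma(\alpha)\sigma(\beta)}$ and a ``free'' part supported on $R_{0}^{\mathrm{sym}}$, parametrized by $\nu_{\gamma}$; the Jacobi-type identity forced on the latter is exactly (\ref{suplimentara}), and the constraint (\ref{ad-nu}) together with skew-Hermiticity (\ref{sym-bar}) of $\epsilon$ fixes the remaining symmetry. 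Conversely, I would check that (\ref{exp}) under the stated constraints does satisfy (\ref{epsilon-eqn}) and is skew-Hermitian; combined with {\it i)} (which guarantees $\gk+\bar\gk=\gg^{\mathbb C}$, hence $\mathcal D^{0}$ preserves $\gg$ and the curvature vanishes) and the non-degeneracy of $g_{\Delta}$, this gives that the triple is $\gg$-admissible, and Proposition \ref{cons} then upgrades this to the existence of the integrable complex structure $J^{\mathcal J,D^{0}}$ on $T^{*}G$.

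I expect the main obstacle to be the bookkeeping in expanding (\ref{epsilon-eqn}) on root vectors: there are many cases ($\alpha+\beta$ a root or not, $\sigma(\alpha)+\beta$ a root or not, arguments in $\gh_{\gk}$ versus root spaces), and one must carefully track the sign conventions $\sigma(E_{\gamma})=-a_{\gamma}E_{\sigma(\gamma)}$, $N_{-\alpha,-\beta}=-N_{\alpha\beta}$ and the compatibility relation (\ref{sim-2}) so that the structure constants assemble into the coefficient $-a_{\alpha}\mu_{\alpha+\beta}N_{\sigma(\alpha)\sigma(\beta)}$ appearing in (\ref{exp}). A secondary subtlety is the role of the non-degeneracy condition on $g_{\Delta}=\mathrm{Im}(\epsilon\vert_{\Delta})$ with $\Delta=(\gk\cap\bar\gk)^{\sigma}$: by (\ref{sf}) one has $\gk\cap\bar\gk=(\gh_{\gk}\cap\bar\gh_{\gk})+\sum_{\alpha\in R_{0}^{\mathrm{sym}}\cap\sigma(R_{0}^{\mathrm{sym}})}\gg_{\alpha}$, so the relevant indices for the $\nu$-part are precisely the $\sigma$-symmetric ones; checking that (\ref{e3}) is exactly what is needed for $\epsilon\vert_{\Delta}$ to decouple into its $\gh$-part and its root part (so that non-degeneracy is not obstructed by cross terms) is the last thing I would verify.
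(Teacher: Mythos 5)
Your plan is essentially the paper's proof: the conditions on $\mathcal D^{0}$ are discharged by Lemma \ref{connection}, condition \emph{i)} by Lemma \ref{simple-aj}, and the remaining work is exactly the case-by-case evaluation of (\ref{epsilon-eqn}) with $X,Y,Z$ ranging over Cartan elements and root vectors, using (\ref{assumption-not}) to divide by the coefficients $(\alpha+\beta)(H)$ and thereby forcing the form (\ref{exp}) together with (\ref{ad-nu}), (\ref{suplimentara}) and (\ref{e3}). One small correction to your closing remark: by (\ref{sf}) the root part of $\gk\cap\bar{\gk}$ is indexed by $R_{0}\cap\sigma(R_{0})$, not by $R_{0}^{\mathrm{sym}}\cap\sigma(R_{0}^{\mathrm{sym}})$ (the two agree only when $R_{0}\cap\sigma(R_{0})$ is symmetric, an extra hypothesis the paper imposes only later, in Lemma \ref{conditie}).
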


\begin{proof}
From Definition \ref{adm} and Lemma
\ref{simple-aj}, we need to prove that $\epsilon$ satisfies
\begin{equation}\label{epsilon-eqn-1}
\epsilon (X, \mathcal D^{0}_{Y}(\bar{Z})) - \epsilon (Y, \mathcal
D^{0}_{X}(\bar{Z})) = \epsilon ( [X, Y], \bar{Z}), \quad \forall
X, Y, Z\in \gk ,
\end{equation}
with $\mathcal D^{0}$ from Lemma \ref{connection}, if and only if
it is given by (\ref{exp}) and conditions (\ref{ad-nu}),
(\ref{suplimentara}) and (\ref{e3}) are satisfied. In order to
prove this statement, we chose various arguments in
(\ref{epsilon-eqn-1}). Below, $H, \tilde{H}\in \gh_{\gk}$ and
$\alpha , \beta, \gamma\in R_{0}.$ First, let $X:= H$, $Y:=
\tilde{H}$ and $Z:= E_{\alpha}$. With these arguments,
(\ref{epsilon-eqn-1}) becomes
$$
\alpha (\tilde{H}) \epsilon (H, E_{\sigma(\alpha )}) = \alpha (H)
\epsilon (\tilde{H}, E_{\sigma (\alpha )}).
$$
From (\ref{assumption-not}),  $\alpha\vert_{\gh_{\gk}}$ is non-trivial. Chosing $\tilde{H}$ such
that $\alpha (\tilde{H})\neq 0$, we deduce that the above relation is equivalent to
\begin{equation}\label{e1}
\epsilon (H, E_{\sigma (\alpha )}) = \mu_{\alpha} \alpha (H),
\quad \forall H\in \gh_{\gk}, \quad \forall \alpha \in R_{0},
\end{equation}
for a constant $\mu_{\alpha}\in \mathbb{C}.$ By letting $X:= H$,
$Y:= E_{\alpha}$ and $Z:= \tilde{H}$ in (\ref{epsilon-eqn-1}), we
obtain that $\mu_{\alpha}\in \mathbb{R}$, for any $\alpha \in
R_{0}.$

Next, let $X:= E_{\alpha}$, $Y:=H$ and $Z:= E_{\beta}$ in
(\ref{epsilon-eqn-1}). We obtain
$$
\epsilon (E_{\alpha}, \mathcal D^{0}_{H}(\bar{E}_{\beta})) -
\epsilon (H, \mathcal D^{0}_{E_{\alpha}}(\bar{E}_{\beta})) =
\epsilon ([E_{\alpha}, H], \bar{E}_{\beta})
$$
or
\begin{equation}\label{r}
(\alpha + \beta )(H) \epsilon (E_{\alpha}, E_{\sigma (\beta )}) +
a_{\alpha} \epsilon (H, [E_{\sigma (\alpha )}, E_{\sigma (\beta
)}]) =0.
\end{equation}
If  $\alpha +\beta \neq
0$, then $(\alpha +\beta )\vert_{\gh_{\gk}}$ is non-trivial, by  (\ref{assumption-not}),
and
the above relation, together with (\ref{e1}), gives
\begin{align}
\nonumber\epsilon (E_{\alpha}, E_{\sigma (\beta )})& =-
a_{\alpha}\mu_{\alpha +\beta} N_{\sigma (\alpha )\sigma (\beta )},\
\forall\alpha , \beta, \alpha + \beta \in R_{0},\\
\label{e2}\epsilon (E_{\alpha}, E_{\sigma (\beta )}) &=0,\ \forall\alpha , \beta \in R_{0},\ 
\alpha+ \beta\notin R\cup \{ 0\} .
\end{align}
If $\alpha +\beta =0$, relation (\ref{r}) gives (\ref{e3}).

We now remark that conditions (\ref{e1}) and (\ref{e2}) imply that
$\epsilon$ is of the form (\ref{exp}), with $\mu_{\alpha}\in
\mathbb{R}$ ($\alpha \in R_{0}$) and $\nu_{\alpha}
:=\epsilon (E_{\alpha}, E_{-\sigma (\alpha )}) \in \mathbb{C}$
($\alpha \in R_{0}^{\mathrm{sym}}$).

We still need to consider (\ref{epsilon-eqn-1}), with the
remaining two types of arguments:
$X=E_{\alpha}$,
$Y= E_{\beta}$, $Z:= H$, and, respectively,  $X:= E_{\alpha}$, $Y:=
E_{\beta}$, $Z:= E_{\gamma}$  (from the definition of $\mathcal D^{0}$,
(\ref{epsilon-eqn-1}) holds when all $X$, $Y$, $Z$ belong to the
Cartan part $\gh_{\gk}$).

Let $X=E_{\alpha}$, $Y= E_{\beta}$, $Z:= H$. Relation
(\ref{epsilon-eqn-1}) gives
\begin{equation}\label{gres}
\overline{\beta (H)} a_{\beta}\epsilon (E_{\alpha}, E_{\sigma
(\beta )}) + \overline{\alpha (H)} a_{\beta}\overline{\epsilon
(E_{\alpha}, E_{\sigma (\beta )})} = \epsilon ([E_{\alpha},
E_{\beta}], \bar{H}).
\end{equation}
When $\alpha +\beta \neq 0$, relation (\ref{gres}) follows from
(\ref{e1}) and (\ref{e2}) (and the skew-Hermitian property of
$\epsilon$). When $\alpha +\beta = 0$, relation (\ref{gres})
implies that $\nu_{\alpha}\in \mathbb{R}$, for any $\alpha \in
R_{0}^{\mathrm{sym}}.$ Since $\epsilon$ is skew-Hermitian and $\nu_{\alpha}\in \mathbb{R}$,
relation (\ref{ad-nu}) holds.

Finally, let $X:= E_{\alpha}$, $Y:= E_{\beta}$, $Z:= E_{\gamma}$ in
(\ref{epsilon-eqn-1}). From (\ref{e1}), (\ref{e2}) and
$\mu_{\alpha}, \nu_{\beta}\in \mathbb{R}$, relation (\ref{epsilon-eqn-1}) is
automatically satisfied, when $\alpha +\beta +\gamma \neq 0$; when
$\alpha +\beta +\gamma =0$, we obtain
\begin{equation}\label{sup-1}
a_{\beta}N_{\sigma (\beta )\sigma (\gamma)} \nu_{\alpha}
+a_{\alpha}N_{\sigma(\gamma )\sigma (\alpha )} \nu_{\beta}
+N_{\beta\alpha} \nu_{ \gamma} =0.
\end{equation}
Using now the relations
$$
N_{\sigma (\beta )\sigma (\gamma )} = - a_{\beta
+\gamma}a_{\beta}a_{\gamma} N_{\beta\gamma}, \quad N_{\sigma
(\gamma )\sigma (\alpha )} = - a_{\alpha
+\gamma}a_{\alpha}a_{\gamma} N_{\gamma\alpha}
$$
and $N_{\alpha\beta} = N_{\beta\gamma}= N_{\gamma\alpha}$ (because
$\alpha + \beta +\gamma =0$; see
Subsection \ref{semi}), we obtain that (\ref{sup-1}) is equivalent
to (\ref{suplimentara}). Our claim follows.
\end{proof}

The statement of Theorem \ref{main-applic} requires various
comments. First, we need to explain how the
constants $\nu_{\alpha}$ can be constructed, such that (\ref{ad-nu}) and
(\ref{suplimentara}) are satisfied. Next, we need to study the
non-degeneracy of $g_{\Delta}.$ This will be done in the following
paragraphs.

\subsubsection{The construction of $\nu_{\alpha}$}

Let  $R_{0}$ be a $\sigma$-parabolic system of $R$ (the argument holds for
any closed subsystem of $R$, not necessarily $\sigma$-parabolic).
In this paragraph, we describe a method to construct real constants
$\nu_{\alpha}$, $\alpha \in R_{0}^{\mathrm{sym}}$, such that
conditions (\ref{ad-nu}) and (\ref{suplimentara}) from Theorem
\ref{main-thm} hold. Since $R_{0}^{\mathrm{sym}}$ is closed and
symmetric, it is a root system (see e.g. \cite{bourbaki}, page
164). Let $\Pi := \{ \alpha_{0}, \cdots , \alpha_{k}\}$ be a system
of simple roots of ${R}_{0}^{\mathrm{sym}}$. Define, as usual, the
height of $\alpha =n_{1}\alpha_{1}+\cdots + n_{k}\alpha_{k}\in
R_{0}^{\mathrm{sym}}$ with respect to $\Pi$, by $n(\alpha ):=
n_{1}+\cdots + n_{k}.$ 

\begin{lem}\label{nu} The constants $\nu_{\alpha} := a_{\alpha} n(\alpha )$,
for any $\alpha\in R^{\mathrm{sym}}_{0}$, satisfy (\ref{ad-nu})
and (\ref{suplimentara}).
\end{lem}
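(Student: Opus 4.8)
The plan is to verify the two conditions \eqref{ad-nu} and \eqref{suplimentara} directly from the definition $\nu_\alpha := a_\alpha n(\alpha)$, using the elementary properties of the height function on a root system and the sign properties \eqref{sim-1} of the constants $a_\alpha$.

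For \eqref{ad-nu}, I would observe that the height function is additive and that $n(-\alpha) = -n(\alpha)$ for every $\alpha \in R_0^{\mathrm{sym}}$, since if $\alpha = n_1\alpha_1 + \cdots + n_k\alpha_k$ then $-\alpha = -n_1\alpha_1 - \cdots - n_k\alpha_k$. Combining this with $a_{-\alpha} = a_\alpha$ from \eqref{sim-1}, we get $\nu_{-\alpha} = a_{-\alpha} n(-\alpha) = -a_\alpha n(\alpha) = -\nu_\alpha$, which is exactly \eqref{ad-nu}.

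For \eqref{suplimentara}, suppose $\alpha, \beta, \gamma \in R_0^{\mathrm{sym}}$ with $\alpha + \beta + \gamma = 0$. Here I would again use additivity of the height, which gives $n(\alpha) + n(\beta) + n(\gamma) = n(\alpha + \beta + \gamma) = n(0) = 0$. The remaining subtlety is that the factors $a_\alpha, a_\beta, a_\gamma$ in \eqref{suplimentara} do not immediately cancel. The key point is that \eqref{sim-2} applied with the pair $(\alpha, \beta)$ and $\alpha + \beta = -\gamma \in R$ forces a relation among $a_\alpha$, $a_\beta$, $a_{\alpha+\beta} = a_{-\gamma} = a_\gamma$ and the structure constants. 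I expect, however, that the cleaner route is to note that the quantity $a_\alpha$ is constant on the set of roots appearing in any given root string, or more precisely that for $\alpha + \beta + \gamma = 0$ one can show $a_\alpha = a_\beta = a_\gamma$ using \eqref{sim-1}, \eqref{sim-2} together with \eqref{rel-helg}; once that is established, \eqref{suplimentara} reduces to $a\,(n(\alpha) + n(\beta) + n(\gamma)) = 0$, which holds by additivity of the height.

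The main obstacle is precisely pinning down the behaviour of the $a_\alpha$'s when $\alpha + \beta + \gamma = 0$: one must check carefully, from \eqref{sim-1}–\eqref{sim-2} and the identity $N_{\alpha\beta} = N_{\beta\gamma} = N_{\gamma\alpha}$ of \eqref{rel-helg}, that $a_\alpha a_\beta a_\gamma$ behaves as needed (equivalently, that $a_\alpha = a_\beta = a_\gamma$, or at least that the combination $a_\alpha n(\alpha) + a_\beta n(\beta) + a_\gamma n(\gamma)$ collapses). This is a short computation with the Vogan-diagram constants rather than a conceptual difficulty; everything else follows from the additivity and the sign-reversal property of the height function.
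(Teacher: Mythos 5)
Your verification of (\ref{ad-nu}) is correct and is exactly the paper's argument: $\nu_{-\alpha}=a_{-\alpha}n(-\alpha)=-a_{\alpha}n(\alpha)=-\nu_{\alpha}$ by (\ref{sim-1}) and additivity of the height.

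For (\ref{suplimentara}), however, you have manufactured an obstacle that is not there, and your proposed way around it is both unproven and unnecessary. Condition (\ref{suplimentara}) reads $a_{\alpha}\nu_{\alpha}+a_{\beta}\nu_{\beta}+a_{\gamma}\nu_{\gamma}=0$; substituting $\nu_{\alpha}=a_{\alpha}n(\alpha)$ gives $a_{\alpha}\nu_{\alpha}=a_{\alpha}^{2}\,n(\alpha)=n(\alpha)$, since $a_{\alpha}\in\{\pm1\}$. The sum is therefore $n(\alpha)+n(\beta)+n(\gamma)=0$ by additivity of the height, and the $a$'s cancel \emph{immediately} -- this is precisely why the definition $\nu_{\alpha}:=a_{\alpha}n(\alpha)$ carries the extra factor $a_{\alpha}$. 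Your claim that one must first establish $a_{\alpha}=a_{\beta}=a_{\gamma}$ (via (\ref{sim-2}) and (\ref{rel-helg})) is a detour: that equality is not needed, you do not prove it, and it is not clear it even holds in general (from (\ref{sim-2}) one only gets $a_{\alpha}a_{\beta}a_{\gamma}=-N_{\alpha\beta}^{-1}N_{\sigma(\alpha)\sigma(\beta)}$, which requires further analysis of the structure constants). Moreover, even granting $a_{\alpha}=a_{\beta}=a_{\gamma}=a$, your reduction to ``$a\,(n(\alpha)+n(\beta)+n(\gamma))=0$'' has the wrong power of $a$, which suggests you substituted $\nu$ into (\ref{suplimentara}) without keeping track of the prefactors. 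The fix is one line: note $a_{\alpha}^{2}=1$ and conclude.
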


\begin{proof} The hight function $n: R_{0}^{\mathrm{sym}} \rightarrow \mathbb{Z}$ is
additive. In particular, $n(-\alpha )  = - n(\alpha )$ and if
$\alpha +\beta +\gamma =0$, then $n(\alpha ) + n(\beta ) +n
(\gamma )=0$. Recall also that $a_{\alpha}^{2} = 1$ and
$a_{-\alpha}= a_{\alpha}$ for any $\alpha$. The claim follows.
\end{proof}

\subsubsection{The non-degeneracy of $g_{\Delta}$}

We begin with the simplest case, when $R_{0}$ is a
$\sigma$-positive system.

\begin{rem}\label{outer}{\rm We consider a triple $(\gk , \mathcal D^{0}, \epsilon )$
satisfying the conditions {\it i)} and {\it  ii)} of Theorem
\ref{main-applic}.  We assume, moreover, that $R_{0}$ is a
$\sigma$-positive system (not only $\sigma$-parabolic).
Then $\Delta = (\gk \cap \bar{\gk})^{\sigma}$ reduces
to $(\gh_{\gk} \cap \bar{\gh}_{\gk})^{\sigma}$ and the
non-degeneracy of $g_{\Delta}=\mathrm{Im}(
\epsilon\vert_{\Delta})$  concerns only the Cartan part
$\epsilon_{0}$ of $\epsilon .$ Our aim is to show that, under a
mild additional assumption, we can chose the Cartan part
$\epsilon_{0}$ of $\epsilon $ such that $g_{\Delta}$ is
non-degenerate and (\ref{e3}) is satisfied as well. More
precisely, assume that the subspace
$$
\mathcal S :=\mathrm{Span}_{\mathbb{C}}\{ H_{\alpha},\
\alpha\in R_{0}^{\mathrm{sym}}\}
$$
is transverse to its conjugate
$$
\bar{\mathcal S}=\sigma ({\mathcal S}) =
\mathrm{Span}_{\mathbb{C}}\{ H_{\sigma (\alpha )},\ \alpha\in
R_{0}^{\mathrm{sym}}\}.
$$
(We remark that this holds for many
$\sigma$-positive systems, see  Subsections 5.1-5.3  of
\cite{liana}). A simple argument (see \cite{liana}, Section 5),
then shows that the Cartan subalgebra $\gh_{\gk}$ of $\gk$
decomposes as a direct sum
\begin{equation}\label{deco-kk}
\gh_{\gk} = (\gh_{\gk} \cap\bar{\gh}_{\gk}) \oplus \mathcal S
\oplus {\mathcal W}
\end{equation}
where $\mathcal W\subset \gh_{\gk}$ is any complementary subspace
of $(\gh_{\gk}\cap\bar{\gh}_{\gk})\oplus \mathcal S.$ Chose $\epsilon_{0}\in \gh_{\gk}^{*}\otimes \bar{\gh}_{\gk}^{*}$
such that
$$
\epsilon_{0} (\mathcal S , \cdot ) = \epsilon_{0} (\cdot ,
\bar{\mathcal S}) =0
$$
(i.e. (\ref{e3}) is satisfied) and $g_{\Delta} = \mathrm{Im}(
\epsilon\vert_{(\gh_{\gk} \cap \bar{\gh}_{\gk})^{\sigma}})$ is
non-degenerate. With this choice, $(\gk , \mathcal D^{0}, \epsilon
)$ is a symmetric $\gg$-admissible triple and the associated pair
$(\mathcal J, D^{0})$ has the property that  $J^{\mathcal J,
D^{0}}$ is integrable. }
\end{rem}

In order to study the non-degeneracy of $g_{\Delta}$ in general
(i.e. when $R_{0}$ is $\sigma$-parabolic, not necessarily
$\sigma$-positive) we chose a preferred basis of $\Delta$ and we
compute $g_{\Delta}$ in this basis. To simplify the arguments, we
assume that $R_{0}\cap \sigma (R_{0})$ is symmetric (this is
always satisfied, when $\gg$ is of inner type). Then $\gk \cap \bar{\gk}$ is reductive.  
Its real form $\Delta = (\gk \cap \bar{\gk})^{\sigma}$ is given by
$$
\Delta = \gh_{\gk}\cap \gh_{\gg} +
\sum_{\alpha\in R_{0}\cap \sigma (R_{0})} \mathbb{R}A_{\alpha}
+\sum_{\alpha \in R_{0}\cap \sigma (R_{0})} \mathbb{R} B_{\alpha},
$$
where, as in the proof of Lemma \ref{connection},  $A_{\alpha}:= E_{\alpha} -a_{\alpha} E_{\sigma (\alpha )}$ 
and $B_{\alpha}:= i( E_{\alpha} +
a_{\alpha} E_{\sigma (\alpha )})$. 
Since $R_{0}\cap \sigma (R_{0})$ is symmetric, $H_{\alpha} = [E_{\alpha}, E_{-\alpha }]\in \gk \cap \bar{\gk}$, for
any $\alpha \in R_{0}\cap\sigma (R_{0}).$ Define new vectors
$$
F^{+}_{\alpha}:= H_{\alpha} + H_{\sigma (\alpha )},\
F^{-}_{\alpha}:= i( H_{\alpha}  - H_{\sigma (\alpha )}),\quad \forall
\alpha \in R_{0}\cap \sigma (R_{0}).
$$
They belong to $\gh_{\gk}\cap \gh_{\gg}$. It follows that
$$
\gh_{\gk}\cap \gh_{\gg}=  \mathrm{Span}_{\mathbb{R}}\{
F^{+}_{\alpha},\ \alpha \in R_{0}\cap \sigma (R_{0})\} \oplus
\mathrm{Span}_{\mathbb{R}}\{ F^{-}_{\alpha}, \alpha\in R_{0}\cap
\sigma (R_{0})\} \oplus\mathcal C,
$$
where
$$
\mathcal C = \mathrm{Ann} (R_{0}\cap \sigma
(R_{0}))\vert_{\gh_{\gk}\cap \gh_{\gg}}.
$$
Let $\{ c_{1},\cdots , c_{s}\}$ be a basis of $\mathcal C$. Chose
a maximal system of linear independent vectors $\{ F_{{1}}^{+},
\cdots , F_{{p}}^{+}\}$ from $\{ F^{+}_{\alpha}, \alpha \in
R_{0}\cap \sigma (R_{0})\}$ and similarly, a maximal system of
linearly independent vectors $\{ F_{{1}}^{-}, \cdots ,
F_{{q}}^{-}\}$ from $\{ F^{-}_{\alpha}, \alpha \in R_{0}\cap
\sigma (R_{0})\} .$ It follows that the system of vectors
$$
\mathcal B := \{ c_{k},\ F^{+}_{{r}},\
F^{-}_{{t}},\
A_{\alpha},\ B_{\alpha}, \ \alpha \in R_{0}\cap \sigma (R_{0})\}
$$
(where $1\leq k\leq s$, $1\leq r\leq p$, $1\leq t\leq q$) form a
basis $\mathcal B$ of $\Delta .$

\begin{lem}\label{conditie}
Let $\epsilon \in \gk^{*}\otimes \bar{\gk}^{*}$ be given by
(\ref{exp}), such that condition (\ref{e3}) is satisfied. Assume, moreover, that
$R_{0}\cap \sigma (R_{0})$ is symmetric. With respect to the basis
$\mathcal B$ above, all the entries of
$g_{\Delta}=\mathrm{Im}(\epsilon\vert_{\Delta})$ are zero except:
\begin{align*}
g_{\Delta}(A_{\alpha}, B_{\beta}) &= -a_{\alpha} N_{\sigma
(\alpha)\beta} (\mu_{\alpha +\sigma ( \beta )}+a_{\sigma (\alpha )
+\beta} \mu_{\sigma (\alpha ) +\beta})\\
&+ N_{\alpha  \beta}(\mu_{\sigma (\alpha +\beta )} + a_{\alpha
+\beta }\mu_{\alpha +\beta})\\
g_{\Delta}( F^{+}_{r}, B_{\alpha})  &= (\mu_{\sigma (\alpha )}
+a_{\alpha}\mu_{\alpha})\alpha (F^{+}_{r})\\
g_{\Delta}( F^{-}_{t}, A_{\alpha}) & = i (\mu_{\sigma (\alpha
)} +a_{\alpha}\mu_{\alpha})\alpha (F^{-}_{t}).
\end{align*}
(We used the convention $N_{\delta\gamma }=\mu_{\delta +\gamma}=0$ for
$\delta , \gamma \in R_{0}$, such that $\delta +\gamma \notin R$). 
In particular, if $g_{\Delta}$ is non-degenerate, then
\begin{equation}\label{span}
\mathrm{dim}_{\mathbb{R}}\langle \alpha +\sigma (\alpha ), \
\alpha \in R_{0}\cap \sigma (R_{0})\rangle =
\mathrm{dim}_{\mathbb{R}} \langle \alpha -\sigma (\alpha ), \
\alpha \in R_{0}\cap \sigma (R_{0})\rangle .
\end{equation}

\end{lem}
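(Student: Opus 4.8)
Compute $\epsilon$ on pairs of basis vectors from $\mathcal B$ and take imaginary parts.

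My plan is to evaluate $\epsilon$, given by the explicit formula \eqref{exp}, on all pairs of basis vectors from $\mathcal B = \{ c_k, F^{+}_{r}, F^{-}_{t}, A_{\alpha}, B_{\alpha}\}$, then read off the imaginary parts. The key organizing principle is bookkeeping by root: since $\epsilon$ is a sum of terms of the form $\alpha\otimes\omega_{\sigma(\alpha)}$, $\omega_{\alpha}\otimes\sigma(\alpha)$, $\omega_{\alpha}\otimes\omega_{\sigma(\beta)}$, and $\omega_{\gamma}\otimes\omega_{-\sigma(\gamma)}$, most pairs of basis vectors annihilate most terms. First I would observe that the vectors $A_{\alpha}, B_{\alpha}$ lie in root spaces $\gg_{\alpha}\oplus\gg_{\sigma(\alpha)}$, while $c_k, F^{\pm}_r$ lie in $\gh_{\gk}\cap\gh_{\gg}$; pairing a Cartan vector with a Cartan vector only hits $\epsilon_0$, which by \eqref{e3} vanishes on $\gh_{\gk}\times H_{\sigma(\alpha)}$ and hence on the whole span of the $F^{\pm}$'s and $c_k$'s that are built from Cartan vectors — I must be a little careful here, but the point is that \eqref{e3} is precisely what kills the Cartan-Cartan block. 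Then pairing a Cartan vector with an $A_{\alpha}$ or $B_{\alpha}$ hits only the $\mu_{\alpha}(\alpha\otimes\omega_{\sigma(\alpha)} + a_{\alpha}\omega_{\alpha}\otimes\sigma(\alpha))$ terms; writing out $A_{\alpha} = E_{\alpha} - a_{\alpha}E_{\sigma(\alpha)}$, $B_{\alpha} = i(E_{\alpha}+a_{\alpha}E_{\sigma(\alpha)})$ and using $\omega_{\alpha}(E_{\beta}) = \delta_{\alpha\beta}$, $a_{\alpha}=a_{\sigma(\alpha)}$, $a_{\alpha}^2=1$, one gets the stated $g_{\Delta}(F^{+}_r,B_{\alpha})$ and $g_{\Delta}(F^{-}_t,A_{\alpha})$ formulas, with all $(F^{\pm},A)$, $(F^{\pm},B)$ of the "wrong parity" vanishing. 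Finally pairing $A_{\alpha}$ or $B_{\alpha}$ with $A_{\beta}$ or $B_{\beta}$ hits the $\omega\otimes\omega$ terms — the double-sum term $\sum a_{\alpha}\mu_{\alpha+\beta}N_{\sigma(\alpha)\sigma(\beta)}\omega_{\alpha}\otimes\omega_{\sigma(\beta)}$ and the $\nu_{\gamma}$ term — and a direct expansion, keeping track of which index combinations contribute, yields $g_{\Delta}(A_{\alpha},B_{\beta})$ as stated (the $\nu$-term contributes to the imaginary part only through $\mu$'s after using \eqref{ad-nu}, or cancels; I expect it to drop out of $\mathrm{Im}$ since $\nu_{\alpha}\in\mathbb R$).

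For the "in particular" claim, once the explicit form of $g_{\Delta}$ is in hand I would argue as follows. Restrict attention to the subspace spanned by $\{F^{+}_r, F^{-}_t, A_{\alpha}, B_{\alpha}\}$ together with $\mathcal C$; since $g_{\Delta}$ vanishes identically on $\mathcal C$ and on $\mathcal C$ paired with everything (the entries listed involve only $F^{\pm}$ and $A,B$, never $c_k$), non-degeneracy of $g_{\Delta}$ forces $\mathcal C = 0$ — but that is a side remark; the real content is the pairing between the $F^{+}$-block and the $F^{-}$-block via the $A,B$'s. From the formulas, $g_{\Delta}(F^{+}_r, \cdot)$ is nonzero only against the $B_{\alpha}$'s, and $g_{\Delta}(F^{-}_t,\cdot)$ only against the $A_{\alpha}$'s; moreover $g_{\Delta}(A_{\alpha},B_{\beta})$ provides the $A$-$B$ pairing. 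Non-degeneracy of $g_{\Delta}$ therefore requires that the linear functionals $\{\alpha|_{\langle F^{+}_r\rangle}: \alpha\in R_0\cap\sigma(R_0)\}$ separate points of $\mathrm{Span}\{F^{+}_r\}$ and likewise $\{\alpha|_{\langle F^{-}_t\rangle}\}$ separate points of $\mathrm{Span}\{F^{-}_t\}$ — because a vector in the $F^{+}$-span on which all $\alpha$ vanish would be $g_{\Delta}$-orthogonal to every $B_{\alpha}$, hence (as it is already orthogonal to everything else) in the kernel. Counting dimensions: $\alpha(F^{+}_{\beta}) = (\alpha,H_{\beta}+H_{\sigma(\beta)}) = (\alpha, H_{\beta+\sigma(\beta)})$ up to the Killing pairing, so the rank of the system $\{\alpha|_{\mathrm{Span}\{F^{+}\}}\}$ equals $\dim_{\mathbb R}\langle\alpha+\sigma(\alpha):\alpha\in R_0\cap\sigma(R_0)\rangle$, and this must equal $p = \dim\mathrm{Span}\{F^{+}_r\}$. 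A symmetric computation with $F^{-}_{\alpha} = i(H_{\alpha}-H_{\sigma(\alpha)})$ gives $\dim_{\mathbb R}\langle\alpha-\sigma(\alpha)\rangle = q$. But from the construction, $p$ is the rank of $\{F^{+}_{\alpha}\} = \{H_{\alpha}+H_{\sigma(\alpha)}\}$, which is exactly $\dim_{\mathbb R}\langle\alpha+\sigma(\alpha)\rangle$, and similarly $q = \dim_{\mathbb R}\langle\alpha-\sigma(\alpha)\rangle$ — so actually the two dimensions in \eqref{span} are automatically $p$ and $q$, and the content of non-degeneracy is precisely that these two numbers coincide. (Alternatively: $g_{\Delta}$ restricted to $\mathrm{Span}\{F^{+}\}\oplus\mathrm{Span}\{F^{-}\}\oplus\mathrm{Span}\{A\}\oplus\mathrm{Span}\{B\}$ has a block-off-diagonal shape pairing $F^{+}\leftrightarrow B$, $F^{-}\leftrightarrow A$, $A\leftrightarrow B$; a necessary condition for such a form to be non-degenerate is $\dim\mathrm{Span}\{F^{+}\} = \dim\mathrm{Span}\{B\text{-directions hit}\}$ balanced against $\dim\mathrm{Span}\{F^{-}\}$, forcing $p=q$, i.e. \eqref{span}.)

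The main obstacle I anticipate is not any single calculation but the combinatorial care needed in the $A_{\alpha}$-$B_{\beta}$ block: one must correctly track which terms of \eqref{exp} contribute for a given pair $(\alpha,\beta)$ — the term $\omega_{\alpha'}\otimes\omega_{\sigma(\beta')}$ contributes to $\epsilon(A_{\alpha},B_{\beta})$ only when $\{\alpha',\sigma(\beta')\}$ meets $\{\alpha,\sigma(\alpha)\}\times\{\beta,\sigma(\beta)\}$ appropriately, and the reindexing $\alpha+\beta\leftrightarrow\sigma(\alpha)+\beta$ etc. in the subscripts of $\mu$ and $N$ has to be done without sign errors, using $N_{-\mu,-\nu}=-N_{\mu\nu}$, $N_{\sigma(\mu)\sigma(\nu)} = -a_{\mu+\nu}a_{\mu}a_{\nu}N_{\mu\nu}$ from \eqref{sim-2}, and $a^2=1$. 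The convention $N_{\delta\gamma}=\mu_{\delta+\gamma}=0$ when $\delta+\gamma\notin R$ must be invoked to make the formulas uniform. Once that block is pinned down the rest — the vanishing of most entries and the dimension-count for \eqref{span} — is routine linear algebra. I would present the first two blocks ($F^{\pm}$ vs. everything) quickly, devote the bulk of the argument to the $A$-$B$ block, and close with the short dimension-counting deduction of \eqref{span}.
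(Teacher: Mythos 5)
Your proposal follows the paper's proof exactly: the paper simply asserts that the entries can be ``checked easily from (\ref{exp}) and (\ref{e3})'' (noting only that (\ref{e3}) places $F^{+}_{r}$ and $F^{-}_{t}$ in the kernel of the Cartan block) and that non-degeneracy of the resulting block matrix forces $p=q$, which is precisely your block-off-diagonal dimension count. One caveat on your parenthetical about the $\nu$-term: since $B_{\beta}$ carries a factor of $i$, a real coefficient $\nu_{\gamma}$ lands in $\mathrm{Im}(\epsilon)$ rather than dropping out of it, so the absence of $\nu$'s from $g_{\Delta}(A_{\alpha},B_{\beta})$ must come from an actual cancellation in the expansion (your ``or cancels'' alternative), not from the reality of the $\nu_{\gamma}$'s.
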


\begin{proof}
The entries of $g_{\Delta}$ as above can be checked easily from
(\ref{exp}) and (\ref{e3}) and we omit the details
(for example, (\ref{e3}) means that
$F^{+}_{r}$ and $F^{-}_{t}$ belong to the kernel of
$g_{\Delta}\vert_{\gh_{\gk}\cap \gh_{\gg}}$). It is also easy to
check that if the matrix which represents $g_{\Delta}$ in the
basis $\mathcal B$ is non-degenerate, then $p=q$, i.e. relation
(\ref{span}) is satisfied. 
\end{proof}

\begin{rem}{\rm We now comment on condition (\ref{span}) from Lemma  \ref{conditie}.
Let $R_{0}\subset R$ be a closed subset of roots, such that
$R_{0}^{\prime}:= R_{0}\cap \sigma (R_{0})$ is symmetric and
(\ref{span}) holds. Since $R_{0}^{\prime}$ is 
symmetric and closed, it is the root system of the $\sigma$-invariant 
semisimple complex subalgebra 
$$
(\gg^{\prime})^{\mathbb{C}} := \gh^{\prime}+ \sum_{\alpha\in
R_{0}^{\prime}}\gg_{\alpha}\subset\gg^{\mathbb{C}},
$$
where $\gh^{\prime} = \mathrm{Span}_{\mathbb{C}} \{ H_{\alpha},\
\alpha\in R_{0}^{\prime}\}$ is a $\sigma$-invariant Cartan subalgebra of 
$(\gg^{\prime})^{\mathbb{C}}$. 
The action of $\sigma$ on the subset of roots $R_{0}^{\prime}\subset R$
is induced by an antilinear involution of  $(\gg^{\prime})^{\mathbb{C}}$, namely
by the restriction $\sigma^{\prime}$ of $\sigma$ to $(\gg^{\prime})^{\mathbb{C}}.$ 
Let $\gg^{\prime} = (\gg^{\prime})^{\mathbb{C}}\cap \gg$ be the real form of  
$(\gg^{\prime})^{\mathbb{C}} $ defined by $\sigma^{\prime}.$ Then
$\gh^{\prime}_{\gg^{\prime}}:= (\gh^{\prime})^{\sigma^{\prime}}$ is a maximally compact
Cartan subalgebra of $\gg^{\prime}.$  If we assume, in addition, that $R^{\prime}_{0}$ is irreducible, then
$(\gg^{\prime})^{\mathbb{C}}$ is a simple Lie algebra. It is easy to see 
that condition (\ref{span}) holds if and only if the automorphism of the Vogan diagram 
of $\gg^{\prime}$ has no fixed points. 
By inspecting the Vogan diagrams of
simple, non-complex real Lie algebras (see e.g. \cite{knapp},
Appendix C)  we deduce that  
(\ref{span}) holds if and only if 
$(\gg^{\prime})^{\mathbb{C}} $ is isomorphic to $\gsl
(2n+1, \mathbb{C})$ and $\gg^{\prime}$ is
the real form $\gsl (2n+1, \mathbb{R})\subset \gsl (2n+1,
\mathbb{C})$.}

\end{rem}

\subsubsection{Symmetric $\gg$-admissible triples of inner type}

Theorem \ref{main-applic} provides a complete explicit
description of symmetric $\gg$-admissible triples $(\gk , \mathcal D^{0},
\epsilon )$ of inner type, as follows.

\begin{thm}\label{inner-descr}
Let $\gg$ be a real form of inner type of $\gg^{\mathbb{C}}$, given by (\ref{alg}) (with 
$\sigma\vert_{R}= -\mathrm{Id}$). 
Consider a triple  $(\gk , \mathcal D^{0}, \epsilon )$ 
with $\gk$ the regular subalgebra  (\ref{g-k}), 
$\mathcal D^{0}$ as in Lemma \ref{connection}
and  $\epsilon \in\gk^{*}\otimes \bar{\gk}^{*}$
skew-Hermitian. Then   $(\gk , \mathcal D^{0}, \epsilon )$  is a
(symmetric) $\gg$-admissible triple (and 
the associated pair $(\mathcal J , D^{0})$ defines a complex structure 
$J^{\mathcal J , D^{0}}$ on $T^{*}G$) if and only if:

i) the root system $R_{0}$ of $\gk$ is a positive root system ($R_{0}=R^{+}$)  
and the Cartan part satisfies $\gh_{\gk}+ \bar{\gh}_{\gk}= \gh$;

ii) $\epsilon $ is of the form
$$
\epsilon = \epsilon_{0} + \sum_{\alpha\in R^{+}} \mu_{\alpha} (
\alpha \otimes \omega_{-\alpha }- a_{\alpha}
\omega_{\alpha}\otimes\alpha  )+\sum_{\alpha , \beta , \alpha +\beta\in
R^{+}}a_{\alpha}\mu_{\alpha +\beta}N_{\alpha \beta }
\omega_{\alpha}\otimes \omega_{-\beta }
$$
where $\epsilon_{0} \in \Lambda^{2} (\gh_{\gk})$ is trivially extended to $\gk$, 
and  $\mu_{\alpha}$ ($\alpha \in R^{+}$)   are arbitrary real constants;

iii) $\mathrm{Im}(\epsilon\vert_{\gh_{\gk}\cap
i\gh_{\mathbb{R}}})$ is non-degenerate.
\end{thm}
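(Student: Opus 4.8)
The plan is to derive Theorem~\ref{inner-descr} as a specialization of Theorem~\ref{main-applic} to the inner case, where $\sigma|_{R} = -\mathrm{Id}$, and then to verify that under this hypothesis conditions \emph{i)}, \emph{ii)}, \emph{iii)} of Theorem~\ref{main-applic} collapse into the simpler conditions stated here. First I would record the effect of $\sigma(\alpha) = -\alpha$ on the relevant quantities: $\sigma$-parabolic means $R_{0}\cup (-R_{0}) = R$, i.e. $R_{0}$ is a parabolic subsystem, and since $R_{0}$ must also contain a closed subset whose union with its negative is all of $R$, one checks that (together with the non-degeneracy requirement, which forces $R_{0}\cap(-R_{0})$ to be empty — see below) $R_{0}$ must be an actual positive system $R^{+}$. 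Indeed, $\sigma$-positive in the inner case is exactly a positive root system, so the first half of Remark~\ref{outer} applies verbatim once we know $R_{0}^{\mathrm{sym}} = \emptyset$. So the key structural observation to nail down is: when can $R_{0}^{\mathrm{sym}} = R_{0}\cap(-R_{0})$ be nonempty? If $\gamma \in R_{0}^{\mathrm{sym}}$ then $\gamma, -\gamma\in R_{0}$ and the term $\nu_{\gamma}\omega_{\gamma}\otimes\omega_{-\sigma(\gamma)} = \nu_{\gamma}\omega_{\gamma}\otimes\omega_{\gamma}$ contributes to $\Delta = (\gk\cap\bar\gk)^{\sigma}$; I would invoke Lemma~\ref{conditie} (whose hypothesis ``$R_{0}\cap\sigma(R_{0})$ symmetric'' is automatic in the inner case) and condition \eqref{span}. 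In the inner case $\sigma(\alpha) = -\alpha$, so $\alpha+\sigma(\alpha) = 0$ for every $\alpha$, whence the left side of \eqref{span} has dimension $0$, while $\alpha - \sigma(\alpha) = 2\alpha$ spans $\mathrm{Span}\langle R_{0}\cap\sigma(R_{0})\rangle$; thus \eqref{span} forces $R_{0}\cap\sigma(R_{0}) = \emptyset$, i.e. $R_{0}^{\mathrm{sym}} = \emptyset$ and hence $R_{0}$ is a $\sigma$-positive (= positive) system $R^{+}$.

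Once $R_{0} = R^{+}$ is established, the simplifications in \emph{ii)} are mostly bookkeeping. The last sum in \eqref{exp} is over $R_{0}^{\mathrm{sym}} = \emptyset$, so it disappears, together with conditions \eqref{ad-nu} and \eqref{suplimentara} on the $\nu_{\gamma}$, which become vacuous. In the remaining two sums I substitute $\sigma(\alpha) = -\alpha$: the middle term $\mu_{\alpha}(\alpha\otimes\omega_{\sigma(\alpha)} + a_{\alpha}\omega_{\alpha}\otimes\sigma(\alpha))$ becomes $\mu_{\alpha}(\alpha\otimes\omega_{-\alpha} + a_{\alpha}\omega_{\alpha}\otimes(-\alpha)) = \mu_{\alpha}(\alpha\otimes\omega_{-\alpha} - a_{\alpha}\omega_{\alpha}\otimes\alpha)$, and the sum over $\alpha,\beta,\alpha+\beta\in R_{0}$ of $-a_{\alpha}\mu_{\alpha+\beta}N_{\sigma(\alpha)\sigma(\beta)}\omega_{\alpha}\otimes\omega_{\sigma(\beta)}$ becomes, using the Weyl-basis identity $N_{-\alpha,-\beta} = -N_{\alpha\beta}$, equal to $+\sum a_{\alpha}\mu_{\alpha+\beta}N_{\alpha\beta}\omega_{\alpha}\otimes\omega_{-\beta}$, matching \emph{ii)}. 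For \emph{iii)}, the metric $g_{\Delta} = \mathrm{Im}(\epsilon|_{\gk\cap\gg})$ restricts, since $R_{0}^{\mathrm{sym}} = \emptyset$ kills all root-space contributions to $\Delta$ by \eqref{sf}, to $\mathrm{Im}(\epsilon_{0}|_{(\gh_{\gk}\cap\bar\gh_{\gk})^{\sigma}})$, and $\gh_{\gk}\cap\bar\gh_{\gk}$ together with the real structure gives $\gh_{\gk}\cap i\gh_{\mathbb{R}}$ as the relevant real subspace (recall $\gh_{\gg}$ in the inner compact-type direction is $i\gh_{\mathbb{R}}$); also \eqref{e3} becomes vacuous since there are no $\alpha\in R_{0}^{\mathrm{sym}}$. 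Finally I would note that the hypothesis \eqref{assumption-not} of Theorem~\ref{main-applic} is not needed in the statement of Theorem~\ref{inner-descr}: in the inner case one can argue directly, or observe that the proof of Theorem~\ref{main-applic} only used \eqref{assumption-not} to separate terms indexed by distinct roots, which is automatic here because $\alpha+\beta = 0$ is the only coincidence and it is handled separately. I would make this last point carefully rather than just citing Theorem~\ref{main-applic} verbatim.

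The main obstacle I anticipate is precisely this last issue — making the reduction to Theorem~\ref{main-applic} fully rigorous despite the extra hypothesis \eqref{assumption-not} there. One clean way is to re-run the relevant steps of the proof of Theorem~\ref{main-applic} directly under $\sigma = -\mathrm{Id}$, checking that each use of ``$(\alpha+\beta)|_{\gh_{\gk}}\neq 0$'' can be replaced by the dichotomy $\alpha+\beta = 0$ or $\alpha+\beta\in R$ (for which $N_{\sigma(\alpha)\sigma(\beta)}$ already encodes the right information), so that equations \eqref{e1}, \eqref{e2}, \eqref{e3} and the consequences on $\mu_{\alpha}\in\mathbb{R}$ still follow. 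A second, more delicate point is justifying the equivalence ``$R_{0}$ $\sigma$-parabolic $+$ $R_{0}^{\mathrm{sym}} = \emptyset$ $\iff$ $R_{0} = R^{+}$'': the implication $\Leftarrow$ is immediate, and for $\Rightarrow$ I would use that a closed subsystem $R_{0}$ with $R_{0}\cup(-R_{0}) = R$ and $R_{0}\cap(-R_{0}) = \emptyset$ is exactly the positive roots for some ordering (standard; e.g. take any generic linear functional separating $R_{0}$ from $-R_{0}$, which exists by closedness). Everything else is routine substitution, and I would present it as such, with the Weyl-basis sign identities \eqref{rel-helg} and the structure of $\gg$ from \eqref{alg}/\eqref{cartan-real} cited as needed.
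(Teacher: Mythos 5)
Your proposal is correct and follows essentially the same route as the paper: specialize Theorem \ref{main-applic}, use relation (\ref{span}) of Lemma \ref{conditie} to force $R_{0}\cap(-R_{0})=\emptyset$, invoke the Bourbaki characterization of positive systems, and carry out the substitutions $\sigma(\alpha)=-\alpha$ and $N_{-\alpha,-\beta}=-N_{\alpha\beta}$. The one point you over-complicate is the hypothesis (\ref{assumption-not}), which you flag as the main obstacle: it is not one, because in the inner case it follows automatically from $\gh_{\gk}+\bar{\gh}_{\gk}=\gh$ (available in both directions of the equivalence, by Lemma \ref{simple-aj} respectively by condition \emph{i)}) — a nonzero $\gamma=\alpha+\beta$ vanishing on $\gh_{\gk}$ would also vanish on $\sigma(\gh_{\gk})=\bar{\gh}_{\gk}$, since $\sigma(\gamma)=-\gamma$, hence on all of $\gh$, a contradiction; so Theorem \ref{main-applic} applies verbatim and no re-running of its proof is needed. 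This is exactly how the paper disposes of the point, in one line.
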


\begin{proof}
We use Theorem \ref{main-applic}. Since $\sigma\vert_{R}=-\mathrm{Id}$,
$R_{0}\cap \sigma (R_{0})$ is symmetric and relation (\ref{span})
implies that $R_{0} \cap (-R_{0}) = \emptyset$. Since $R_{0}\cup
(-R_{0}) = R$, from  a result of Bourbaki we obtain that $R_{0}=
R^{+}$ is a positive root system. Condition (\ref{assumption-not})
is satisfied (this follows from
$\gh_{\gk} + \bar{\gh}_{\gk}= \gh$ and $\sigma \vert_{R}=
-\mathrm{Id}$). Conditions (\ref{suplimentara}) and (\ref{e3}) do
not apply $(R^{+}$ is skew-symmetric) and the intersection $\gk
\cap \bar{\gk}$ reduces to its Cartan part $\gh_{\gk}\cap
\bar{\gh}_{\gk}$.
\end{proof}

\section{Special complex geometry}\label{special-geom}

In this section we develop further applications of Theorem
\ref{main-thm}, in relation to special complex geometry.

\begin{prop}\label{ex-1} Let $(M,J, D)$ be a manifold
with an almost complex structure $J$ and a linear connection $D$.
The almost complex structure $J^{\pm}$ on $T^{*}M$, defined by $D$
and the generalized  complex structure
$$
{\mathcal J}^{\pm } := \left(\begin{tabular}{cc} $J$ & $0$\\
$0$ & $\pm J^{*}$
\end{tabular}\right)
$$
is integrable if and only if $J$ is a complex structure, $D_{X}(J)
= \pm J D_{JX} (J)$ and
\begin{equation}\label{curv-part}
(R^{D}_{X, Y} - R^{D}_{JX, JY} ) (Z) \pm ( R^{D}_{JX, Y} +
R^{D}_{X, JY}) (JZ) =0,
\end{equation}
for any $X, Y, Z\in TM.$

\end{prop}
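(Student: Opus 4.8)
The plan is to apply Theorem~\ref{main-thm} to the specific structure $\mathcal{J}^{\pm}$, after first identifying its holomorphic bundle in the $(E,\alpha)$ form of Corollary~\ref{common}. Observe that $\mathcal{J}^{\pm}$ is symmetric when we take the $+$ sign and skew-symmetric when we take the $-$ sign, so in both cases Theorem~\ref{main-thm} is applicable (with $\tau$ the conjugation in the $+$ case and the identity in the $-$ case). Assuming $J$ is already a complex structure, its holomorphic space at each point is $T^{1,0}M$, and as in Example~\ref{example}~i) the holomorphic bundle of $\mathcal{J}^{\pm}$ is $L^{\tau}(E,0)$ with $E = T^{1,0}M$ and $\alpha = 0$; here $\tau(E) = T^{0,1}M$ in the symmetric case and $\tau(E) = E = T^{1,0}M$ in the skew-symmetric case. (One should note at the outset that integrability of $J^{\mathcal{J},D}$ forces $E\subset T^{\mathbb{C}}M$ to be involutive by condition~i) of Theorem~\ref{main-thm}, which is exactly the Newlander--Nirenberg condition that $J$ be integrable; so the hypothesis ``$J$ is a complex structure'' is not an extra assumption but part of the characterization.)

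First I would unwind condition~ii) of Theorem~\ref{main-thm}, namely $D_{\Gamma(E)}\Gamma(\tau(E))\subset\Gamma(\tau(E))$. Writing this out for $E=T^{1,0}M$, in the symmetric ($+$) case it says $D$ preserves $T^{0,1}M$ under differentiation along $T^{1,0}M$, and in the skew-symmetric ($-$) case it says $D$ preserves $T^{1,0}M$ along $T^{1,0}M$; a short computation expanding $D_X(JY) = (D_XJ)(Y) + J D_X Y$ on type-decomposed vector fields should show that in both cases this is equivalent to the single real condition $D_X(J) = \pm J\, D_{JX}(J)$ for all $X$. The second half of condition~ii), $R^D|_{E\times E}(\tau(E)) = 0$, is a curvature condition on $(1,0)$-vectors; complexifying and translating back to a real identity is precisely what yields~(\ref{curv-part}). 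Condition~iii), equation~(\ref{ec}), is automatic here because $\alpha = 0$ and the connection acts trivially on the zero form, so it imposes nothing.

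The main bookkeeping obstacle — and the only real work — is the translation between the complex-linear curvature condition $R^D_{W_1,W_2}(\bar{W}_3) = 0$ for $W_i\in T^{1,0}M$ (symmetric case) or $R^D_{W_1,W_2}(W_3)=0$ (skew case) and the real tensorial identity~(\ref{curv-part}). The clean way to do this is to write real vector fields $X,Y,Z$ and their $(1,0)$-projections $X^{1,0} = \tfrac12(X - iJX)$, etc., plug into the vanishing condition, and collect the real and imaginary parts. Using that $R^D$ is real (so $\overline{R^D_{U,V}W} = R^D_{\bar U,\bar V}\bar W$) and bilinear, the vanishing of $R^D_{X^{1,0},Y^{1,0}}$ applied to $Z^{0,1}$ (or $Z^{1,0}$) expands into a combination of $R^D_{X,Y}Z$, $R^D_{JX,JY}Z$, $R^D_{JX,Y}JZ$, $R^D_{X,JY}JZ$ whose real part is exactly~(\ref{curv-part}); the imaginary part turns out to be equivalent to the same identity after applying $J$ and relabeling, so no independent condition is lost. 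I would present this expansion as the one genuine computation, then simply assemble: $J^{\pm}$ integrable $\iff$ [$E=T^{1,0}M$ involutive, i.e. $J$ integrable] and [condition~ii), i.e. $D_X(J) = \pm JD_{JX}(J)$ and~(\ref{curv-part})] and [condition~iii), vacuous], which is the claimed equivalence.
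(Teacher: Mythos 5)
Your proposal is correct and follows essentially the same route as the paper: identify the holomorphic bundle of $\mathcal J^{\pm}$ as $L^{\tau}(T^{1,0}M,0)$ (symmetric for $+$, skew-symmetric for $-$), then translate conditions i)--iii) of Theorem \ref{main-thm} into involutivity of $T^{1,0}M$ (integrability of $J$), the condition $D_X(J)=\pm JD_{JX}(J)$, the curvature identity (\ref{curv-part}), and the vacuous condition (\ref{ec}) since $\alpha=0$. Your expansion of $R^{D}_{X^{1,0},Y^{1,0}}$ on $Z^{0,1}$ (resp.\ $Z^{1,0}$) into real and imaginary parts, with the observation that the imaginary part is the real part evaluated at $JZ$, correctly supplies the one computation the paper leaves implicit.
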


\begin{proof} The generalized complex structure $\mathcal J^{+}$ is symmetric, with
holomorphic bundle $T^{1,0}M\oplus \mathrm{Ann} (T^{0,1}M)$, while
$\mathcal J^{-}$ is skew-symmetric, with holomorphic bundle
$T^{1,0}M\oplus \mathrm{Ann}(T^{1,0}M)$. From Theorem
\ref{main-thm}, if ${\mathcal J}^{\pm }$ is integrable, then
the bundle $T^{1,0}M$ is involutive, i.e $J$ is an (integrable) complex structure. Also,
$D_{\Gamma (T^{1,0}M)}\Gamma (T^{1,0}M) \subset \Gamma (T^{1,0}M)$
if and only if $D_{X}(J) = - J D_{JX} (J)$, while $D_{\Gamma
(T^{1,0}M)}\Gamma (T^{0,1}M) \subset \Gamma (T^{0,1}M)$ if and
only if $D_{X}(J) =  J D_{JX} (J)$, for any $X\in TM.$ The
condition $R^{D}\vert_{T^{1,0}M, T^{1,0}M}(\tau (T^{1,0}M))=0$
from Theorem \ref{main-thm} translates to (\ref{curv-part}).
Condition (\ref{ec}) from Theorem \ref{main-thm} is also
satisfied, because $\alpha =0$ (in both cases). Our claim follows.

\end{proof}

As already mentioned in the introduction, the first statement of
the following corollary was proved in \cite{alek} using different
methods.

\begin{cor}\label{cor-1} Consider the setting of Proposition \ref{ex-1}.

i) If $(J, D)$ is a special complex structure, i.e. $J$ is
integrable and $D$ is flat, torsion-free, such that
$$
(d^{D}J)_{X,Y}:= D_{X}(J)(Y) - D_{Y}(J)(X)=0, \quad \forall X,Y\in
TM,
$$
then $J^{+}$ is integrable.

ii) If $D=D^{g}$ is the Levi-Civita connection of an almost
Hermitian structure $(g, J)$, then $J^{+}$ is integrable if and
only if $(g, J)$ is K\"{a}hler and $J^{-}$ is integrable if and
only if $J$ is integrable and the curvature of $g$ satisfies
$$
( R^{D}_{X,Y} - R^{D}_{JX, JY} ) (Z) - (R^{D}_{JX, Y} + R^{D}_{X,
JY}) (JZ) =0,\quad \forall X, Y, Z\in TM.
$$

iii) If $D$ is the Chern connection of a Hermitian structure $(J,
g)$, then both $J^{\pm}$ are integrable.

\end{cor}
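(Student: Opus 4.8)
The plan is to derive all three parts of Corollary \ref{cor-1} directly from Proposition \ref{ex-1}, since each case simply amounts to checking the three conditions appearing there: (a) $J$ is integrable, (b) the connection condition $D_X(J)=\pm J D_{JX}(J)$, and (c) the curvature identity (\ref{curv-part}).

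For part i), I would observe that the special complex hypothesis already gives $J$ integrable and $D$ flat, so (\ref{curv-part}) holds trivially (all curvature terms vanish). It remains to verify $D_X(J) = J D_{JX}(J)$, i.e.\ the ``$+$'' case of condition (b). The key computation is to combine the special condition $d^D J=0$, which reads $D_X(J)(Y)=D_Y(J)(X)$, with the standard algebraic identity $D_X(J)\circ J = -J\circ D_X(J)$ (valid because $J^2=-\mathrm{Id}$ forces $D_X(J)$ to anticommute with $J$). Writing out $J D_{JX}(J)(Y)$, using $d^D J=0$ to swap $JX$ and $Y$, then pushing $J$ through using the anticommutation relation, should land exactly on $D_X(J)(Y)$. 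I expect this to be the one genuine (if short) calculation in the proof; everything else is bookkeeping.

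For part ii), with $D=D^g$ the Levi-Civita connection, the torsion vanishes so Proposition \ref{ex-1} applies with $T^D=0$. For $J^{+}$: the condition $D_X(J)=J D_{JX}(J)$ together with $\nabla g=0$ (and the compatibility of $g$ and $J$) is equivalent to $\nabla J=0$, i.e.\ to $(g,J)$ being K\"ahler; here I would cite or reprove the standard fact that $\nabla_X J = J\nabla_{JX}J$ forces $\nabla J=0$ (one shows the $(2,0)+(0,2)$ and $(1,1)$ parts of $\nabla J$ vanish separately), and conversely $\nabla J=0$ clearly makes (\ref{curv-part}) hold as well since then $R^D$ commutes with $J$. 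For $J^{-}$: now condition (b) is $D_X(J)=-J D_{JX}(J)$, which by the same anticommutation trick is automatic for \emph{any} metric connection compatible with $J$ up to its $(1,1)$-part --- more carefully, for the Levi-Civita connection one checks it reduces to integrability of $J$ (the Nijenhuis tensor governs exactly the obstruction). The curvature condition (\ref{curv-part}) with the ``$-$'' sign is then precisely the displayed identity. So $J^{-}$ integrable $\iff$ $J$ integrable and that curvature identity holds.

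For part iii), $D$ is the Chern connection of the Hermitian structure $(J,g)$: by definition $DJ=0$, so condition (b) holds for both signs trivially, and since $DJ=0$ the curvature $R^D$ commutes with $J$, whence both curvature identities (\ref{curv-part}) (with either sign) hold automatically. Thus both $J^{\pm}$ are integrable. The main obstacle overall is really just part ii)'s careful handling of which algebraic identity is equivalent to which geometric condition (K\"ahler vs.\ merely integrable vs.\ the curvature constraint); I would isolate the purely algebraic lemma ``if $\nabla$ is a connection with $\nabla g = 0$ and $\nabla_X J = J \nabla_{JX} J$ for all $X$, then $\nabla J = 0$'' and prove it by splitting $\nabla_X J$ into types, then feed it into Proposition \ref{ex-1}.
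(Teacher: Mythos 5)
Your overall strategy---verifying the three conditions of Proposition \ref{ex-1} case by case---is exactly the paper's, and part i), part iii) in outline, and the $J^{-}$ half of part ii) go through as you describe. However, the ``purely algebraic lemma'' you isolate for the $J^{+}$ half of ii) is false as stated: the hypotheses $\nabla g=0$, $g(J\cdot ,J\cdot )=g(\cdot ,\cdot )$ and $\nabla_{X}J=J\nabla_{JX}J$ do \emph{not} force $\nabla J=0$. A strictly nearly K\"ahler manifold such as $S^{6}$ is a counterexample: from $(\nabla_{X}J)(Y)=-(\nabla_{Y}J)(X)$ and the anticommutation of $\nabla_{X}J$ with $J$ one computes $\nabla_{JX}J=-J\circ \nabla_{X}J$, hence $J\nabla_{JX}J=\nabla_{X}J$, so your hypothesis holds, while $\nabla J\neq 0$ and $J$ is not even integrable. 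Your proposed proof by ``splitting $\nabla J$ into types'' would therefore fail: the condition $\nabla_{X}J=J\nabla_{JX}J$ annihilates only the component of $\nabla J$ complementary to the Nijenhuis ($W_{1}\oplus W_{2}$) part and says nothing about that part itself. The correct argument---the one the paper uses---must also invoke the integrability of $J$, which Proposition \ref{ex-1} hands you for free as one of its three conditions: by the Gray--Hervella/Gauduchon fact you already use for $J^{-}$, integrability is equivalent to $D^{g}_{X}(J)=-JD^{g}_{JX}(J)$; adding this to the ``$+$'' condition $D^{g}_{X}(J)=+JD^{g}_{JX}(J)$ gives $2JD^{g}_{JX}(J)=0$, hence $D^{g}J=0$ and $(g,J)$ is K\"ahler. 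The missing ingredient is thus the combination of the two sign conditions, not a type decomposition.

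A smaller imprecision: in the converse direction of ii) and in iii) you dismiss the curvature condition (\ref{curv-part}) by saying that $DJ=0$ makes $R^{D}$ commute with $J$. Commutation $[R^{D}_{X,Y},J]=0$ by itself controls neither $R^{D}_{X,Y}-R^{D}_{JX,JY}$ nor $R^{D}_{JX,Y}+R^{D}_{X,JY}$; what is actually needed is the $J$-invariance $R^{D}_{JX,JY}=R^{D}_{X,Y}$ (from which the second bracket also vanishes, since $R^{D}_{X,JY}=R^{D}_{JX,J(JY)}=-R^{D}_{JX,Y}$). For the Chern connection this is the statement that the curvature is of type $(1,1)$, which is precisely what the paper cites; for the Levi-Civita connection of a K\"ahler metric it follows from $[R_{X,Y},J]=0$ combined with the pair symmetry of the Riemann tensor. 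Both facts are standard, but neither is a consequence of commutation alone.
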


\begin{proof} The claims follow from Proposition \ref{ex-1}.
For {\it i)}, we remark that the special complex condition
$d^{D}J=0$ implies $D_{X}(J) = J D_{JX} (J)$ for any $X\in TM.$
For {\it ii)} we use that $D^{g}_{X}(J) = - JD^{g}_{JX}(J)$, for
any $X\in TM$, if and only if $J$ is integrable (see \cite{GH} or
Proposition 1 of \cite{gaud}). This proves the statement for
$J^{-}.$ The statement for $J^{+}$ follows as well: if $J$ is
integrable and $D^{g}_{X}(J) =  JD^{g}_{JX}(J)$, then $D^{g}J=0$
and $(g, J)$ is K\"{a}hler. For {\it iii)} we use that the Chern
connection is  Hermitian with curvature of type $(1,1).$
\end{proof}

The following lemma is a mild improvement of Lemma 6 of
\cite{alek}.

\begin{lem}
Let $(M,\omega , D)$ be a manifold with a non-degenerate $2$-form
$\omega$ and a linear connection $D$. The almost complex structure
on $T^{*}M$ defined by $D$ and the (skew-symmetric) generalized
complex structure
$$
{\mathcal J}^{{\omega}}= \left(\begin{tabular}{cc}
$0$ & $\omega^{-1}$\\
$-\omega$ & $0$\\
\end{tabular}\right)
$$
is integrable if and only if $D$ is flat and, for any $X,Y, Z\in
{\mathcal X} (M)$,
\begin{align*}
(d\omega ) (X,Y, Z)  - (D_{Z}\omega ) (X,Y) - \omega
(T^{D}_{Z}X,Y) -\omega (X,T^{D}_{Z}Y)=0.
\end{align*}

\end{lem}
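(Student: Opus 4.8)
The plan is to specialize Theorem \ref{main-thm} to the skew-symmetric generalized complex structure $\mathcal J^{\omega}$, for which the holomorphic bundle is $L(E,\alpha)$ with $E = T^{\mathbb C}M$ (the full complexified tangent bundle) and $\tau = \mathrm{Id}$. Indeed, $\mathcal J^{\omega}$ is the generalized complex structure of symplectic type; by Example \ref{example} ii) (read in the skew-symmetric setting, cf.\ Gualtieri) its $i$-eigenbundle is $L(T^{\mathbb C}M, -i\,\omega^{\mathbb C})$, so $E = T^{\mathbb C}M$ and $\alpha = -i\,\omega^{\mathbb C}$ up to the normalization conventions in play. First I would record this identification and note that with $E = T^{\mathbb C}M$, condition {\it i)} of Theorem \ref{main-thm} (involutivity of $E$) is vacuous, and condition {\it ii)}, namely $D_{\Gamma(E)}\Gamma(\tau(E))\subset\Gamma(\tau(E))$ and $R^{D}|_{E\times E}(\tau(E)) = 0$, reduces to the single requirement $R^{D} = 0$, i.e.\ $D$ is flat (the invariance condition being automatic since $\tau(E) = T^{\mathbb C}M$).

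Next I would treat condition {\it iii)}, equation (\ref{ec}), which with $\tau = \mathrm{Id}$ and $\alpha$ proportional to $\omega^{\mathbb C}$ reads, for all $X, Y, Z \in \mathcal X(M)$ (it suffices to check on real vector fields and extend by complex bilinearity),
\[
(D_X\omega)(Y,Z) - (D_Y\omega)(X,Z) + \omega(T^D_X Y, Z) = 0.
\]
Here I would invoke the general identity (\ref{simple-ec}) from the proof of Corollary \ref{remarca}, applied with $\beta = \omega$: it rewrites the left-hand side as
\[
(d\omega)(X,Y,Z) - \bigl((D_Z\omega)(X,Y) + \omega(T^D_Z X, Y) + \omega(X, T^D_Z Y)\bigr).
\]
Setting this equal to zero for all $X,Y,Z$ is precisely the displayed equation in the statement of the lemma. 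So condition {\it iii)} of Theorem \ref{main-thm} is equivalent to the asserted differential equation on $\omega$, $D$, $T^D$.

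Combining the two reductions, $J^{\mathcal J^{\omega}, D}$ is integrable if and only if $D$ is flat and the displayed identity holds, which is exactly the claim. I do not anticipate a serious obstacle here: the only genuine point requiring care is the bookkeeping of the factor relating $\alpha$ and $\omega^{\mathbb C}$ (an overall nonzero scalar $-i$), which does not affect the vanishing conditions, and the verification that checking (\ref{ec}) on arbitrary smooth sections of $E = T^{\mathbb C}M$ is equivalent to checking the real identity on $\mathcal X(M)$ — both routine. If anything is mildly delicate, it is making sure the non-degeneracy hypothesis on $\omega$ is what guarantees $\mathcal J^{\omega}$ is a bona fide generalized complex structure in the first place (so that $E = T^{\mathbb C}M$ and the data $(E,\alpha)$ genuinely satisfy the hypotheses of Corollary \ref{common}), but this is immediate from Example \ref{example} ii) and its skew-symmetric analogue.
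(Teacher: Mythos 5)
Your proposal is correct and follows the same route as the paper: identify the holomorphic bundle of $\mathcal J^{\omega}$ as $L(T^{\mathbb{C}}M, i\omega^{\mathbb{C}})$ (the sign/normalization of the scalar factor being immaterial), apply Theorem \ref{main-thm} with $E=T^{\mathbb{C}}M$ so that involutivity is vacuous and condition \emph{ii)} reduces to flatness of $D$, and convert condition \emph{iii)} into the displayed identity via (\ref{simple-ec}). The paper's own proof is a one-line version of exactly this argument.
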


\begin{proof} The holomorphic bundle of ${\mathcal J}^{\omega}$ is
$L(T^{\mathbb{C}}M, i\omega^{\mathbb{C}} )$ and the claim follows
from Theorem \ref{main-thm} and relation (\ref{simple-ec}).

\end{proof}

{\bf Author's address:} Permanent: Institute of Mathematics 'Simion Stoilow' of the
Romanian Academy, Research Unit 4, Calea Grivitei nr. 21,
Bucharest, Romania; liana.david@imar.ro\\

Present: Lehrstuhl f\"{u}r Mathematik VI, Institut f\"{u}r Mathematik,
Universit\"{a}t  Mannheim, 
A5, 6, 68131, Mannheim, Germany;
ldavid@mail.uni-mannheim.de


\begin{thebibliography}{99}


\bibitem{alek} D. Alekseevsky, V. Cortes, Devchand: {\it Special complex manifolds}, J. Geom. Physics 42 (1) (2002),
p. 85-105.

\bibitem{liana} D. Alekseevsky, L. David: {\it Invariant
generalized complex structures on Lie groups}, Proc. London Math.
Soc. vol. 105 (4) (2012), p. 703-720.

\bibitem{barb}  L. C. de Andres, M. L. Barberis, I. Dotti, M.
Fernandez: {\it Hermitian structures on cotangent bundles of four
dimensional solvable Lie groups},  Osaka J. Math. 44 no. 4 (2007),
p. 765-793.

\bibitem{ijm} V. Apostolov, P. Gauduchon: {\it The Riemannian
Goldberg-Sachs Theorem}, Int. J. Math. vol. 8 (4) (1997), p.
421-439.


\bibitem{bourbaki} N. Bourbaki: {\em Lie groups and Lie
algebras, Chapitres 4-6} (Springer 2002).


\bibitem{gaud} P. Gauduchon: {\it Hermitian Connections and Dirac
Operators}, Boll. U.M.I. (7) 11-B, Suppl. fasc. 2 (1997), p.
257-288.

\bibitem{GH} A. Gray, L. Hervella: {\it The sixteen classes of
almost hermitian manifolds and their linear invariants}, Ann. Mat.
Pura Appl., IV, 123 (1980), p. 35-58


\bibitem{gualtieri} M. Gualtieri: {\it Generalized Complex Geometry},
Ph.D thesis, University of Oxford, 2003; arxiv:mathDG/0401221.

\bibitem{helg} S. Helgason: {\it Differential Geometry and
Symmetric Spaces}, AMS Chelsea Publishing, 2001.

\bibitem{hitchin} N. G. Hitchin: {\it Generalized Calabi-Yau manifolds},
Q. J. Math. 54 no. 3 (2003), p. 281-308.

\bibitem{knapp} A. W. Knapp: Lie Groups Beyond an
Introduction, Progress in Mathematics, vol. 140, (Birkhauser
1996).



\bibitem{m} B. Milburn: {\it Generalized complex and
Dirac structures on homogeneous spaces},  Diff. Geom. Applic. 29
(5) (2011), p. 615-641.


\bibitem{nan} A. Nannicini: {\it Almost complex structures on
cotangent bundles and generalized geometry}, J. Geom. Physics 60
(2010), p. 1781-1791.

\bibitem{vinberg} A. L. Onishchik, E. B. Vinberg (Eds.): {\it Lie Groups
and Lie Algebras III}, Encyclopaedia of Mathematical Sciences,
vol. 41, Springer 1994


\bibitem{vaisman} I. Vaisman: {\it Some cuvature properties of
complex surfaces}, Ann. Mat. Pura Appl. 32 (1982), p. 1-18.


\end{thebibliography}
\end{document}